\documentclass[dvipdfmx]{article}

\usepackage{amssymb,amsmath,amsthm}
\usepackage{hyperref}
\usepackage{authblk}
\usepackage{tikz}
\usepackage{enumitem}
\usepackage{ascmac}

\theoremstyle{definition}
\newtheorem{theorem}{Theorem}[section]
\newtheorem{remark}[theorem]{Remark}
\newtheorem{lemma}[theorem]{Lemma}
\newtheorem{definition}[theorem]{Definition}
\newtheorem{corollary}[theorem]{Corollary}

\newtheorem{proposition}[theorem]{Proposition}

\newcommand{\LPP}{\mathsf{LPP}}
\newcommand{\RCAo}{\mathsf{RCA}_0}
\newcommand{\ACAo}{\mathsf{ACA}_0}
\newcommand{\HJ}{\mathrm{HJ}}
\newcommand{\hyp}{\mathchar`-}
\newcommand{\PCAo}{\Pi^1_1\hyp\mathsf{CA}_0}
\newcommand{\ATRo}{\mathsf{ATR}_0}
\newcommand{\TI}{\mathsf{TI}}
\newcommand{\TLPP}{\mathsf{TLPP}}
\newcommand{\ALPP}{\mathsf{ALPP}}
\newcommand{\RFN}{\mathsf{RFN}}
\newcommand{\DC}{\mathsf{DC}}
\newcommand{\M}{\mathcal{M}}
\newcommand{\Ef}{\mathrm{R}}
\newcommand{\N}{\mathbb{N}}
\newcommand{\LP}{\mathrm{LP}}
\DeclareMathOperator{\TJ}{\mathrm{TJ}}
\newcommand{\T}{\mathrm{T}}
\newcommand{\forces}{\Vdash}
\newcommand{\WO}{\mathrm{WO}}
\newcommand{\NN}{\mathcal{N}}
\newcommand{\ACA}{\mathsf{ACA}}
\newcommand{\IND}{\mathsf{IND}}
\newcommand{\KB}{\mathrm{KB}}
\newcommand{\GN}[1]{\ulcorner #1 \urcorner}

\renewcommand{\P}{\mathsf{P}}
\newcommand{\Q}{\mathsf{Q}}
\newcommand{\W}{\mathrm{W}}
\newcommand{\dom}{\mathrm{dom}}

\title{Relative leftmost path principles and omega-model reflections of transfinite inductions}

\author[1]{Yudai Suzuki}
\affil[1]{yudai.suzuki.research@gmail.com}
\date{\today}
\begin{document}
\maketitle

\begin{abstract}
In this paper, we give characterizations of Towsner's relative leftmost path principles in terms of omega-model reflections of transfinite inductions. In particular, we show that the omega-model reflection of $\Pi^1_{n+1}$ transfinite induction is equivalent to the $\Sigma^0_n$ relative leftmost path principle over $\RCAo$ for $n > 1$. As a consequence, we have that $\Sigma^0_{n+1}\LPP$  is strictly stronger than $\Sigma^0_{n}\LPP$.
\end{abstract}

\section{Introduction}
In recent studies of reverse mathematics, not only the \textit{big five} systems, but also other systems are focused on.
For example, it is known that the strength of Kruskal's tree theorem is precisely between $\ATRo$ and $\PCAo$, and the best-known upper bound for Menger's theorem in graph theory is also between $\ATRo$ and $\PCAo$.

For studying intermediate $\Pi^1_2$ statements between $\ATRo$ and $\PCAo$, 
Towsner introduced new $\Pi^1_2$ principles which we call pseudo leftmost path principles\footnote{Originally they were called relative leftmost path principles.} stating that any ill-founded tree has a \textit{pseudo} leftmost path \cite{Towsner_TLPP}. Here, a pseudo leftmost path is a path which may not be the actual leftmost path, but it behaves like a leftmost path in a restricted range. 
In \cite{suzuki_yokoyama_pi12}, the author and Yokoyama generalized Towsner's work and gave some characterizations of $\{\sigma \in \Pi^1_2: \PCAo \vdash \sigma\}$. They introduced \textit{pseudo} hyperjumps and compared them with pseudo leftmost path principles. 

The works in \cite{suzuki_yokoyama_pi12} and  \cite{Freund} suggest that 
a finer analysis of the relationship between the $n$-th and $n+1$-th pseudo hyperjumps is also interesting.
Since Towsner's $\Sigma^0_n$ leftmost path principles approximate the single pseudo hyperjump, 
it seems helpful to study them for a finer analysis between pseudo hyperjumps.

In this paper, we give new characterizations of pseudo leftmost path principles.
We prove the following equivalences over $\RCAo$:
\begin{itemize}
  \item the transfinite leftmost path principle and the single pseudo hyperjump with the base $\Pi^1_1\hyp\TI$,
  \item the arithmetic leftmost path principle, the single pseudo hyperjump and the omega-model reflection of $\Pi^1_{\infty}\hyp\TI$,
  \item the $n$-th pseudo hyperjump with the base $\Pi^1_{\infty}\hyp\TI$ and  the $n+1$-th pseudo hyperjump,
  \item the $\Sigma^0_n$ leftmost path principle and the omega-model reflection of $\Pi^1_{n+1}\hyp\TI$ for $n > 1$.
\end{itemize}
We note that these results help to reveal the proof-theoretic strength of pseudo leftmost path principles.
We also note that the second clause is independently proved by Freund \cite{Freund}.

\subsection*{Structure of this paper}
In Section 2, we recall the notion of coded $\omega$-models and the $\omega$-model reflections. Then, we introduce the notion of realizers for coded $\omega$-models, which is used to prove results in Section 3.
In Section 3, we prove the main results. 
In Section 4, we rephrase the results in Section 3 from the point of view of Weihrauch reduction.

\subsection*{Acknowledgements}
The author would like to thank Keita Yokoyama, Anton Freund and Koshiro Ichikawa for their useful comments and discussions for this paper.

\section{Coded omega-models}
In this section, we see some properties of coded $\omega$-models.
For the basic definitions, see also Simpson's textbook \cite{Simpson}.

\begin{definition}[$\RCAo$]
  For a set $X$, we define the $i$-th segment $X_i$ by
  $X_i = \{j : \langle i,j \rangle \in X\}$ where $\langle i,j \rangle$ denotes the standard pairing function from $\N^2$ to $\N$. Then, a set $X$ can be regarded as a sequence $\langle X_i \rangle_{i \in \N}$.
  In this sense, we say a set $A$ is contained  in $X$ (written $A \in X$) if $\exists i(A = X_i)$ holds.
\end{definition}

When we identify a set $\M$ with the structure $(\N,\langle \M_i\rangle_{i \in \N})$, we call $\M$ a coded $\omega$-model. If it is clear from the context, we sometimes omit the word \textit{coded}, and simply say $\M$ is an $\omega$-model.
One of the important notions related to coded $\omega$-models is the principle of $\omega$-model reflection. We introduce two kinds of $\omega$-model reflections.
\begin{definition}
  Let $T$ be a recursively axiomatized theory.
  We define the $\omega$-model reflection of $T$ (written $\RFN(T)$) as the assertion that 
  any set $X$ is contained in an $\omega$-model of $T + \ACAo$.
\end{definition}
It is easy to see that $\RFN(T)$ implies the consistency of $\ACAo + T$ over $\RCAo$.
Therefore, sometimes $\RFN(T)$ is used for separating two theories.

\begin{proposition}
Let $T$ and $T'$ be recursive consistent extensions of $\ACAo$ such that 
$T$ proves $\RFN(T')$. Then, $T'$ does not prove $T$. If $T$ proves $T'$ in addition, then $T$ is strictly stronger than $T'$.
\end{proposition}
\begin{proof}
If $T'$ proved $T$, then $T'$ would prove the consistency of $T'$, a contradiction. 
\end{proof}

\begin{definition}
  Let $\Gamma$ be a class of formulas. We define 
  the $\Gamma$-reflection schema (written $\Gamma\hyp\RFN$) as the collection of 
  \begin{align*}
    \forall X(\varphi(X) \to \exists \M \text{: a coded $\omega$-model } (\M \models \ACAo +  \varphi(X)))
  \end{align*}
  for all $\varphi \in \Gamma$.
\end{definition}
The following characterization of $\Pi^1_{n}\hyp\RFN$ is well-known.
\begin{theorem}[$\ACAo$]
\cite{Jager_and_Strahm}
  Let $n \in \omega$ such that $n >0$. 
  Then, $\Pi^1_{n+1}\hyp\RFN$ is equivalent to $\Pi^1_{n}\hyp\TI$.
\end{theorem}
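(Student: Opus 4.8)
The plan is to establish the two implications separately over $\ACAo$.

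\emph{From $\Pi^1_{n+1}\hyp\RFN$ to $\Pi^1_n\hyp\TI$.} I would fix a well-order $\prec$ (coded by a set), a $\Pi^1_n$ formula $\theta$ (set parameters allowed), and the progressiveness hypothesis $\forall a(\forall b\prec a\,\theta(b)\to\theta(a))$, and argue by contradiction from $\exists a\in\dom(\prec)\,\neg\theta(a)$. Then
\[
  \Psi\ :\equiv\ \WO(\prec)\ \wedge\ \forall a\bigl(\forall b\prec a\,\theta(b)\to\theta(a)\bigr)\ \wedge\ \exists a\in\dom(\prec)\,\neg\theta(a)
\]
is a true $\Pi^1_{n+1}$ formula: the first conjunct is $\Pi^1_1$, the third is $\Sigma^1_n$, and the middle one has the shape $\forall a(\sigma(a)\vee\theta(a))$ with $\sigma\in\Sigma^1_n$, so all three lie in $\Pi^1_{n+1}$, which is closed under conjunction. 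Applying $\Pi^1_{n+1}\hyp\RFN$ gives a coded $\omega$-model $\M$ containing $\prec$ and every parameter of $\theta$ with $\M\models\ACAo+\Psi$. Since $\theta$ is a fixed formula, the relation ``$\M\models\neg\theta(a)$'' is arithmetic in $\M$ — relativizing each set quantifier of $\theta$ to $\M$ turns it into a number quantifier over the indices of the coded sequence $\M$ — so $Z:=\{a\in\dom(\prec):\M\models\neg\theta(a)\}$ exists by $\ACAo$, and $\M\models\Psi$ forces $Z\ne\emptyset$. As $\prec$ is a genuine well-order and $Z$ is a set, $Z$ has a $\prec$-minimal element $a_1$; minimality gives $\M\models\forall b\prec a_1\,\theta(b)$, whence $\M\models\theta(a_1)$ by the progressiveness holding in $\M$, contradicting $a_1\in Z$. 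Hence $\forall a\,\theta(a)$.

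\emph{From $\Pi^1_n\hyp\TI$ to $\Pi^1_{n+1}\hyp\RFN$.} Here I would fix a true $\Pi^1_{n+1}$ formula $\varphi(X)\equiv\forall Y\,\psi(X,Y)$ with $\psi\in\Sigma^1_n$ and build a coded $\omega$-model $\M\ni X$ of $\ACAo+\varphi(X)$ by a transfinite Skolem-hull construction: let $\M_0$ be the arithmetic closure of $X$; at successor stages close under arithmetic comprehension and, for each $Y$ already present, adjoin a set witnessing $\psi(X,Y)$ in $V$ (one exists since $\varphi(X)$ is true), feeding in witnesses for the nested quantifier layers of $\psi$ one level at a time; take unions at limits; and iterate along a well-order long enough for the process to stabilize. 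Then $X\in\M$ and $\M\models\ACAo$ are immediate, and the content is to show $\forall Y\in\M\,\psi^{\M}(X,Y)$, i.e.\ $\M\models\varphi(X)$. This is exactly what $\Pi^1_n\hyp\TI$ supplies: because the witnesses are fed in layer by layer, the assertion ``the stage-$\alpha$ hull is $\Sigma^1_n$-correct for the parameters available by stage $\alpha$'' leaves, at each stage, only a $\Pi^1_{n-1}$-correctness obligation and so can be kept at $\Pi^1_n$ complexity; it is progressive along the construction order, so $\Pi^1_n\hyp\TI$ yields it at every stage, hence for $\M$. (Equivalently, in proof-theoretic packaging: $\M$ can be read off an infinite branch of the canonical $\omega$-logic search tree for a contradiction from $\ACAo+\mathrm{Diag}(X)+\varphi(X)$, the tree being ill-founded because $\Pi^1_n\hyp\TI$ proves the soundness — after a cut-rank reduction — of the bounded $\omega$-derivations deciding the relevant $\Pi^1_{n+1}$ sentences.)

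The first implication is essentially bookkeeping, resting on the single observation that satisfaction of a fixed $\Pi^1_n$ formula in a coded $\omega$-model is arithmetic in the model. The step I expect to be the real obstacle is the second: arranging the iterated hull — equivalently, the $\omega$-logic search — so that the correctness obligation at each stage stays at the $\Pi^1_n$ level rather than climbing to $\Pi^1_{n+1}$, which is what makes $\Pi^1_n\hyp\TI$ (and not a stronger induction) exactly sufficient. This is where the stratification of $\psi\in\Sigma^1_n$ into its $\exists/\forall$ layers is used essentially, and where the hypothesis $n>0$ enters: for $n=0$ the direction fails, since $\Pi^1_0\hyp\TI$ already holds in $\ACAo$ while $\Pi^1_1\hyp\RFN$ does not.
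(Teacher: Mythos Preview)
The paper does not prove this theorem; it is stated with a citation to J\"ager--Strahm and no argument is given. So there is no paper proof to compare against, and your proposal must be judged on its own.

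Your first direction ($\Pi^1_{n+1}\hyp\RFN \Rightarrow \Pi^1_n\hyp\TI$) is correct and is the standard argument. The complexity bookkeeping is right: progressiveness of a $\Pi^1_n$ formula is $\Pi^1_{n+1}$, the failure hypothesis is $\Sigma^1_n\subseteq\Pi^1_{n+1}$, and $\WO(\prec)$ is $\Pi^1_1$, so the conjunction $\Psi$ is $\Pi^1_{n+1}$ and reflection applies. The key observation --- that $\{a:\M\models\lnot\theta(a)\}$ is arithmetic in the code of $\M$ because $\theta$ is a \emph{fixed} formula, so the relativized set quantifiers become number quantifiers over indices --- is exactly what drives the contradiction, and you state it cleanly.

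Your second direction is a plan, not a proof. Both packagings you mention (iterated Skolem hulls with layer-by-layer witness insertion; the $\omega$-logic search tree with cut reduction) are genuine routes to the result, and the second is essentially what J\"ager and Strahm do. But the phrase ``iterate along a well-order long enough for the process to stabilize'' hides the real content: one does not choose a well-order in advance. In the proof-theoretic version the well-order is the Kleene--Brouwer order of the search tree, and the work is showing that \emph{if} this tree were well-founded then a soundness argument of complexity $\Pi^1_n$ (this is where reducing the cut rank to $\Pi^1_n$ is essential) would prove the negation of the true $\Pi^1_{n+1}$ sentence; $\Pi^1_n\hyp\TI$ along that order then gives the contradiction. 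In the model-theoretic version the difficulty is exactly what you flag at the end: producing an inductive statement that is simultaneously progressive along the construction and of complexity at most $\Pi^1_n$ is the substance of the proof, and ``feeding in witnesses one layer at a time'' is a description of the goal rather than an argument that it can be met. As an outline your proposal is sound; as a proof, the second half still needs those details written out.
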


In the usual definition in the context of formal logics, the truth of a sentence $\sigma$ over a structure $\M$ is defined by Tarski's truth definition. However,  it is practically identified with the truth of the relativization $\sigma^{\M}$.
We note that when working in a weak system like $\RCAo$, we need to distinguish between these two kinds of truth: $\sigma^{\M}$ and $\M \models \sigma$.
For a detailed definition of $\M \models \sigma$, see \cite{Simpson}.

\begin{definition}[$\RCAo$]
  Let $\M = \langle \M_i \rangle_{i \in \N}$ be a sequence of sets. 
  We say $\M$ is a jump ideal if 
  \begin{align*}
    \forall i (\TJ(\M_i) \in \M) \land \forall i,j(\M_i \oplus \M_j \in \M) \land \forall i \forall X \leq_{\T} \M_i (X \in \M).
  \end{align*}
  Here, $\TJ,\oplus$ and $\leq_{\T}$ denote the Turing jump, the Turing sum and the Turing reduction respectively.
\end{definition}
For $S \subseteq \mathcal{P}(\omega)$, $S$ is a jump ideal if and only if $S$ is an $\omega$-model of $\ACAo$.
This fact is provable in $\ACAo$.
However, a jump ideal may not be a model of $\ACAo$ in $\RCAo$ because 
$\RCAo$ does not ensure the existence of a (partial) truth valuation for a coded $\omega$-model.
We introduce realizers for jump ideals that make a jump ideal an $\omega$-model of $\ACAo$ within $\RCAo$.
\begin{definition}[$\RCAo$]
  Let $\M$ be a jump ideal, $f_{\TJ}$ be a function from $\N$ to $\N$ and $f_{\oplus}$ be a function from $\N^2$ to $\N$.
  We say the pair $(f_{\TJ},f_{\oplus})$ realizes that $\M$ is a jump ideal (written $(f_{\TJ},f_{\oplus}) \forces_{\M}  \ACAo$) if 
  \begin{align*}
    \forall i (\M_{f_{\TJ}(i)} = \TJ(\M_i))  \land 
    \forall i,j(\M_{f_{\oplus}(i,j)} = \M_i \oplus \M_j).
  \end{align*}
  We say a jump ideal $\M$ is realizable if there is a realizer $(f_{\TJ},f_{\oplus})$ for it.
\end{definition}
\begin{remark}
  We note that realizable jump ideals are essentially the same as effective $\omega$-models of $\ACAo$ introduced in \cite{suzuki_yokoyama_fp} in the context of Weihrauch degrees.
\end{remark}

\begin{definition}
  For each $n \in \omega$ such that $n > 0$, 
  let $\pi^0_n$ denote a $\Pi^0_n$-unversal formula.
  That is, for any $\Pi^0_n$ formula $\theta$ whose G\"odel number is  $\GN{\theta}$, the following holds.
  \begin{align*}
    \RCAo \vdash \forall\vec{x},\vec{X} (\theta(\vec{x},\vec{X}) \leftrightarrow \pi^0_n(\GN{\theta},\vec{x},\vec{X}).
  \end{align*}
\end{definition}
  Let $n \in \omega$ such that $n > 0$. Then, $\RCAo$ proves that any realizable jump ideal has the $\Pi^0_n$ truth valuation. More precisely,
  the following holds.
\begin{proposition}
  Let $n \in \omega$ such that $n > 0$. Then, $\RCAo$ proves the following.
  For any  jump ideal $\M$ realized by $f_{\TJ}$ and $f_{\oplus}$,
  there is a $\{\top,\bot\}$-valued function $f\leq_{\T} \M \oplus f_{\TJ} \oplus f_{\oplus }$ such that 
  \begin{align*}
    f(e,x,i,j) = \top \leftrightarrow \pi^0_n(e,x,\M_i,\M_j).
  \end{align*}
\end{proposition}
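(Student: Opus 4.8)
The plan is to separate the recursion-theoretic core of the statement — that $\Pi^0_n$ truth is uniformly decidable from the $n$-th iterated Turing jump of the parameters — from the model-theoretic bookkeeping, which only has to point out \emph{where} inside $\M$ the relevant iterated jumps are located, using the realizers $f_{\TJ},f_{\oplus}$.

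First I would prove, by external induction on the standard number $n\ge 1$, that $\RCAo$ proves: there is an index $\hat e_n$ such that $\Phi^{W}_{\hat e_n}$ is a total $\{\top,\bot\}$-valued function for every oracle $W$, and whenever $W$ satisfies the ($\RCAo$-expressible) condition ``$W$ is the $n$-th Turing jump of $C_0\oplus C_1$'' one has $\pi^0_n(e,x,C_0,C_1)\leftrightarrow\Phi^{W}_{\hat e_n}(e,x)=\top$ for all $e,x$. For $n=1$ this is immediate from the defining property of the Turing jump: $\lnot\pi^0_1(e,x,C_0,C_1)$ is, uniformly in $e,x$, a $\Sigma^0_1(C_0\oplus C_1)$ statement and hence many--one reducible to $\TJ(C_0\oplus C_1)$ via a primitive recursive function, so a single oracle query to $W=\TJ(C_0\oplus C_1)$ decides $\pi^0_1$ (then negate). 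For the step from $n$ to $n+1$, fix a primitive recursive $s$ with $\pi^0_{n+1}(e,x,C_0,C_1)\leftrightarrow\forall y\,\lnot\pi^0_n(s(e),\langle x,y\rangle,C_0,C_1)$; by the induction hypothesis and totality of $\Phi_{\hat e_n}$ the right-hand side is equivalent to $\forall y\,(\Phi^{W_n}_{\hat e_n}(s(e),\langle x,y\rangle)=\bot)$, where $W_n$ is the $n$-th jump of $C_0\oplus C_1$, a $\Pi^0_1(W_n)$ predicate of $(e,x)$ and hence decidable by one query to $\TJ(W_n)=W_{n+1}$, the $(n{+}1)$-st jump; collecting the index translations gives $\hat e_{n+1}$. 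Everything used here — the enumeration theorem for relative computability, effective index manipulations, and the basic theory of $\TJ$ — is available in $\RCAo$.

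Next, since $(f_{\TJ},f_{\oplus})\forces_{\M}\ACAo$, we have $\M_{f_{\oplus}(i,j)}=\M_i\oplus\M_j$ and $\M_{f_{\TJ}(k)}=\TJ(\M_k)$ for all $i,j,k$. For the fixed standard $n$, composing $f_{\TJ}$ with itself $n$ times gives a function $F\le_{\T}f_{\TJ}$, and an $n$-step external induction inside $\RCAo$ shows that $\M_{F(k)}$ satisfies ``being the $n$-th jump of $\M_k$'' for all $k$; set $G(i,j)=F(f_{\oplus}(i,j))$, so $G\le_{\T}f_{\TJ}\oplus f_{\oplus}$ and $\M_{G(i,j)}$ is the $n$-th jump of $\M_i\oplus\M_j$. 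Now define $f(e,x,i,j)=\Phi^{\M_{G(i,j)}}_{\hat e_n}(e,x)$. Evaluating $f$ needs only $G$ (computable from $f_{\TJ}\oplus f_{\oplus}$), the column $\M_{G(i,j)}$ of $\M$ used as oracle, and the fixed index $\hat e_n$; hence $f\le_{\T}\M\oplus f_{\TJ}\oplus f_{\oplus}$, $f$ is total and $\{\top,\bot\}$-valued by the first step, and $f$ exists as a set by recursive comprehension, i.e.\ in $\RCAo$. Finally, applying the first step with $C_0=\M_i$, $C_1=\M_j$, $W=\M_{G(i,j)}$ yields $f(e,x,i,j)=\top\leftrightarrow\pi^0_n(e,x,\M_i,\M_j)$, as required.

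The only real delicacy is keeping the whole argument inside $\RCAo$ rather than $\ACAo$: we never form an iterated jump by comprehension — the $n$-th jump of $\M_i\oplus\M_j$ is already present as a column of $\M$, and the equations describing it are arithmetical \emph{statements} about columns of $\M$, not comprehension instances — while the uniform decidability lemma of the first step is, for each fixed standard $n$, a single $\RCAo$-provable instance of classical relativized computability theory. The external induction on $n$ is harmless precisely because $n\in\omega$ is a genuine metatheoretic natural number, so each $n$ yields its own $\RCAo$-proof; there is no internal induction on a possibly nonstandard parameter to worry about.
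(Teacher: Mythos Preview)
Your proof is correct and follows essentially the same strategy as the paper: isolate the fact that $\pi^0_n$ truth is uniformly decidable from $\TJ^n(C_0\oplus C_1)$, then use the realizers $f_{\TJ},f_{\oplus}$ to locate $\TJ^n(\M_i\oplus\M_j)$ as the column $\M_{(f_{\TJ})^n(f_{\oplus}(i,j))}$ and read off $f$ by $\Delta^0_1$ comprehension. The only cosmetic difference is that the paper states the recursion-theoretic core as a single many-one reduction lemma (a primitive recursive $p$ with $\pi^0_n(e,x,X,Y)\leftrightarrow p(e,x)\in\TJ^n(X\oplus Y)$), whereas you unfold it as an explicit external induction producing a Turing index $\hat e_n$; both arguments live in $\RCAo$ for each fixed standard $n$, and your remarks about avoiding comprehension and about external versus internal induction are exactly the right points to flag.
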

\begin{proof}
  We first note that $\RCAo$ proves that any $\Pi^0_n(X \oplus Y)$ set is many-one reducible to $\TJ^n(X \oplus Y)$ where $\TJ^n$ denotes the $n$-times iteration of the jump operator, and $\Pi^0_n(X\oplus Y)$ denotes the collection of $\Pi^0_n$-definable sets from an oracle $X\oplus Y$. More precisely, there is a primitive recursive function $p$ such that the following holds in $\RCAo$.
  For any set $X$ and $Y$ such that $\TJ^{n}(X \oplus Y)$ exists, we have
  \begin{align*} 
    \forall e,x (\pi^0_n(e,x,X,Y) \leftrightarrow p(e,x) \in \TJ^n(X\oplus Y)).
  \end{align*}

  We work in $\RCAo$ and show that any realizable jump ideal has the $\Pi^0_n$ truth valuation. Let $\M$ be a jump ideal realized by $(f_{\TJ},f_{\oplus})$.
  Then, for any $e,x,i,j$, $\pi^0_n(e,x,\M_i,\M_j)$ is equivalent to 
  $p(e,x) \in \M_{(f_{\TJ})^n(f_{\oplus}(i,j))}$.
  Therefore, the desired $f$ is $\Delta^0_1$-definable as follows.
  \begin{align*}
    f(e,x) = \top \leftrightarrow p(e,x) \in \M_{(f_{\TJ})^nf_{\oplus}(i,j)}.
  \end{align*}
  This completes the proof.
\end{proof}

For a class of formulas $\Gamma$, we define the class $\exists X \Gamma$ by 
$\{\exists X \varphi : \varphi \in \Gamma\}$. Similarly, we define $\exists f \Gamma$ by $\{\exists f \in \N^{\N} \varphi : \varphi \in \Gamma \}$. Here, 
$f \in \N^{\N}$ is the abbreviation of the $\Pi^0_2$ formula stating in $\RCAo$ that $f$ is a function from $\N$ to $\N$.
It is well-known that there exists a  primitive recursive function $p$ such that 
\begin{align*}
  \RCAo \vdash \exists X \pi^0_2(e,X) \leftrightarrow \exists f \pi^0_1(p(e),f).
\end{align*}
Therefore, we may say that $\exists X \Pi^0_2$ is included in $\exists f \Pi^0_1$. 
Since $\exists f \Pi^0_1$ is a subset of $\exists X \Pi^0_2$, we may think that $\exists X \Pi^0_2$ and $\exists f \Pi^0_1$ are the same. Similarly, we may say that 
$\exists X \Pi^0_2$ and $\exists f \Pi^0_2$ are the same.
Henceforth, we identify these three classes.

\begin{remark}
For $n >0$, $\exists X \Pi^0_n$ is also written as $\Sigma^1_1(\Pi^0_n)$. Similarly, a formula of the form $\lnot \varphi$ for some $\varphi \in \Sigma^1_1(\Pi^0_n)$ is said to be $\Pi^1_1(\Sigma^0_n)$.
  Formally, we inductively define $\Pi^1_{m}(\Gamma)$ and $\Sigma^1_{m}(\Gamma)$ for $\Gamma \in \{\Sigma^0_n,\Pi^0_n\}$ as follows.
  \begin{align*}
    \Sigma^1_{m+1}(\Gamma) = \exists X \Pi^1_n(\Gamma), \\
    \Pi^1_{m+1}(\Gamma) = \forall X \Sigma^1_{m}(\Gamma).
  \end{align*}
\end{remark}

\begin{definition}[$\RCAo$]
  For a set $X$, we define the hyperjump $\HJ(X)$ of $X$ by $\HJ(X) = \{(e,n) : \exists Y \pi^0_2(e,n,X,Y)\}$. 
\end{definition}

\begin{remark}
  In the context of computability theory, $\HJ(X)$ is defined as a certain $\Pi^1_1(X)$-complete set. However, we prefer to define $\HJ(X)$ as a $\Sigma^1_1(\Pi^0_2)$-complete set over $\RCAo$ for technical reasons. We note that $\RCAo$ is not sufficient to prove that $\HJ(X)$ is $\Sigma^1_1$-complete. $\ACAo$ is needed to prove the $\Sigma^1_1$-completeness of hyperjumps.
\end{remark}

\begin{definition}[$\RCAo$]
  Let $\M$ be a coded $\omega$-model. 
  We say $\M$ is a $\beta$-model if 
  \begin{align*}
    \exists X \pi^0_2(e,x,X,\M_i,\M_j) \leftrightarrow \M \models \exists X \pi^0_2(e,x,X,\M_k,\M_i)
  \end{align*}
  holds.
\end{definition}

\begin{remark}
  In the usual context, a $\beta$-model is a $\Sigma^1_1$-elementary submodel of the ground model. Similarly to hyperjumps, $\RCAo$ is not enough to prove the $\Sigma^1_1$-elementarity of $\beta$-models.
\end{remark}

It is well-known that the existence of the hyperjump of $X$ is equivalent to the existence of a coded $\beta$-model containing $X$ over $\ACAo$ \cite{Simpson}.
If one reads this proof carefully, one can find that the construction of a coded $\beta$-model from a hyperjump involves information about $\Sigma^1_1$-truth of the constructed model. To clarify this fact, we introduce a realizer for coded models.
\begin{definition}
  Let $\M$ be a coded $\omega$-model.
  We say a function $f$ is a realizer for $\exists X \Pi^0_2$ sentences true in $\M$ (written $f \forces_{\M} \exists X \Pi^0_2$) 
  if $\exists j \pi^0_2(e,x,\M_i,\M_j) \to \pi^0_2(e,x,\M_i,\M_{f(e,x,i)})$.
\end{definition}
As we will see in the proof of Lemma \ref{Lem LPP implies betaRef}, $\HJ(X)$ computes a coded $\beta$-model $\M$ which contains $X$ and has a computable realizer for $\exists X \Pi^0_2$ sentences.

In \cite{suzuki_yokoyama_fp}, the author and Yokoyama introduced a weaker variant of $\beta$-models called $\Delta^0_k\beta$-model to study pseudo leftmost path principles from the point of view of Weihrauch degrees.
\begin{definition}
  Let $n \in \omega$ such that $n > 0$.
  Let $\M$ be a jump ideal. We say $\M$ is a $\Delta^0_k\beta$-model if for any $\Pi^0_2$ formula $\varphi$ with parameters from $\M$, the following holds.
  \begin{align*}
    \exists X \leq_{\T} \TJ^{k-1}(\M) \varphi(X) \to \exists i \varphi(\M_i)
  \end{align*}
\end{definition}
In the next section, 
we show the equivalence of $\Sigma^0_k\LPP$ and the existence of $\Delta^0_k\beta$-models with computable realizers.

\section{The equivalences}
In this section, we show the equivalence of pseudo leftmost path principles and the $\omega$-model reflection of transfinite inductions.
We first introduce pseudo leftmost path principles.

\begin{definition}
  Let $T \subseteq \N^{<\N}$ be an ill-founded tree.
  \begin{itemize}
    \item   We say a path $f$ of $T$ is the leftmost path (written $\LP(T,f)$) if $f$ is the smallest in $[T]$ with respect to the lexicographic order. Here, $[T]$ denotes the set of paths of $T$.
    \item Let $\alpha$ be a well-order. 
    We say a path $f$ of $T$ is a $\Sigma^0_{\alpha}$-leftmost path if $f$ is the smallest in $[T] \cap \{g : g \in \Sigma^{0}_{\alpha}(T \oplus f)\}$ with respect to the lexicographic order.
  \end{itemize}
\end{definition}

\begin{definition} \cite{Towsner_TLPP,suzuki_yokoyama_pi12}
  We define $\Sigma^0_{n}\LPP$, $\ALPP$ and $\TLPP$ as follows.
  \begin{itemize}
    \item $\Sigma^0_{n}\LPP$ is the assertion that any ill-founded tree has a $\Sigma^0_n$-leftmost path,
    \item $\ALPP$ is the assertion that any ill-founded tree has a $\Sigma^0_{\omega}$-leftmost path,
    \item $\TLPP$ is the assertion that for any well-order $\alpha$ and ill-founded tree $T$, $T$ has a $\Sigma^0_{\alpha}$ leftmost path.
  \end{itemize}
\end{definition}

We note that a path of a tree is a total function. Therefore, a path $g \in [T]$ is $\Sigma^0_n$-definable from $T \oplus f$ if and only if 
it is $\Delta^0_n$-definable from $T \oplus f$. 
\begin{definition}
  We say a path $f \in [T]$ is a $\Delta^0_k$ leftmost path  if 
  $f$ is the smallest in $\{g \in [T] : g \leq_{\T} \TJ^{k-1}(T \oplus f)\}$ with respect to the lexicographic order.
  We define $\Delta^0_k\LPP$ as the assertion that any ill-founded tree has a $\Delta^0_k$-leftmost path. 
\end{definition}
\begin{lemma}
  Over $\RCAo$, $\Sigma^0_k\LPP$ is equivalent to $\Delta^0_k\LPP$ for $k \in \omega$ such that $k > 0$.
\end{lemma}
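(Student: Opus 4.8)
The plan is to prove a single set‑equality that yields both implications at once. Fix (working in $\RCAo$) an ill‑founded tree $T$ and a path $f \in [T]$, and consider the two sets of competitors
\[
  A_f = \{g \in [T] : g\text{ is }\Sigma^0_k\text{-definable from }T\oplus f\},\qquad
  B_f = \{g \in [T] : g \leq_{\T} \TJ^{k-1}(T\oplus f)\}.
\]
(Here $k-1\ge 0$ since $k>0$.) I claim $\RCAo$ proves $A_f = B_f$ for every such $T$ and $f$. Granting this, $f$ is the $\leq_{\mathrm{lex}}$-least member of $A_f$ iff it is the $\leq_{\mathrm{lex}}$-least member of $B_f$, i.e.\ $f$ is a $\Sigma^0_k$-leftmost path of $T$ iff $f$ is a $\Delta^0_k$-leftmost path of $T$; quantifying over all ill‑founded $T$ then gives the equivalence of $\Sigma^0_k\LPP$ and $\Delta^0_k\LPP$ directly, with no need to treat the two directions separately.

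For $B_f \subseteq A_f$ I would quote the relativized arithmetical normal‑form theorem: a set that is $\leq_{\T}\TJ^{k-1}(T\oplus f)$ is $\Delta^0_k$-, hence $\Sigma^0_k$-, definable from $T\oplus f$, and this is available already in $\RCAo$ (cf.\ \cite{Simpson}). For $A_f \subseteq B_f$ I would use that a path is a total function, exactly as in the remark preceding the definition of $\Delta^0_k\LPP$: if $g \in [T]$ and $g = \{z : \psi(z)\}$ for a $\Sigma^0_k$ formula $\psi$ with oracle $T\oplus f$, then for each $n$ there is a unique $m$ with $\langle n,m\rangle \in g$, so
\[
  \langle n,m\rangle \in g \ \leftrightarrow\ \psi(\langle n,m\rangle)\ \leftrightarrow\ \forall m'\,(\psi(\langle n,m'\rangle)\to m'=m),
\]
and the rightmost formula is $\Pi^0_k$ over $T\oplus f$; thus $g$ is $\Delta^0_k$-definable from $T\oplus f$, and the normal‑form theorem again gives $g \leq_{\T}\TJ^{k-1}(T\oplus f)$. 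For $k=1$ this last step is just the elementary fact that a total function whose graph is $\Delta^0_1$ over $X$ is Turing reducible to $X$.

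The one delicate point — which I expect to be the main obstacle, though a routine one — is that $\RCAo$ need not prove that $\TJ^{k-1}(T\oplus f)$ exists as a set, so ``$g \leq_{\T}\TJ^{k-1}(T\oplus f)$'' must be read as: there is an index $e$ such that the $e$-th Turing functional computes $g$ when its oracle queries about the $(k-1)$-st jump of $T\oplus f$ are decoded via the appropriate $\Pi^0_{k-1}$ formula (the same convention under which ``$X \leq_{\T} \M_i$'' is used earlier). Under that reading, the equivalence ``$\Delta^0_k$-definable from $X$ $\iff$ $\leq_{\T}\TJ^{k-1}(X)$'' is purely a statement about G\"odel numbers of $\Sigma^0_k$ and $\Pi^0_k$ formulas and their normal forms, and all the needed manipulations are carried out in $\RCAo$ (as in the proof of the Proposition on $\Pi^0_n$ truth valuations above, where the reduction of $\Pi^0_n(X\oplus Y)$ sets to $\TJ^n(X\oplus Y)$ is used). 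Checking that bookkeeping is the only work; the conceptual content is entirely in the set‑equality $A_f = B_f$.
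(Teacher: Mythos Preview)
Your proposal is correct and follows essentially the same approach as the paper. The paper's proof is a two-sentence pointer to the earlier remark that a total function (in particular a path) which is $\Sigma^0_k$-definable from an oracle is automatically $\Delta^0_k$-definable; your argument simply unpacks that remark---the uniqueness trick for totality, the normal-form equivalence with $\leq_{\T}\TJ^{k-1}$, and the intensional reading of the jump in $\RCAo$---so the only difference is the level of detail, not the method.
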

\begin{proof}
  As we have mentioned, a path is $\Sigma^0_k$-definable if and only if it is $\Delta^0_k$-definable. Therefore, a path is $\Sigma^0_k$-leftmost if and only if it is $\Delta^0_k$-leftmost.
\end{proof}

As it is proved in \cite{suzuki_yokoyama_fp}, $\Delta^0_k\LPP$ is equivalent to the assertion that 
any ill-founded tree has a path $f$ that is the smallest in $\{g \in [T] : g \leq_{\T} \TJ^{k-1}(f)\}$. Thus, we do not distinguish between $\Sigma^0_k\LPP,\Delta^0_k\LPP$ and this assertion.

For studying $\Pi^1_2$ theorems above $\TLPP$, the author and Yokoyama introduced the notion of pseudo hyperjumps in \cite{suzuki_yokoyama_pi12}.
\begin{definition}
  Let $\sigma$ be a sentence and $n \in \omega$. We define the $n$-th pseudo hyperjump with the base $\sigma$ as 
  \begin{align*}
    \beta^1_0\RFN(n;\sigma) \equiv \forall X \exists \M (X \in \M \land \M \models \ACAo + \sigma + \exists \HJ^n(X)).
  \end{align*}
\end{definition}
\begin{theorem}\label{Thm summarly} \cite{suzuki_yokoyama_pi12}
  Over $\ACAo$,
  \begin{align*}
    \beta^1_0\RFN(1;\top) < \TLPP < \beta^1_0\RFN(1;\ATRo) < \beta^1_0\RFN(2;\top).
  \end{align*}
  Here, $T < T'$ means that $T'$ proves $T$ and its consistency.
\end{theorem}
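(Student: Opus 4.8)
\medskip
\noindent\emph{Proof proposal.}
The claim is a chain of three strict inequalities, and by the stated convention ``$T_1 < T_2$'' unpacks, over $\ACAo$, into two tasks: that $T_2$ proves $T_1$, and that $T_2$ proves $\mathrm{Con}(\ACAo + T_1)$. I would handle the provability chain and the consistency chain separately, using throughout two standard tools: (A) encoding a search problem as an ill-founded tree and reading off one of its pseudo leftmost paths, and (B) working inside a coded $\omega$-model equipped with a (pseudo-)hyperjump and transferring absolute facts in and out.

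For the provability chain $\beta^1_0\RFN(2;\top) \vdash \beta^1_0\RFN(1;\ATRo) \vdash \TLPP \vdash \beta^1_0\RFN(1;\top)$: \textbf{(i)} $\TLPP \vdash \beta^1_0\RFN(1;\top)$. Given $X$, build an ill-founded tree $T_X$ whose branches code a putative jump ideal containing $X$ together with a putative hyperjump of $X$ and the attendant realizer data, designed so that a branch failing to code a genuine jump ideal has a competitor lying to its left that is arithmetically definable of bounded level in it; then a $\Sigma^0_{\alpha}$-leftmost path of $T_X$, for $\alpha$ a well-order long enough to absorb the $\Sigma^1_1(X)$-facts at stake, must code a coded $\omega$-model of $\ACAo$ containing $X$ and a pseudo-hyperjump of $X$ --- this is Towsner's argument adapted to $\TLPP$. \textbf{(ii)} $\beta^1_0\RFN(1;\ATRo) \vdash \TLPP$. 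Given an ill-founded tree $T$ and a well-order $\alpha$, apply $\beta^1_0\RFN(1;\ATRo)$ to $T \oplus \alpha$ to get a coded $\omega$-model $\M$ with $T,\alpha \in \M$ and $\M \models \ACAo + \ATRo + \exists\HJ(T\oplus\alpha)$; since $\ACAo + \exists\HJ(T)$ proves that $T$ has a leftmost path, $\M$ has a path $f$ with $\M \models$ ``$f$ is the leftmost path of $T$'', i.e.\ $f$ is leftmost among the paths of $T$ in $\M$; since $\M \models \ATRo$ and $\alpha$ is still a well-order in $\M$, $\M$ contains every set $\Sigma^0_{\alpha}$-definable from $T \oplus f$, in particular every such path competing with $f$; hence $f$ is genuinely a $\Sigma^0_{\alpha}$-leftmost path of $T$, a statement arithmetic in $T,\alpha,f,(T\oplus f)^{(\alpha)}$ and hence absolute, so it holds in $V$. \textbf{(iii)} $\beta^1_0\RFN(2;\top) \vdash \beta^1_0\RFN(1;\ATRo)$. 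Given $X$, obtain $\M \ni X$ with $\M \models \ACAo + \exists\HJ^2(X)$; inside $\M$, the second hyperjump computes the coded $\beta$-model $\NN$ of all sets hyperarithmetic in $\HJ(X)$, which contains $X$ and $\HJ(X)$, is a jump ideal, and --- since each of its well-orders has order type below the least ordinal not recursive in $\HJ(X)$ and the corresponding iterates of arithmetic operators are uniformly computable from the second hyperjump --- satisfies $\ATRo$; so $\M$ certifies $\NN$ as a coded $\omega$-model of $\ACAo + \ATRo + \exists\HJ(X)$ containing $X$, and this is absolute. (Alternatively, (ii) follows from the equivalence $\TLPP \equiv \beta^1_0\RFN(1;\Pi^1_1\hyp\TI)$ announced in the introduction, once one checks that $\ATRo$ proves $\Pi^1_1\hyp\TI$.)

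For the consistency chain, I would reduce --- via the proposition of Section~2 that $\RFN(\mathsf{S})$ implies $\mathrm{Con}(\ACAo + \mathsf{S})$ --- to showing that each stronger principle proves the $\omega$-model reflection of $\ACAo$ together with the weaker one; each such implication is an iteration argument producing a coded $\omega$-model that contains the prescribed set and is closed under the construction witnessing the weaker principle. The crucial instance is $\TLPP \vdash \RFN(\ACAo + \beta^1_0\RFN(1;\top))$: build a tree coding an $\omega$-sequence $\M_0 \subseteq \M_1 \subseteq \cdots$ in which each $\M_{i+1}$ is a pseudo-hyperjump model over $\M_i$, arrange the ``a wrong branch has a competitor to its left'' pattern so that this is enforced, and apply $\TLPP$ with a well-order long enough that the enforcement survives all $\omega$ stages; the union is then a model of $\beta^1_0\RFN(1;\top)$. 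Here the transfinite clause of $\TLPP$ is essential --- $\ALPP$, supplying only $\Sigma^0_\omega$-leftmost paths and no choice principle, cannot assemble such a sequence. The remaining two steps are analogous: for $\beta^1_0\RFN(1;\ATRo) \vdash \RFN(\ACAo + \TLPP)$ one builds the reflecting model so that $\ATRo$ and suitable hyperjumps are available internally for all its trees and well-orders, letting the argument of (ii) run inside it; for $\beta^1_0\RFN(2;\top) \vdash \RFN(\ACAo + \beta^1_0\RFN(1;\ATRo))$ the extra hyperjump is spent precisely on the closure. Throughout, the realizer machinery of Section~2 is what makes ``$\models$'' verifiable in the ambient weak theory.

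The main obstacle is the consistency chain, and within it the precise calibration of how far the iterated $\TLPP$-construction reaches: it must yield exactly $\RFN(\ACAo + \beta^1_0\RFN(1;\top))$ --- no less, and (since the bound $\TLPP < \beta^1_0\RFN(1;\ATRo)$ must survive) no more --- so one has to pin down precisely which transfinite inductions the construction validates, and dually one must show that a single $\ATRo$-model-with-hyperjump over a rich enough base already internally verifies all of $\TLPP$. A cleaner alternative is to route the $\mathrm{Con}$-claims through the equivalences announced in the introduction (notably $\TLPP \equiv \beta^1_0\RFN(1;\Pi^1_1\hyp\TI)$ and $\ALPP \equiv \beta^1_0\RFN(1;\top)$) together with the quoted characterization $\Pi^1_{n+1}\hyp\RFN \equiv \Pi^1_n\hyp\TI$, reducing everything to comparisons among $\omega$-model reflections of transfinite inductions --- which is precisely the business of Section~3 and the technical heart of the paper.
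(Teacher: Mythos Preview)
The paper does not prove this theorem; it is quoted from \cite{suzuki_yokoyama_pi12} with no argument supplied, so there is no in-paper proof to compare your proposal against.

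On your reconstruction: the provability chain (i)--(iii) is a fair sketch and matches the approach of the cited paper; step~(ii) in particular is exactly the route via the characterization recorded here as Lemma~\ref{Lem Jump closed}. One caution: your parenthetical alternative for (ii), ``once one checks that $\ATRo$ proves $\Pi^1_1\hyp\TI$'', does not go through. Over $\ACAo$, $\Pi^1_1\hyp\TI$ is equivalent to $\Sigma^1_1\hyp\DC$ (as invoked in the proof of the equivalence $\TLPP \leftrightarrow \beta^1_0\RFN(1;\Pi^1_1\hyp\TI)$ just below), and $\ATRo$ does not prove $\Sigma^1_1\hyp\DC$; the implication $\beta^1_0\RFN(1;\ATRo)\to\TLPP$ genuinely needs your first argument via jump-closure, not a theory inclusion.

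The consistency chain is where the substance lies, and your sketch there is too schematic to count as a proof --- which you yourself flag. The step $\TLPP\vdash\RFN(\beta^1_0\RFN(1;\top))$ in particular requires a careful iteration argument carried out in \cite{suzuki_yokoyama_pi12}; ``a well-order long enough'' does not by itself explain why the union of the $\M_i$ is an $\omega$-model of $\ACAo$ or why each $\M_i$ really believes it contains $\HJ(\M_{i-1})$. Finally, your proposed cleaner alternative of routing the consistency claims through the equivalences announced in the introduction would be circular as the paper is organized: Corollary~\ref{Cor: pseudo Pi^1_1-TI < pseudo ATR} and the corollary following Theorem~\ref{thm: n+1 equiv n + TI} both invoke the present theorem.
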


We now provide a new characterization of $\TLPP$ via a pseudo hyperjump.
First, we introduce some properties of $\TLPP$.

\begin{lemma}[$\TLPP$] \cite[Theorem 4.6.]{Towsner_TLPP}
For any ill-founded tree $T$, there exists an $\omega$-model of $\Sigma^1_1\hyp\DC_0$ which believes that 
$T$ has a leftmost path.
\end{lemma}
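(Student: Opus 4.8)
The plan is to reduce the lemma to an $\omega$-model reflection statement and then to establish that reflection by a leftmost-path construction.

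\textbf{Reduction.} First I would note that it suffices to produce, for the given ill-founded tree $T$, a coded $\omega$-model $\M$ of $\Sigma^1_1\hyp\DC_0$ with $T \in \M$. Indeed, $\Sigma^1_1\hyp\DC_0$ proves $\forall X\, \exists\HJ(X)$ (equivalently, that every set lies in a countable coded $\beta$-model); since $\HJ(T)$ is $\Sigma^1_1(T)$-complete, the $\Sigma^1_1$ set of ill-founded subtrees of $T$ exists, and hence---by the greedy left-descent that always passes to the leftmost immediate successor still carrying a path---$T$ has a leftmost path. So any such $\M$ with $T \in \M$ automatically believes that $T$ has a leftmost path, and the real content of the lemma is that $\TLPP$ proves the $\omega$-model reflection of $\Sigma^1_1\hyp\DC_0$.

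\textbf{The construction.} The difficulty is that $\TLPP$ cannot build a $\Sigma^1_1\hyp\DC_0$-model over $T$ by direct transfinite recursion, since iterating the hyperjump transfinitely has strength around $\PCAo$. Instead I would use the leftmost-path trick: design a tree $S = S_T$ whose infinite paths code increasingly long approximations to a coded $\omega$-model of $\Sigma^1_1\hyp\DC_0$ containing $T$---concretely, partial transfinite hyperjump hierarchies over $T$ together with realizer-style data witnessing closure under finitely many instances of $\Sigma^1_1$-dependent choice---arranged so that (i) $S$ is ill-founded, because partial hierarchies of any bounded length exist already in $\ACAo$ and the single-hyperjump steps are handled by $\beta^1_0\RFN(1;\top)$, which $\TLPP$ proves by Theorem~\ref{Thm summarly}; and (ii) moving leftward in $S$ commits a path to coding a longer, more closed hierarchy. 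Then I would fix a well-order $\alpha$ exceeding the $\Sigma^1_1\hyp\DC_0$-closure ordinal relative to $T$ and apply $\TLPP$ to $S$ and $\alpha$, obtaining a path $f \in [S]$ lexicographically least among the paths of $S$ that are $\Sigma^0_\alpha$-definable from $S\oplus f$. Decoding $\M$ from $f$, one shows that the $\Sigma^0_\alpha$-leftmost-ness of $f$ forces $\M$ to satisfy every instance of $\Sigma^1_1$-dependent choice: if some instance failed, then---because $S$ is engineered so that repairing a single failed instance is a bounded, $\Sigma^0_\alpha(S\oplus f)$-visible leftward move---one would obtain a path of $S$ strictly to the left of $f$, a contradiction. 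Hence $\M \models \Sigma^1_1\hyp\DC_0$ with $T \in \M$, and the reduction step closes the argument.

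\textbf{The main obstacle.} The hard part is the joint design of $S$ and the choice of $\alpha$: one must keep $S$ genuinely ill-founded while guaranteeing that a $\Sigma^0_\alpha$-leftmost path of $S$ codes a fully $\Sigma^1_1\hyp\DC_0$-closed model. This is delicate because closure under $\Sigma^1_1$-dependent choice is an $\omega$-fold iteration of a genuinely $\Sigma^1_1$ operator, whereas $\Sigma^0_\alpha$-definability is only ``arithmetic in an $\alpha$-jump''. The device for getting around this is to push the non-arithmetic content of the construction into the choice of the path $f$ itself---a path of a tree need not be $\Sigma^0_\alpha(S\oplus f)$-definable---so that only the combinatorial commitments need be $\Sigma^0_\alpha$-visible. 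Verifying that the chosen $\alpha$ is long enough, and in particular that such an $\alpha$ is available from $\TLPP$ rather than only from $\PCAo$, is exactly where one uses that $\TLPP$ is strictly stronger than $\beta^1_0\RFN(1;\top)$.
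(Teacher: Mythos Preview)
Your reduction step contains a fundamental error: $\Sigma^1_1\hyp\DC_0$ does \emph{not} prove $\forall X\,\exists\HJ(X)$. Over $\ACAo$, $\Sigma^1_1\hyp\DC_0$ is equivalent to $\Pi^1_1\hyp\TI$, and both are strictly weaker than $\PCAo$; the existence of hyperjumps (equivalently, of coded $\beta$-models containing a given set) is exactly $\PCAo$. Consequently $\Sigma^1_1\hyp\DC_0$ does not prove the leftmost-path principle, and an arbitrary $\omega$-model of $\Sigma^1_1\hyp\DC_0$ containing an ill-founded tree $T$ need \emph{not} believe $T$ has a leftmost path. The reduction therefore fails, and with it the whole strategy: producing merely an $\omega$-model of $\Sigma^1_1\hyp\DC_0$ containing $T$ is not sufficient, so the elaborate tree $S$ you go on to build is aimed at the wrong target.

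The paper does not re-prove this lemma---it is quoted from Towsner---but the intended argument runs in the opposite direction from yours. One applies $\TLPP$ \emph{directly to the given tree $T$}: for a sufficiently long well-order $\alpha$, $\TLPP$ produces a $\Sigma^0_\alpha$-leftmost path $f$ of $T$. The model $\M$ is then assembled from $T\oplus f$ by closing under iterated jumps along $\alpha$. Every path of $T$ lying in $\M$ is $\Sigma^0_\alpha(T\oplus f)$-definable, so $f$ is leftmost among them by construction; that is how $\M$ comes to believe $f$ is the leftmost path. The remaining work is to check that this jump-closed model satisfies $\Sigma^1_1\hyp\DC_0$, which is where the transfinite length $\alpha$ is used (compare Lemma~\ref{Lem Jump closed}). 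In short, the belief in a leftmost path is not inherited from $\Sigma^1_1\hyp\DC_0$; it is manufactured by letting the pseudo-leftmost path supplied by $\TLPP$ serve as the witness inside a model tailored to see no path further left.
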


\begin{lemma}[$\RCAo$]\label{Lem Jump closed} \cite[Lemma 4.23.]{suzuki_yokoyama_pi12}
$\TLPP$ is equivalent to the assertion that for any well-order $\alpha$ and set $X$, there exists an $\omega$-model $\M$ such that 
$\alpha,X \in \M$ and $\M$ believes $\HJ(X)$ exists and $\M$ is closed under $\alpha$-times Turing jump.
\end{lemma}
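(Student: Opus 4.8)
The plan is to establish the two implications of the equivalence separately; write $(\star)$ for the stated principle. Following Section~2, the phrase ``$\M$ believes $\HJ(X)$ exists'' is to be read via a realizer, so the models in play are realizable jump ideals, in particular $\omega$-models of $\ACAo$. For $(\star)\Rightarrow\TLPP$: given a well-order $\alpha$ and an ill-founded tree $T$, I would apply $(\star)$ to $X=T$ and $\alpha$ to get $\M$ with $T,\alpha\in\M$, $\M\models\ACAo+\exists\HJ(T)$, and $\M$ closed under $\alpha$-fold Turing jump. Inside $\M$ the hyperjump of $T$ is $\Sigma^1_1(T)$-complete (the $\Sigma^1_1$-completeness of the hyperjump being provable in $\ACAo$), so $\M$ can uniformly decide which $T$-recursive subtrees it regards as ill-founded and can run the usual leftmost-path construction, producing a path $f$ of $T$ with $f\in\M$; since ``$f\in[T]$'' is $\Pi^0_1$ this holds in the ground model as well. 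I would then verify that $f$ is genuinely a $\Sigma^0_\alpha$-leftmost path: if some $g\in[T]$ were lexicographically below $f$ with $g\in\Sigma^0_\alpha(T\oplus f)$ then, a path being a total function, $g$ would be Turing-reducible to the $\alpha$-fold Turing jump of $T\oplus f$, which lies in $\M$ since $T\oplus f\in\M$ and $\M$ is closed under $\alpha$-fold jump; hence $g\in\M$, and at the first node where $g$ and $f$ disagree, $g$ witnesses inside $\M$ that a subtree to the left of $f$'s branch is ill-founded --- contradicting the construction of $f$.

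For the converse $\TLPP\Rightarrow(\star)$ I would first collect the tools available: $\TLPP$ proves $\ATRo$ (so all iterated Turing jumps exist), it proves $\beta^1_0\RFN(1;\top)$ by Theorem~\ref{Thm summarly}, and by Towsner's lemma above every ill-founded tree has an $\omega$-model of $\Sigma^1_1\hyp\DC_0$ believing it has a leftmost path. Fixing $\alpha$ and $X$ and putting $\alpha^{*}=\alpha\cdot 2$, the idea is to code the desired model as a leftmost path. A natural candidate is the tree $T^{*}$ whose infinite paths code: a linear order $L$ having $\alpha^{*}$ as an initial segment and closed under adding any $\beta\prec\alpha$; a jump pseudohierarchy $\langle H_\xi\rangle_{\xi\prec L}$ over $X\oplus\alpha$ along $L$ in which each $H_{\xi+1}$ extends $\TJ(H_\xi)$ together with a code of an $\omega$-model of $\ACAo+\exists\HJ(H_\xi)$; and a set $H$ which is declared to be the hyperjump of $X$, its positive entries accompanied by $\Sigma^1_1$-witnesses among the $H_\xi$ and its negative entries by ranking functions into $L$ certifying, within the model being built, that the corresponding witness-searches are well-founded --- all these relationships being made arithmetically checkable along the path. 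Using $\ATRo$, $\beta^1_0\RFN(1;\top)$ and a choice principle along a well-order (available from $\ATRo$) one builds a genuine such hierarchy of well-ordered length, so $T^{*}$ is ill-founded; applying $\TLPP$ to $\alpha^{*}$ and $T^{*}$ gives a $\Sigma^0_{\alpha^{*}}$-leftmost path $f$, and I would set $\M=\{Z:\exists\xi\prec L_f\,(Z\leq_{\T}H_\xi)\}$ with the realizer read off from $f$. The leftmost-ness forces $L_f$ to carry no $\Sigma^0_{\alpha^{*}}(f)$-descending sequence, so $\alpha^{*}$ is a genuine well-ordered initial segment of $L_f$; then $\TJ^{\beta}(H_\xi)\leq_{\T}H_{\xi+\beta}$ together with closure of $L_f$ under adding $\beta\prec\alpha$ shows $\M$ is closed under $\alpha$-fold jump, it contains $X$ and $\alpha$, and the component $H$ (with its certifying data, all of which is in $\M$) makes $\M$ believe $\HJ(X)$ exists while, thanks to the ill-founded tail of $L_f$, not forcing $\HJ(X)$ to actually exist.

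The step I expect to be the main obstacle is this second construction, and specifically explaining why the full leftmost path --- not merely a reflecting $\omega$-model --- is needed. One is solving a fixed point: a single model that is at once closed under $\alpha$-fold jump, hence rich in sets, and contains \emph{its own} internal hyperjump of $X$. A single application of $\beta^1_0\RFN(1;\top)$ gives an $\omega$-model with a hyperjump but no control over jump-closure, and naively iterating $\beta^1_0\RFN$ along the jump hierarchy re-introduces, at each new stage, sets that spoil the internal correctness of the hyperjump chosen earlier; the $\Sigma^0_{\alpha^{*}}$-leftmost path breaks this circle, by constraining the internal pseudo-well-order $L_f$ tightly enough that the genuine initial segment $\alpha^{*}$ supplies all the required jump hierarchies while the ill-founded tail still certifies the hyperjump. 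Getting the coding of $T^{*}$ right --- so that every condition along a path is arithmetic and hence absolute, and so that the slack between $\alpha$ and $\alpha^{*}$ absorbs the Turing reductions arising in the verification --- is, I expect, where the real work lies.
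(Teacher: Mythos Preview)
The paper does not give its own proof of this lemma: it is quoted from \cite[Lemma~4.23]{suzuki_yokoyama_pi12} and used as a black box in the proof of the next theorem. So there is nothing in the present paper to compare your argument against; I can only comment on your proposal on its own terms.

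Your direction $(\star)\Rightarrow\TLPP$ is essentially right, modulo one slip. You apply $(\star)$ with $X=T$, but nothing then guarantees that $\M$ regards $T$ as ill-founded, so your ``internal leftmost path'' construction has nothing to act on. The fix is routine: pick a path $p\in[T]$ first (it exists by hypothesis) and apply $(\star)$ to $X=T\oplus p$ and $\alpha$; now $p\in\M$ and your argument goes through verbatim.

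For $\TLPP\Rightarrow(\star)$ your pseudo-hierarchy tree $T^{*}$ is far more machinery than the statement needs, and you yourself flag the coding as the place where the real work would lie. A direct argument using only tools already recorded in the paper is available. Given $\alpha$ and $X$, apply $\TLPP$ to the tree $\Phi_T(X\oplus\alpha)$ with the well-order $\alpha\cdot\omega$, obtaining a $\Sigma^0_{\alpha\cdot\omega}$-leftmost path $f$; note that $X\oplus\alpha\leq_{\T} f$. Since $\TLPP$ proves $\ATRo$, the hierarchy $\langle \TJ^{\gamma}(f)\rangle_{\gamma\prec\alpha\cdot\omega}$ exists, and one lets $\M$ be the coded jump ideal of all sets Turing-below some level of it. Then $X,\alpha\in\M$, and $\M$ is closed under $\alpha$-fold jump because $\alpha\cdot n+\alpha=\alpha\cdot(n+1)\prec\alpha\cdot\omega$. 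Any path $g\in\M\cap[\Phi_T(X\oplus\alpha)]$ lies in $\Sigma^0_{\alpha\cdot\omega}(f)=\Sigma^0_{\alpha\cdot\omega}(\Phi_T(X\oplus\alpha)\oplus f)$, so $f\leq_{l}g$; hence $\M\models\LP(\Phi_T(X\oplus\alpha),f)$ and therefore $\M$ believes $\HJ(X\oplus\alpha)$, and a fortiori $\HJ(X)$, exists. No fixed-point construction, no pseudo-well-order with an ill-founded tail, and no delicate coding of $T^{*}$ is required.
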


To connect leftmost paths and hyperjumps, we use the following lemma.
\begin{lemma}
  Over $\RCAo$, we may assume that there exists a total Turing functional $\Phi_T(X)$ such that
  \begin{itemize}
    \item $\Phi_T(X)$ is a tree,
    \item $\Phi_T(X)$ have a infinite path which is uniformly computable from $X$,
    \item each path of $\Phi_T(X)$ computes $X$,
    \item if $\LP(\Phi_T(X),f)$ holds, then $\HJ(X) \leq_{\T} f$.
  \end{itemize}
\end{lemma}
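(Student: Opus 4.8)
The plan is to realize $\Phi_T$ as a ``search tree'' for the $\Sigma^1_1(X)$ facts that make up $\HJ(X)$. I will use the Kleene normal form, uniformly in the indices and provable in $\RCAo$: there is a functional $X \mapsto \langle S^X_{e,m}\rangle_{e,m}$, uniformly computable in $X$, with each $S^X_{e,m}$ a tree in $\N^{<\N}$, such that $\exists Y\,\pi^0_2(e,m,X,Y)$ holds if and only if $S^X_{e,m}$ has an infinite path; write $n \mapsto (e_n,m_n)$ for the inverse pairing. I will build $\Phi_T(X)$ so that an infinite path codes three kinds of data interleaved into the columns of $\N^{\N}$: a characteristic function $c\colon \N \to \{0,1\}$; for each $n$ a would-be path $Y_n$ through $S^X_{e_n,m_n}$; and a verbatim copy of $X$. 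The cells will be laid out so that, at ``stage $n$'', the value $c(n)$ is committed first and only afterwards are the witnesses and the copy of $X$ each extended by one more bit --- this interleaving is the single delicate point.

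Concretely, a string $\sigma$ goes into $\Phi_T(X)$ exactly when (i) its copy-of-$X$ cells agree with $X$, and (ii) for every $n$ with $c(n)=1$ already committed in $\sigma$, the $Y_n$-fragment of $\sigma$ lies in $S^X_{e_n,m_n}$, with no constraint on $Y_n$ when $c(n)=0$. Both conditions are $\Delta^0_1$ in $X$, so $\Phi_T$ is a total Turing functional, and since they are inherited by initial segments, $\RCAo$ proves $\Phi_T(X)$ is a tree. Putting $c\equiv 0$, all $Y_n \equiv 0$, and the copy of $X$ equal to $X$ produces an infinite path uniformly computable from $X$; and since every initial segment of any $f\in[\Phi_T(X)]$ satisfies (i), each such $f$ has its copy-of-$X$ cells equal to $X$, hence $X\leq_{\T} f$. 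So the first three clauses fall out of the definition.

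For the leftmost-path clause I fix the lexicographic convention so that ``$c(n)=1$'' is coded by the smaller cell value; then among two paths agreeing below the cell deciding $c(n)$, the one committing $c(n)=1$ is lexicographically smaller. First, for an arbitrary path $f$: if $c_f(n)=1$ then, using (ii) along all initial segments of $f$, the $Y_n$-column of $f$ is an infinite path of $S^X_{e_n,m_n}$, so $\exists Y\,\pi^0_2(e_n,m_n,X,Y)$, i.e. $(e_n,m_n)\in\HJ(X)$. Conversely, suppose $\LP(\Phi_T(X),f)$ and $(e_n,m_n)\in\HJ(X)$; then $S^X_{e_n,m_n}$ has an infinite path $Z$, and I claim $c_f(n)=1$. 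If not, I would build $g$ copying $f$ on every cell below the $c(n)$-cell --- these are the cells of stages $0,\dots,n-1$, so in particular no bit of $Y_n$ --- then committing $c_g(n)=1$, taking $Z$ as its $Y_n$-column, and copying $f$ on everything else (the bits $c_f(m)$ and the $Y_m$-columns of $f$ for $m\neq n$, and $X$ again for the copy). Such $g$ exists by $\Delta^0_1$ comprehension relative to $f\oplus X\oplus Z$; checking (i) and (ii) along its initial segments shows $g\in[\Phi_T(X)]$ --- here the layout matters: nothing about $Y_n$ was committed before the $c(n)$-cell, so the switch to $Z$ conflicts with nothing inherited from $f$ --- while $g$ first differs from $f$ at the $c(n)$-cell, where it is smaller, contradicting leftmostness. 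Hence for the leftmost path $c_f(n)=1 \iff (e_n,m_n)\in\HJ(X)$, and since the $c$-column of $f$ is computable from $f$, this yields $\HJ(X)\leq_{\T} f$.

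The only genuine obstacle is the interleaving bookkeeping just mentioned: the construction must guarantee that the yes/no answer for each column is committed strictly before any witness bit for that column, so that the cheaper path --- answering ``yes'' with a fresh witness $Z$ --- can be spliced in without colliding with data already copied from $f$. Everything else reduces to unwinding the definition of $\HJ$ and the Kleene normal form, both of which behave correctly over $\RCAo$.
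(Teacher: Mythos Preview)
Your construction is correct: the search-tree encoding you describe---paths coding a guess $c$ at $\HJ(X)$, column-by-column witnesses $Y_n$, and a verbatim copy of $X$---is the standard way to reduce the hyperjump to a leftmost-path problem, and the interleaving you flag (committing $c(n)$ strictly before any bit of $Y_n$) is exactly what makes the splicing argument for the fourth clause go through. The paper itself does not prove this lemma; it simply refers the reader to \cite[Section~4]{suzuki_yokoyama_pi12} and remarks that the argument there, stated over $\ACAo$, can be checked to work in $\RCAo$. Your write-up thus supplies a self-contained proof where the paper gives only a pointer, and the approach is the expected one.
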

  For details, see \cite[Section 4]{suzuki_yokoyama_pi12}.
  We note that in \cite{suzuki_yokoyama_pi12}, the above properties are proved in $\ACAo$ rather than $\RCAo$. However, if one looks at the proofs carefully, one can find that these properties hold in $\RCAo$.

\begin{theorem}
  Over $\RCAo$, $\TLPP$ is equivalent to $\beta^1_0\RFN(1;\Pi^1_1\hyp\TI)$.
\end{theorem}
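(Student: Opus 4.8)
The plan is to prove the two implications separately; the forward one is the easier of the two.

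\emph{From $\TLPP$ to $\beta^1_0\RFN(1;\Pi^1_1\hyp\TI)$.} Fix a set $X$. The tree functional $\Phi_T$ from the lemma above is total, so $\Phi_T(X)$ exists in $\RCAo$, and it is ill-founded since it has a path computable from $X$. Applying Towsner's lemma (\cite[Theorem 4.6]{Towsner_TLPP}, available since we assume $\TLPP$) to $\Phi_T(X)$ yields a coded $\omega$-model $\M$ of $\Sigma^1_1\hyp\DC_0$ with $\Phi_T(X)\in\M$ and $\M\models\exists f\,\LP(\Phi_T(X),f)$; fix such an $f\in\M$. Because the four properties of $\Phi_T$ are provable already in $\RCAo$, they relativize to $\M$: from ``every path of $\Phi_T(X)$ computes $X$'' we get $X\leq_{\T}f$ in $\M$, hence $X\in\M$; and from ``$\LP(\Phi_T(X),f)\to\HJ(X)\leq_{\T}f$'' we get $\M\models\HJ(X)\leq_{\T}f$, hence $\M\models\exists\HJ(X)$. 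Since $\Sigma^1_1\hyp\DC_0$ proves $\ACAo+\Pi^1_1\hyp\TI$ (see \cite{Simpson}), $\M$ witnesses $\beta^1_0\RFN(1;\Pi^1_1\hyp\TI)$ at $X$, and $X$ was arbitrary.

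\emph{From $\beta^1_0\RFN(1;\Pi^1_1\hyp\TI)$ to $\TLPP$.} By Lemma \ref{Lem Jump closed} it suffices to produce, for an arbitrary well-order $\alpha$ and set $X$, an $\omega$-model containing $\alpha$ and $X$, closed under $\alpha$-times Turing jump, and satisfying $\exists\HJ(X)$. Apply $\beta^1_0\RFN(1;\Pi^1_1\hyp\TI)$ to $\alpha\oplus X$ to obtain $\M$ with $\alpha\oplus X\in\M$ and $\M\models\ACAo+\Pi^1_1\hyp\TI+\exists\HJ(\alpha\oplus X)$; by the equivalence $\Pi^1_2\hyp\RFN\equiv\Pi^1_1\hyp\TI$ recalled above, $\M\models\Pi^1_2\hyp\RFN$. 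The plan is to carry out the construction of the required model inside $\M$ and then transfer it to the ground model by absoluteness. Inside $\M$, the hyperjump $\HJ(\alpha\oplus X)$ together with a realizer for it gives the coded $\beta$-model of sets hyperarithmetic in $\alpha\oplus X$; this is already closed under $\alpha$-times Turing jump (the relevant Church--Kleene ordinal being closed under ordinal addition) but it need not believe $\HJ(X)$ exists. To repair this, note that ``there is a realizable coded $\omega$-model containing $\alpha$ and $X$, closed under $\alpha$-times Turing jump, in which $\HJ(X)$ exists'' can be written with the closure clause witnessed by a realizer, hence is of the form $\exists f\,\Pi^0_k$ and so is $\Pi^1_2$; one shows $\M$ satisfies it --- using its hyperjump together with $\Pi^1_1$-transfinite induction along a sufficiently long $\M$-well-order to run the transfinite construction that closes the model off under the jump operation while absorbing its own relativized hyperjump of $X$ --- and then applies $\Pi^1_2\hyp\RFN$ to reflect it to a coded $\omega$-model inside $\M$. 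Since being a path, being a Turing-jump hierarchy, and believing $\HJ(X)$ exists are all arithmetic in the codes involved, hence absolute between $\M$ and the ground model, that model genuinely satisfies the hypothesis of Lemma \ref{Lem Jump closed}, and $\TLPP$ follows.

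I expect the last step of the backward direction to be the crux: extracting from $\M$ --- which carries only a \emph{single} hyperjump on top of $\ACAo+\Pi^1_1\hyp\TI$ --- an $\omega$-model that is simultaneously closed under $\alpha$-times Turing jump \emph{and} believes $\HJ(X)$ exists. These demands are in tension: closure under $\alpha$-jumps forces the model to be ``tall'', while believing $\HJ(X)$ forces it to contain a set capturing its own $\Sigma^1_1$-in-$X$ jump, and each successive closure pushes that set back out of reach, so the fixed point is attained only after a transfinite iteration whose length outstrips what a single hyperjump provides on its own. Reconciling the two is exactly where $\Pi^1_1$-transfinite induction (equivalently, $\Pi^1_2$-reflection) is indispensable; the forward direction, by contrast, needs none of this, since Towsner's lemma already delivers a model of $\Sigma^1_1\hyp\DC_0$, a theory subsuming $\Pi^1_1\hyp\TI$.
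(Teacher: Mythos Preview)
Your forward direction is correct and is exactly the paper's argument.

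Your backward direction is overcomplicated, and the elaborate part is both unnecessary and unsubstantiated. The model $\M$ you obtain from $\beta^1_0\RFN(1;\Pi^1_1\hyp\TI)$ applied to $\alpha\oplus X$ is \emph{already} the witness required by Lemma~\ref{Lem Jump closed}: it contains $\alpha$ and $X$; it believes $\HJ(X)$ exists (since $\M\models\exists\HJ(\alpha\oplus X)$ and $\HJ(X)$ is $m$-reducible to $\HJ(\alpha\oplus X)$ inside any model of $\ACAo$); and --- this is the point you miss --- it is already closed under $\alpha$-times Turing jump. Indeed, $\Pi^1_1\hyp\TI_0$ (equivalently $\Sigma^1_1\hyp\DC_0$ over $\ACAo$) proves $\ATRo$, so for every $Y\in\M$ and every $\beta\in\M$ with $\M\models\WO(\beta)$, the $\beta$-iterated Turing jump of $Y$ exists in $\M$; since well-foundedness is downward absolute to $\omega$-submodels, $\M\models\WO(\alpha)$, and $\M$ is therefore closed under $\alpha$-iterated jumps. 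That is the paper's entire argument for this direction.

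The ``tension'' you diagnose in your final paragraph is accordingly illusory. You worry that closing a model under $\alpha$-jumps might destroy the condition ``$\HJ(X)$ exists'', because enlarging the second-order part could add new witnesses to the existential quantifier in the defining formula of $\HJ$. But $\M$ never needs to be enlarged: satisfying $\Pi^1_1\hyp\TI$ already forces it to be closed under $\alpha$-jumps. Your plan --- to build a coded $\beta$-model inside $\M$, run a transfinite construction ``absorbing its own relativized hyperjump of $X$'', and then invoke $\Pi^1_2\hyp\RFN$ --- is not needed, and the central step of that construction is only gestured at, not carried out.
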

\begin{proof}
Since $\Pi^1_1\hyp\TI_0$ is equivalent to $\Sigma^1_1\hyp\DC_0$ over $\ACAo$, 
it is enough to show the equivalence of $\TLPP$ and  $\beta^1_0\RFN(1;\Sigma^1_1\hyp\DC)$.

We first show that $\TLPP$ implies $\beta^1_0\RFN(1;\Sigma^1_1\hyp\DC)$.
Suppose $\TLPP$.
Let $X$ be a set.
Then, there exists an $\omega$-model $\M$ of $\Sigma^1_1\hyp\DC_0$ such that 
$\M$ believes $\Phi_T(X)$ has a leftmost path.
Therefore, $\M$ satisfies $\Sigma^1_1\hyp\DC_0$ and there exists the hyperjump of $X$.

We then show the converse direction by Lemma \ref{Lem Jump closed}.
Let $\alpha$ be a well-order and $X$ be a set.
Take an $\omega$-model $\M$ such that $\alpha,X \in \M$ and $\M \models \Sigma^1_1\hyp\DC_0$.
Then, $\M$ is closed under $\alpha$-times Turing jump.
\end{proof}

\begin{remark}
  We note that $\Pi^1_1\hyp\TI$ can be weaken to Montalb\'{a}n's Jump Iteration $\mathsf{JI}$ \cite{montalban2006indecomposable} in the above proof.
\end{remark}

\begin{corollary}\label{Cor: pseudo Pi^1_1-TI < pseudo ATR}
  Over $\RCAo$, $\beta^1_0\RFN(1;\Pi^1_1\hyp\TI) < \beta^1_0\RFN(1;\ATRo)$.
\end{corollary}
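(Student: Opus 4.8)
The plan is to derive the corollary from the theorem just proved, namely $\RCAo \vdash \TLPP \leftrightarrow \beta^1_0\RFN(1;\Pi^1_1\hyp\TI)$, together with Theorem \ref{Thm summarly}, which already records $\TLPP < \beta^1_0\RFN(1;\ATRo)$ over $\ACAo$. Unwinding the meaning of ``$<$'', I must show over $\RCAo$ that (i) $\beta^1_0\RFN(1;\ATRo)$ implies $\beta^1_0\RFN(1;\Pi^1_1\hyp\TI)$, and (ii) $\beta^1_0\RFN(1;\ATRo)$ proves $\mathrm{Con}(\RCAo + \beta^1_0\RFN(1;\Pi^1_1\hyp\TI))$.

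Part (i) is immediate: $\ATRo$ proves $\Pi^1_1\hyp\TI$, so any coded $\omega$-model of $\ACAo + \ATRo + \exists \HJ^1(X)$ is also a model of $\ACAo + \Pi^1_1\hyp\TI + \exists \HJ^1(X)$ (the clause $\exists\HJ^1(X)$ being untouched), so the same witness $\M$ serves both reflection principles; here only $\RCAo$ is used, since $\M \models \cdot$ obeys its Tarskian clauses by definition. For part (ii), first note that $\RCAo + \TLPP$ and $\RCAo + \beta^1_0\RFN(1;\Pi^1_1\hyp\TI)$ are deductively equivalent by the preceding theorem, hence interchangeable for consistency purposes, so it suffices to prove $\mathrm{Con}(\RCAo + \TLPP)$. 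Since $\beta^1_0\RFN(1;\ATRo)$ asserts in particular that every set lies in a coded $\omega$-model of $\ACAo$, it proves $\ACAo$ (each instance of arithmetical comprehension being verified inside such a model, using absoluteness of the fixed arithmetical formula between the model and the ground model). Thus I may argue over $\ACAo + \beta^1_0\RFN(1;\ATRo)$, where Theorem \ref{Thm summarly} gives $\mathrm{Con}(\ACAo + \TLPP)$, and a fortiori $\mathrm{Con}(\RCAo + \TLPP)$, which is what is needed.

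I do not anticipate a genuine obstacle; this is essentially a bookkeeping corollary assembled from the preceding theorem and Theorem \ref{Thm summarly}. The only point deserving a sentence of care is the mismatch between the ``over $\RCAo$'' phrasing of the corollary and the ``over $\ACAo$'' phrasing of Theorem \ref{Thm summarly}, and this is absorbed by the observation that $\beta^1_0\RFN(1;\ATRo)$ already proves $\ACAo$ over $\RCAo$.
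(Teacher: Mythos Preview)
Your overall plan is right, Part~(ii) is correct, and the $\RCAo$/$\ACAo$ bookkeeping is handled properly. But Part~(i) contains a genuine error: $\ATRo$ does \emph{not} prove $\Pi^1_1\hyp\TI$. Over $\ACAo$, $\Pi^1_1\hyp\TI$ is equivalent to $\Sigma^1_1\hyp\DC$, and $\Sigma^1_1\hyp\DC_0$ is strictly stronger than $\ATRo$; in particular there exist $\omega$-models of $\ATRo$ that fail $\Sigma^1_1\hyp\DC$ (see e.g.\ Simpson, Chapter~VIII.5). So a coded $\omega$-model witnessing $\beta^1_0\RFN(1;\ATRo)$ need not satisfy $\Pi^1_1\hyp\TI$, and your direct argument for~(i) does not go through.

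The fix is already implicit in your stated plan: route Part~(i) through $\TLPP$ as well. Theorem~\ref{Thm summarly} gives that $\ACAo + \beta^1_0\RFN(1;\ATRo)$ proves $\TLPP$, and the theorem just established gives $\TLPP \leftrightarrow \beta^1_0\RFN(1;\Pi^1_1\hyp\TI)$ over $\RCAo$. Since, as you observe, $\RCAo + \beta^1_0\RFN(1;\ATRo)$ proves $\ACAo$, this yields~(i). This is precisely the paper's one-line argument: the corollary is immediate from the equivalence $\TLPP \leftrightarrow \beta^1_0\RFN(1;\Pi^1_1\hyp\TI)$ combined with Theorem~\ref{Thm summarly}.
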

\begin{proof}
  It is immediate from the equivalence of $\beta^1_0\RFN(1;\Pi^1_1\hyp\TI_0)$ and $\TLPP$.
\end{proof}

We next show that both $\ALPP$ and the single pseudo hyperjump $\beta^1_0\RFN(1;\top)$ are characterized by the $\omega$-model reflection of full transfinite induction.
We note that this characterization is independently proved by Freund \cite{Freund}.

\begin{theorem}\label{thm InftyTI and RFN}
  Over $\RCAo$, the following assertions are equivalent.
  \begin{enumerate}
    \item $\ALPP$,
    \item $\beta^1_0\RFN(1)$,
    \item $\RFN(\Pi^1_{\infty}\hyp\TI)$,
    \item $\RFN(\mathrm{A}\Pi^1_1\hyp\TI)$ where $\mathrm{A}\Pi^1_1\hyp\TI_0$ is the class built from $\Pi^1_1$ with logical connectives and number quantifiers.
  \end{enumerate}
\end{theorem}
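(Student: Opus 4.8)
The plan is to prove the cycle $(1)\Rightarrow(2)\Rightarrow(3)\Rightarrow(4)\Rightarrow(1)$. The implication $(3)\Rightarrow(4)$ is immediate: since $\mathrm{A}\Pi^1_1$ is a subclass of $\Pi^1_\infty$, every coded $\omega$-model of $\Pi^1_\infty\hyp\TI_0+\ACAo$ is a coded $\omega$-model of $\mathrm{A}\Pi^1_1\hyp\TI_0+\ACAo$. Throughout I freely use the cited equivalence of $\Pi^1_{n+1}\hyp\RFN$ with $\Pi^1_n\hyp\TI$ over $\ACAo$ (so that, modulo $\ACAo$, $\Pi^1_\infty\hyp\TI_0$ coincides with the full reflection scheme $\Pi^1_\infty\hyp\RFN$), the tree functional $\Phi_T$, the equivalence of $\Sigma^0_k\LPP$ with ``every set lies in a $\Delta^0_k\beta$-model with a computable realizer'', and the fact that $\HJ(X)$ computes a realizable coded $\beta$-model containing $X$.

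For $(1)\Rightarrow(2)$: given $X$, the tree $\Phi_T(X)$ is ill-founded, with an $X$-computable path, and every path of it computes $X$. A $\Sigma^0_\omega$-leftmost path of $\Phi_T(X)$ is a fortiori $\Sigma^0_k$-leftmost for every $k$ --- a path is always $\Delta^0_1$ in itself, hence lies in the class over which it is required to be least --- so, feeding such a path into the $\Delta^0_k\beta$-model construction uniformly in $k$, one obtains a realizable jump ideal $\M\ni X$ whose realizer correctly decides, level by level, the $\Sigma^1_1(X)$-information built into $\Phi_T(X)$; in particular $\M\models\ACAo+\exists\HJ(X)$, which is $\beta^1_0\RFN(1)$.

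For $(4)\Rightarrow(1)$: let $T$ be an ill-founded tree and fix a path $p\in[T]$. Reflect into a coded $\omega$-model $\M$ containing $T$ and $p$ with $\M\models\mathrm{A}\Pi^1_1\hyp\TI_0+\ACAo$. Since $\mathrm{A}\Pi^1_1\hyp\TI_0$ includes $\Pi^1_1\hyp\TI_0$, hence $\Sigma^1_1\hyp\DC_0$ and in particular $\ATRo$, and since $\M$ sees that $T$ is ill-founded, $\M$ proves that $T$ has a leftmost path; thus $\M$ contains a path $f$ of $T$ which $\M$ believes is lex-least in $[T]$, and as the relevant conditions are arithmetic, $f$ really is $\leq$ every path of $T$ coded in $\M$. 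Being a jump ideal containing $T$ and $f$, $\M$ contains every $g\in[T]$ with $g\leq_{\T}\TJ^n(T\oplus f)$, hence every path of $T$ that is $\Sigma^0_\omega$-definable from $T\oplus f$; therefore $f$ is a $\Sigma^0_\omega$-leftmost path of $T$, and $\ALPP$ follows.

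The remaining implication $(2)\Rightarrow(3)$ is the heart of the theorem, and its main obstacle is that the $\omega$-fold iteration of $\omega$-model reflection is not available inside $\RCAo$: the sequence $\M_0,\M_1,\dots$ of successive reflecting models cannot be formed without a choice principle. What rescues the argument is that reflection is self-propagating: for a fixed arithmetic-in-$\Pi^1_1$ formula $\varphi$ and a parameter $Y$, the statement ``there is a coded $\omega$-model $\NN\ni Y$ with $\NN\models\ACAo+\varphi(Y)$'' is itself $\Sigma^1_1(Y)$ --- it asserts the existence of a set coding $\NN$ together with a partial truth valuation --- and the instances of the reflection scheme for $\mathrm{A}\Pi^1_1$ again lie in $\mathrm{A}\Pi^1_1$. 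Since a model of $\ACAo+\exists\HJ(X)$ equipped with a realizer decides $\Sigma^1_1(X)$-sentences in agreement with the ground model, a single application of $\beta^1_0\RFN(1)$ --- to a suitably coded set, or equivalently a single $\Sigma^0_\omega$-leftmost path of the appropriate tree --- already exhibits all the finite stages of the putative iteration at once. The delicate part, where most of the work goes, is to verify that the model obtained this way satisfies the whole scheme $\Pi^1_\infty\hyp\TI_0$, and not merely each finite fragment $\Pi^1_n\hyp\TI_0$ separately; granting this, $\RFN(\Pi^1_\infty\hyp\TI)$ holds and the cycle closes.
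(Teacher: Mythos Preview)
Your argument for $(4)\Rightarrow(1)$ contains a genuine error. You claim that since $\M\models\mathrm{A}\Pi^1_1\hyp\TI_0$ (hence $\Sigma^1_1\hyp\DC_0$, hence $\ATRo$), ``$\M$ proves that $T$ has a leftmost path,'' and therefore $\M$ contains some $f$ which $\M$ believes is lex-least in $[T]$. This is false: the leftmost path principle (every ill-founded tree has a leftmost path) is equivalent to $\PCAo$, and $\mathrm{A}\Pi^1_1\hyp\TI_0$ is strictly weaker than $\PCAo$. So $\M$ need not contain any path it believes is leftmost. The correct route (which the paper indicates by pointing to Simpson VII.2.18, VII.2.21, and which is spelled out in the $\Delta^0_k$ case in Lemma~\ref{Lem ref of TI implies LPP}) does not look for a leftmost path inside $\M$ at all. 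Instead one passes to the class $\NN$ of $\mathrm{A}\Pi^1_1$-definable-in-$\M$ sets, uses transfinite induction in $\M$ to show that $\M$ and $\NN$ agree on well-foundedness, forms the pruned subtree $S=\{\sigma\in T:\M\models\exists g\in[T]\,(\sigma\prec g)\}\in\NN$, and takes the (computable-from-$S$) leftmost path of $S$; this path is $\Sigma^0_\omega$-leftmost in $T$ because every arithmetic-in-it path of $T$ lies in $\NN$, hence in $[S]$.

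Your framing of $(2)\Rightarrow(3)$ as an ``$\omega$-fold iteration of $\omega$-model reflection'' that must somehow be collapsed is also off-track, and you do not actually supply the verification you flag as ``delicate.'' No iteration is involved. From $\M\models\ACAo+\exists\HJ(X)$, one uses $\HJ^{\M}(X)$ \emph{inside $\M$} to build a coded $\beta$-submodel $\NN\ni X$ (this is the construction you already cited). Now fix any $n$ and any $\Pi^1_n$ formula $\varphi$: if $W\in\NN$ and $\NN\models\WO(W)$, then by $\beta$-absoluteness $\M\models\WO(W)$; the set $A=\{i:\NN\models\lnot\varphi(i)\}$ exists in $\M$ (since $\M\models\ACAo$ and $\NN$ is coded in $\M$), and in $\ACAo$ well-foundedness gives a $W$-minimal element of $A$ if $A\neq\varnothing$, contradicting progressiveness. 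Hence $\NN\models\Pi^1_n\hyp\TI$ for every external $n$, i.e.\ $\NN\models\Pi^1_\infty\hyp\TI$. This is a single-step argument; there is no tower to build and no uniformity-in-$n$ obstacle, because the schema is verified instance by instance from outside.
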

\begin{proof}
  The equivalence of $(1)$ and $(2)$ is proved in \cite{suzuki_yokoyama_pi12}, and the equivalence of $(2)$ and $(3)$ is proved in \cite{Freund}.
  The implication of $(3)$ to $(4)$ is trivial.
  For the implication of $(4)$ to $(2)$, the same proof of the implication of $(3)$ to $(2)$ works. For this implication, see also \cite[Theorem VII.2.18 and Corollary VII.2.21]{Simpson}.
\end{proof}

The equivalence of $(2)$ and $(3)$ in the previous theorem can be extended to the $n$-th pseudo hyperjump level as follows.
\begin{theorem}\label{thm: n+1 equiv n + TI}
  Over $\RCAo$, $\beta^1_0\RFN(n+1)$ is equivalent to $\RFN(n;\Pi^1_{\infty}\hyp\TI)$ for any $n \in \omega$ such that $n > 0$.
\end{theorem}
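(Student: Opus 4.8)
Throughout, $\RFN(n;\sigma)$ denotes $\forall X\,\exists\M\,(X \in \M \land \M \models \ACAo + \sigma + \exists\HJ^n(X))$, so $\RFN(0;\sigma)$ is $\RFN(\sigma)$ and the case $n = 0$ of the statement is exactly the equivalence of $(2)$ and $(3)$ in Theorem~\ref{thm InftyTI and RFN}, due to Freund~\cite{Freund}. The plan is to reduce the general case to $n = 0$ by stacking reflections. Two facts are used throughout. (i) \emph{Absoluteness}: for a fixed coded $\omega$-model $\NN$ and a fixed sentence or schema $\sigma$ built from $\ACAo$, $\Pi^1_{\infty}\hyp\TI$ and $\exists\HJ^m(X)$ with $X \in \NN$, the assertion ``$\NN \models \sigma$'' is arithmetical in $\NN$, hence has the same truth value in the ground model and in any coded $\omega$-model containing $\NN$. (ii) \emph{Localized Freund construction}: over $\RCAo$, if $\M$ is a coded $\omega$-model with $\M \models \ACAo$, $Y \in \M$ and $\M \models \exists\HJ(Y)$, then inside $\M$ one builds from the pseudo-hyperjump of $Y$ a coded $\omega$-model $\NN \subseteq \M$ with $Y \in \NN$, $\M \models$ ``$\NN \models \ACAo + \Pi^1_{\infty}\hyp\TI$'', such that $\NN$ behaves from $\M$'s viewpoint like $\mathrm{HYP}(Y)$ and is $\Sigma^1_1$-correct relative to $\M$ on facts whose parameters are Turing below $Y$.

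\emph{Forward direction} ($\beta^1_0\RFN(n+1) \Rightarrow \RFN(n;\Pi^1_{\infty}\hyp\TI)$). Fix $X$ and reflect it into a coded $\omega$-model $\M$ with $X \in \M$ and $\M \models \ACAo + \exists\HJ^{n+1}(X)$. Inside $\M$ the chain $\HJ^1(X), \dots, \HJ^{n+1}(X)$ exists, so $\M \models \exists\HJ(H)$ for $H := \HJ^n(X)$, and the localized construction yields $\NN \subseteq \M$ with $H \in \NN$ and $\M \models$ ``$\NN \models \ACAo + \Pi^1_{\infty}\hyp\TI$''. Each pseudo-hyperjump computes its base, so in $\M$ we have $\HJ^i(X) \leq_{\T} H$ for $i \le n$, and since $\NN \models \ACAo$ all of $X, \HJ^1(X), \dots, \HJ^n(X)$ (as computed in $\M$) lie in $\NN$. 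For $i < n$ the identity ``$\HJ^{i+1}(X) = \HJ(\HJ^i(X))$'' holds in $\NN$: its easy half because $\NN \subseteq \M$, so any $\Sigma^1_1$-witness in $\NN$ already lies in $\M$; its other half because $\NN$ is $\Sigma^1_1$-correct relative to $\M$ on these parameters. Hence $\M \models$ ``$\NN \models \exists\HJ^n(X)$'', and by absoluteness $\NN$ really is a coded $\omega$-model of $\ACAo + \Pi^1_{\infty}\hyp\TI + \exists\HJ^n(X)$ containing $X$.

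\emph{Converse direction} ($\RFN(n;\Pi^1_{\infty}\hyp\TI) \Rightarrow \beta^1_0\RFN(n+1)$). I would again stack two reflections. Fix $X$, reflect it into $\M_1$ with $X \in \M_1$ and $\M_1 \models \ACAo + \Pi^1_{\infty}\hyp\TI + \exists\HJ^n(X)$, and fix the chain $H_1 := \HJ^1(X), \dots, H_n := \HJ^n(X)$ computed in $\M_1$ (so $H_i \leq_{\T} \M_1$; set $H_0 := X$). Reflect the set $\M_1$ into $\M_2$ with $\M_1 \in \M_2$ and $\M_2 \models \ACAo + \Pi^1_{\infty}\hyp\TI + \exists\HJ^n(\M_1)$, and work inside $\M_2$. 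The set
\[
  T := \{ (e,m) : \exists\, i\ \pi^0_2(e,m,H_n,(\M_1)_i) \}
\]
is arithmetical in $\M_1$ and so belongs to $\M_2$; it records exactly the $\Sigma^1_1$-facts about $H_n$ witnessed in $\M_1$. Since $n \ge 1$, the pseudo-hyperjump of $\M_1$ lies in $\M_2$ and computes the $\omega$-jump of $\M_1$, so $\M_2$ contains the jump ideal $\mathcal{Q}$ generated by $X \oplus H_1 \oplus \cdots \oplus H_n \oplus T$ together with, for each element of $H_1, \dots, H_n$ and of $T$, a corresponding $\Sigma^1_1$-witness found in $\M_1$. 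Then $\M_2 \models$ ``$\mathcal{Q} \models \ACAo$'' (a jump ideal models $\ACAo$); $X \in \mathcal{Q}$; and the included witnesses make $\mathcal{Q}$ confirm ``$H_{i+1} = \HJ(H_i)$'' for $i < n$ and ``$T = \HJ(H_n)$'', so, reading $X, H_1, \dots, H_n, T$ as the chain for $\HJ^{n+1}(X)$, we get $\M_2 \models$ ``$\mathcal{Q} \models \exists\HJ^{n+1}(X)$''. By absoluteness $\mathcal{Q}$ really witnesses $\beta^1_0\RFN(n+1)$ at $X$.

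\emph{Main obstacle.} The delicate point is confirming ``$H_{i+1} = \HJ(H_i)$'' in $\mathcal{Q}$: unlike in the forward direction, $\mathcal{Q}$ is not contained in $\M_1$ — it holds $T$ and all its Turing jumps — so one must rule out that some $\Sigma^1_1$-fact about $H_i$ witnessed inside $\mathcal{Q}$ but not in $\M_1$ falsifies that identity. This is precisely where $\Pi^1_{\infty}\hyp\TI$ in $\M_1$ enters: it forces $\M_1$ to be closed under enough iterated jumps that its pseudo-hyperjumps $H_i$ already capture all the ``low'' $\Sigma^1_1$-facts that sets Turing below the pseudo-hyperjump data — hence all sets of $\mathcal{Q}$ — can witness; $\Pi^1_{\infty}\hyp\TI$ in $\M_2$ plays the same role for $T$. (Dropping this hypothesis would make $\beta^1_0\RFN(n;\top)$ imply $\beta^1_0\RFN(n+1)$, contradicting Theorem~\ref{Thm summarly}.) The realizer machinery of Section~2 is what allows the chosen witnesses to be assembled into an actual set inside $\mathcal{Q}$. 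The forward direction avoids all this because a $\Sigma^1_1$-correct submodel automatically inherits the required pseudo-hyperjump identities.
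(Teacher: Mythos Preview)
Your forward direction is correct and is essentially what the paper does (the paper calls it ``trivial'' without elaborating, relying on the $n=0$ case): from $\M \models \ACAo + \exists\HJ^{n+1}(X)$ one takes $\NN = \mathrm{HYP}^{\M}(\HJ^n(X))$, which satisfies $\Pi^1_\infty\hyp\TI$ and, being a $\beta$-submodel of $\M$, inherits the identities $\HJ^{i+1}(X) = \HJ(\HJ^i(X))$ for $i<n$.

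Your converse is overcomplicated and the gap you flag is not actually closed. The paper needs only \emph{one} reflection: given $\M \models \ACAo + \Pi^1_\infty\hyp\TI + \exists\HJ^n(X)$ with $Y := \HJ^n(X)$ in $\M$, it forms the definable closure
\[
\M' \;=\; \bigl\{\, \{m : \M \models \varphi(m)\} \;:\; \varphi \text{ an $L_2$-formula with parameters from } \M \,\bigr\}.
\]
The key fact (Simpson VII.2.18/21) is that $\Pi^1_\infty\hyp\TI$ in $\M$ yields $\M \subseteq_\beta \M'$. Then $Z := \{m : \M \models \theta_{\HJ}(m,Y)\}$ lies in $\M'$ by definition, and the $\beta$-elementarity gives $\M' \models Z = \HJ(Y) \land Y = \HJ^n(X)$, so $\M' \models \exists\HJ^{n+1}(X)$.

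Your ``main obstacle'' is exactly the need for this $\beta$-elementarity, and your proposed resolution is not right as stated. Closure under iterated jumps is already given by $\ACAo$ in $\M_1$; that is not what $\Pi^1_\infty\hyp\TI$ contributes. What $\Pi^1_\infty\hyp\TI$ actually buys is $\M_1 \subseteq_\beta \M_1'$ for the definable closure $\M_1'$. Since every element of your $\mathcal{Q}$ (the sets $T$, the witnesses, and all their finite jumps) is arithmetical in $\M_1$, you have $\mathcal{Q} \subseteq \M_1'$, and then any $\Sigma^1_1$-fact about $H_i$ witnessed in $\mathcal{Q}$ is witnessed in $\M_1'$, hence in $\M_1$ by $\beta$-elementarity. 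That fills your gap --- but once you have $\M_1 \subseteq_\beta \M_1'$, the second reflection into $\M_2$, the handcrafted $\mathcal{Q}$, and the appeal to $\Pi^1_\infty\hyp\TI$ in $\M_2$ (which is a red herring: the correctness of $T = \HJ(H_n)$ in $\mathcal{Q}$ again reduces to $\M_1 \subseteq_\beta \M_1'$, not to anything about $\M_2$) are all unnecessary. The realizer machinery of Section~2 plays no role here either.
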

\begin{proof}
  Since the same proof for the previous theorem works, we give a brief proof. The implication from $\beta^1_0\RFN(n+1)$ to $\RFN(n;\Pi^1_{\infty}\hyp\TI)$ is trivial.
  For proving $\beta^1_0\RFN(n+1)$ from $\RFN(n;\Pi^1_{\infty}\hyp\TI)$, take an $X$ and 
  an $\omega$-model $\M$ such that $X \in \M \land \M \models (\exists \HJ^n(X) \land \Pi^1_{\infty}\hyp\TI)$. Let $Y \in \M$ such that $\M \models Y = \HJ^n(X)$. Let $\M'$ be the collection of the definable-in-$\M$ subclasses of $\N$. Then, $\M \subseteq_{\beta} \M'$. Moreover, 
there is a set $Z \in \M'$ such that $Z = \{n : \M \models \theta_{\HJ}(n,Y)\}$ where $\theta_{\HJ}$ is the $\Sigma^1_1$ formula defining hyperjumps. Then, $\M' \models Z = \HJ(Y) \land Y = \HJ^n(X)$, hence $\M' \models Z = \HJ^{n+1}(X)$. This completes the proof.
\end{proof}

Combining Corollary \ref{Cor: pseudo Pi^1_1-TI < pseudo ATR} and the previous theorem, we have 
\begin{corollary}
  Over $\RCAo$,
  \begin{align*}
    \beta^1_0\RFN(1;\top) < \beta^1_0\RFN(1;\Pi^1_1\hyp\TI) < \beta^1_0(1;\Sigma^1_1\hyp\TI) < \beta^1_0\RFN(1;\Pi^1_{\infty}\hyp\TI).
  \end{align*}
\end{corollary}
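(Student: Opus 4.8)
The plan is to derive the four-term chain by feeding into Theorem \ref{Thm summarly} the identifications already proved in this section, and to handle the monotonicity implications and the three consistency-strength separations separately. Throughout, note that the operator $\sigma \mapsto \beta^1_0\RFN(1;\sigma)$ is monotone: if $\ACAo + \sigma$ proves $\tau$, then by soundness any coded $\omega$-model of $\ACAo + \sigma + \exists\HJ(X)$ is also a coded $\omega$-model of $\ACAo + \tau + \exists\HJ(X)$, so $\beta^1_0\RFN(1;\sigma)$ proves $\beta^1_0\RFN(1;\tau)$. Using this with the trivial inclusions $\Pi^1_1\hyp\TI \vdash \top$ and $\Sigma^1_1\hyp\TI \subseteq \Pi^1_\infty\hyp\TI$, with $\Sigma^1_1\hyp\TI \vdash \ATRo$, and with Corollary \ref{Cor: pseudo Pi^1_1-TI < pseudo ATR} (which in particular gives that $\beta^1_0\RFN(1;\ATRo)$ proves $\beta^1_0\RFN(1;\Pi^1_1\hyp\TI)$), one obtains
\begin{align*}
  \beta^1_0\RFN(1;\top) \le \beta^1_0\RFN(1;\Pi^1_1\hyp\TI) \le \beta^1_0\RFN(1;\ATRo) \le \beta^1_0\RFN(1;\Sigma^1_1\hyp\TI) \le \beta^1_0\RFN(1;\Pi^1_\infty\hyp\TI).
\end{align*}
Since every theory occurring here proves $\ACAo$, there is no discrepancy between working over $\RCAo$ and over $\ACAo$.

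It then remains to check the three strict separations. For the first, the equivalence $\TLPP \equiv \beta^1_0\RFN(1;\Pi^1_1\hyp\TI)$ proved above turns $\beta^1_0\RFN(1;\top) < \TLPP$ of Theorem \ref{Thm summarly} directly into $\beta^1_0\RFN(1;\top) < \beta^1_0\RFN(1;\Pi^1_1\hyp\TI)$. For the second, Corollary \ref{Cor: pseudo Pi^1_1-TI < pseudo ATR} already gives that $\beta^1_0\RFN(1;\ATRo)$ proves both $\beta^1_0\RFN(1;\Pi^1_1\hyp\TI)$ and its consistency; composing with the inclusion $\beta^1_0\RFN(1;\ATRo) \le \beta^1_0\RFN(1;\Sigma^1_1\hyp\TI)$ from the previous paragraph and applying the Proposition at the start of Section 2, we get $\beta^1_0\RFN(1;\Pi^1_1\hyp\TI) < \beta^1_0\RFN(1;\Sigma^1_1\hyp\TI)$. (If one takes $\ATRo$ and $\Sigma^1_1\hyp\TI$ to coincide at the level of these pseudo-hyperjump reflections, both of these, together with the third separation below, are immediate from Theorem \ref{Thm summarly} and the identification $\beta^1_0\RFN(1;\Pi^1_\infty\hyp\TI)\equiv\beta^1_0\RFN(2;\top)$.)

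The third separation, $\beta^1_0\RFN(1;\Sigma^1_1\hyp\TI) < \beta^1_0\RFN(1;\Pi^1_\infty\hyp\TI)$, is the delicate step, since $\Sigma^1_1\hyp\TI$ is proof-theoretically stronger than $\ATRo$ and so this does not follow immediately from $\beta^1_0\RFN(1;\ATRo) < \beta^1_0\RFN(2;\top)$ of Theorem \ref{Thm summarly}. Here I would first invoke Theorem \ref{thm: n+1 equiv n + TI} with $n=1$ to rewrite $\beta^1_0\RFN(1;\Pi^1_\infty\hyp\TI)$ as $\beta^1_0\RFN(2;\top)$, and then prove that $\beta^1_0\RFN(2;\top)$ proves the consistency of $\RCAo + \beta^1_0\RFN(1;\Sigma^1_1\hyp\TI)$. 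Given a set $X$, $\beta^1_0\RFN(2;\top)$ provides a coded $\omega$-model $\M \ni X$ with $\M \models \ACAo + \exists\HJ^2(X)$; working inside $\M$, the second iterate $\HJ^2(X) = \HJ(\HJ(X))$ lets one build a coded $\beta$-model $\mathcal{N}$ (with a realizer) containing $\HJ(X)$ and whose $\Sigma^1_1$-elementarity in $\M$ can be verified in $\M$, the point being that the witnesses needed for $\Sigma^1_1$-correctness are already computed by $\HJ^2(X)$. For such a genuinely $\Sigma^1_1$-correct $\mathcal{N}$ inside a model of $\ACAo$ one checks directly that $\mathcal{N} \models \Sigma^1_1\hyp\TI$: given a transfinite-induction instance along a well-order $\alpha \in \mathcal{N}$ with $\mathcal{N} \models$ ``$\alpha$ is a well-order'', the class of $\mathcal{N}$-counterexamples is arithmetic in the code $\mathcal{N}$, hence an element of $\M$; since $\mathcal{N}$ is $\Sigma^1_1$-elementary in $\M$, $\M$ too regards $\alpha$ as a well-order, so this set has an $\alpha$-least element in $\M$, and that element falsifies the induction hypothesis in $\mathcal{N}$, a contradiction. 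Iterating this inside $\M$ (or passing to the subclasses definable over $\M$, as in the proof of Theorem \ref{thm: n+1 equiv n + TI}) yields a coded $\omega$-model of $\RCAo + \beta^1_0\RFN(1;\Sigma^1_1\hyp\TI)$, hence its consistency, and with the monotonicity inclusion this gives the third separation.

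The main obstacle is exactly the claim that at the level of the second pseudo-hyperjump the internally-built $\beta$-model is provably $\Sigma^1_1$-correct and sufficiently closed to model $\beta^1_0\RFN(1;\Sigma^1_1\hyp\TI)$: with only one hyperjump this fails (already $\beta^1_0\RFN(1;\top)$ does not prove $\beta^1_0\RFN(1;\Pi^1_1\hyp\TI)$), so the argument must carefully account for how much $\Sigma^1_1$-correctness is actually furnished by $\HJ^2(X)$ and for which parameters it is available; I expect this bookkeeping to parallel, and to be subsumed by, the constructions already used in the proofs of Theorems \ref{thm InftyTI and RFN} and \ref{thm: n+1 equiv n + TI}.
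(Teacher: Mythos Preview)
Your parenthetical remark is not a side option; it is precisely the paper's proof, and committing to it dissolves everything you flag as ``delicate''. The point you are hesitating over is this: a \emph{coded} $\omega$-model has the standard natural numbers as its first-order part, so it automatically satisfies full second-order induction, in particular $\Sigma^1_1\hyp\IND$. Since $\ATRo$ and $\Sigma^1_1\hyp\TI$ are equivalent over $\ACAo + \Sigma^1_1\hyp\IND$, any coded $\omega$-model of $\ACAo$ satisfies $\ATRo$ iff it satisfies $\Sigma^1_1\hyp\TI$. Hence $\beta^1_0\RFN(1;\Sigma^1_1\hyp\TI)$ and $\beta^1_0\RFN(1;\ATRo)$ are the \emph{same} statement over $\RCAo$, and the whole chain is then read off directly from Theorem~\ref{Thm summarly}, Corollary~\ref{Cor: pseudo Pi^1_1-TI < pseudo ATR}, and Theorem~\ref{thm: n+1 equiv n + TI}. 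Your worry that ``$\Sigma^1_1\hyp\TI$ is proof-theoretically stronger than $\ATRo$'' is a red herring here: that gap exists only in the absence of $\Sigma^1_1\hyp\IND$, which is never the case inside a coded $\omega$-model.

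By contrast, your longer argument for the third separation has a genuine gap at the end. From $\beta^1_0\RFN(2;\top)$ you obtain, for a given $X$, a coded $\M \models \ACAo + \exists\HJ^2(X)$; inside $\M$ you can indeed build a coded $\beta$-submodel $\mathcal{N}\ni \HJ(X)$ and verify $\mathcal{N}\models \Sigma^1_1\hyp\TI$. But this only re-proves $\beta^1_0\RFN(2;\top)\to\beta^1_0\RFN(1;\Sigma^1_1\hyp\TI)$, which you already had by monotonicity. The step ``iterating this inside $\M$ \dots\ yields a coded $\omega$-model of $\beta^1_0\RFN(1;\Sigma^1_1\hyp\TI)$'' is not justified: $\M$ only contains $\HJ^2$ of the \emph{one} set $X$ you started with, not of an arbitrary $Y\in\M$, so there is no reason $\M$ itself (or its definable expansion) should model the $\forall Y$ statement $\beta^1_0\RFN(1;\Sigma^1_1\hyp\TI)$. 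Once you accept the identification $\beta^1_0\RFN(1;\Sigma^1_1\hyp\TI)\equiv\beta^1_0\RFN(1;\ATRo)$, the required consistency is simply the inequality $\beta^1_0\RFN(1;\ATRo) < \beta^1_0\RFN(2;\top)$ already recorded in Theorem~\ref{Thm summarly}, and no further construction is needed.
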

\begin{proof}
  We first remark that any of the above statements includes $\ACAo$. 
  We also note that $\beta^1_0(1;\Sigma^1_1\hyp\TI)$ is equivalent to $\beta^1_0(1;\ATRo)$ because $\Sigma^1_1\hyp\TI$ and $\ATRo$ are equivalent over $\Sigma^1_1$-induction that is true in any coded $\omega$-model.
  Thus, the inequality $\beta^1_0\RFN(1;\top) < \beta^1_0\RFN(1;\Pi^1_1\hyp\TI) < \beta^1_0(1;\Sigma^1_1\hyp\TI)$ follows from Theorem \ref{Thm summarly} and Corollary \ref{Cor: pseudo Pi^1_1-TI < pseudo ATR}. The inequality $\beta^1_0(1;\Sigma^1_1\hyp\TI) < \beta^1_0\RFN(1;\Pi^1_{\infty}\hyp\TI)$ follows from Theorem \ref{Thm summarly} and Theorem \ref{thm: n+1 equiv n + TI}.
\end{proof}

We then show that the equivalence in Theorem \ref{thm InftyTI and RFN} also holds at the level of $\Sigma^0_k\LPP$. 
For proving the equivalence of $\Sigma^0_k\LPP$ and $\RFN(\Pi^1_{k+1}\hyp\TI)$, we use $\Delta^0_k\beta$-models with computable realizers.

\begin{definition}
  Let $n \in \omega$ such that $n > 0$. We define $\Ef\Delta^0_n\beta\hyp\RFN$ as the assertion that 
  any set $X$ is contained in a $\Delta^0_n\beta$-model $\M$ with total computable functions $f_{\TJ},f_{\oplus},f_{\Sigma^1_1}$ such that 
  $(f_{\TJ},f_{\oplus}) \forces_{\M} \ACAo$ and 
  $f_{\Sigma^1_1} \forces_{\M} \exists X \Pi^0_2$.
\end{definition}

\begin{lemma}\label{Lem LPP implies betaRef}
 For $k \in \omega$ such that $k > 0$, $\Sigma^0_{k+1}\LPP$ implies $\Ef\Delta^0_{k+1}\beta\hyp\RFN$ over $\RCAo$.
\end{lemma}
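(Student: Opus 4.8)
The plan is to build, from a given set $X$, an ill-founded tree whose leftmost-ish path codes enough information to recover a $\Delta^0_{k+1}\beta$-model containing $X$ together with its realizers. First I would recall the tree $\Phi_T(X)$ from the lemma above: each path of $\Phi_T(X)$ computes $X$, and the actual leftmost path computes $\HJ(X)$. I would iterate this construction: having produced (via $\Phi_T$) a tree $T_0 = \Phi_T(X)$, I would search for a tree $T$ such that, morally, a $\Delta^0_{k+1}$-leftmost path of $T$ codes a whole jump ideal closed under hyperjump relative to $X$, together with its members listed in such a way that the $\exists X\Pi^0_2$-realizer $f_{\Sigma^1_1}$ and the $\ACAo$-realizers $f_{\TJ},f_{\oplus}$ become computable from the path. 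The key point is the asymmetry between $\Sigma^0_{k+1}$-definability and the $\beta$-property: a $\Delta^0_{k+1}$-leftmost path $f$ is least among paths $g \leq_{\T} \TJ^{k}(T\oplus f)$, so if we arrange the tree so that the canonical $\Sigma^1_1$-witnessing path is $\Delta^0_{k+1}$ in $f$ itself, minimality forces $f$ to actually pick up the hyperjump-style information — exactly the content of $\Delta^0_{k+1}\beta$ together with a computable $f_{\Sigma^1_1}$.

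Concretely, the key steps, in order, would be: (1) fix a $\Delta^0_1$ coding scheme that, given a function $f$, reads off a sequence $\langle M_i\rangle_i$ of sets and three functions $f_{\TJ},f_{\oplus},f_{\Sigma^1_1}$ all computable from $f$ uniformly; (2) define an ill-founded tree $T = T_X$ whose paths are exactly those $f$ coding sequences $\langle M_i\rangle$ containing $X$, closed under $\oplus$ and $\TJ$ witnessed by $f_{\oplus},f_{\TJ}$, and in which $f_{\Sigma^1_1}$ gives, for every index, a correct $\Pi^0_2$-witness set appearing in the sequence — with the crucial clause that along the tree these witnesses are the \emph{lexicographically least} available ones, so that ``$f$ is least among its $\Delta^0_{k+1}(\TJ^k(T\oplus f))$-paths'' propagates downward into ``each $\exists X\Pi^0_2$ truth in $\M$ is witnessed by the $\TJ^k(\M)$-least, hence by some $M_i$''; (3) check $T$ is ill-founded in $\RCAo$, using that $\Phi_T$-style constructions produce a computable-in-$X$ branch and that one can, inside the tree, keep appending jumps and hyperjumps of already-listed sets (this is where Proposition on $\Pi^0_n$ truth valuations is invoked to see the jump ideal so built is genuinely an $\omega$-model of $\ACAo$ once a realizer is present); (4) apply $\Sigma^0_{k+1}\LPP$ (equivalently $\Delta^0_{k+1}\LPP$ by the Lemma above) to get a $\Delta^0_{k+1}$-leftmost path $f$; (5) verify that the model $\M$ and realizers $f_{\TJ},f_{\oplus},f_{\Sigma^1_1}$ decoded from $f$ satisfy: $\M$ is a jump ideal realized by $f_{\TJ},f_{\oplus}$, $X\in\M$, $f_{\Sigma^1_1}\forces_\M \exists X\Pi^0_2$, and — using leftmostness — $\M$ is a $\Delta^0_{k+1}\beta$-model, i.e.\ any $\Pi^0_2$ formula with $\M$-parameters having a witness $\leq_{\T}\TJ^k(\M)$ has a witness in $\M$.

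The main obstacle I expect is step (5), specifically proving the $\Delta^0_{k+1}\beta$-property from minimality of the path. The subtle issue is matching the quantifier complexity: a witness $Y \leq_{\T} \TJ^k(\M)$ to a $\Pi^0_2$ formula is recursive in a \emph{finite join} of members of $\M$ jumped $k$ times, but the lexicographic minimization only controls $f$ against paths that are $\Delta^0_{k+1}$ in $T\oplus f$ — so the tree must be engineered so that, whenever such a $Y$ exists, a path agreeing with $f$ up to a point and then branching to encode $Y$ is itself $\Delta^0_{k+1}(\TJ^k(T\oplus f))$ and lexicographically smaller unless $Y$ was already in the listing. Getting this encoding to be uniformly $\Delta^0_{k+1}$ (and not accidentally $\Sigma^0_{k+2}$) while still being forced lexicographically below $f$ is the delicate combinatorial core; it is essentially the $\Delta^0_{k+1}$-level refinement of the classical ``hyperjump builds a $\beta$-model'' argument, and I would handle it by making the tree branch on approximations to $Y$ in order of a fixed $\TJ^k$-computable reduction, exactly as in the $\TLPP$ case of \cite{suzuki_yokoyama_pi12} but truncated at the $k$-th jump. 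The remaining verifications (ill-foundedness, realizer correctness, $X\in\M$) should be routine bookkeeping in $\RCAo$ given the Proposition on $\Pi^0_n$ truth valuations.
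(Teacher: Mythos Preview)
Your proposal takes a route that is genuinely different from the paper's, and the difficulty you flag in step~(5) is more serious than you acknowledge. The paper does not build a single elaborate tree whose paths encode models-with-realizers. Instead it applies $\Sigma^0_{k+1}\LPP$ to the \emph{simple} tree $\Phi_T(X)$, obtaining a $\Delta^0_{k+1}$-leftmost path $f$; it then sets $\M = \{A : A \leq_{\T} \TJ^{k}(\Phi_T(X)\oplus f)\}$, observes that inside $\M$ the path $f$ is a genuine leftmost path (so $\HJ^{\M}(X) \leq_{\T} f$ exists), and finally runs Simpson's construction \cite[VII.2.9]{Simpson} \emph{inside} $\M$ to obtain a sequence $g \leq_{\T} \HJ^{\M}(X) \leq_{\T} f$ whose columns $\langle g_n\rangle_n$ form the desired jump ideal, with the realizers $f_{\TJ},f_{\oplus},f_{\Sigma^1_1}$ computable simply because of the explicit indexing built into Simpson's construction. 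The $\Delta^0_{k+1}\beta$-property of $\langle g_n\rangle_n$ is then almost free: since $g \leq_{\T} f$, any $Y \leq_{\T} \TJ^{k}(\langle g_n\rangle_n)$ already lies in $\M$, and $\langle g_n\rangle_n$ is $\exists h\,\Pi^0_1$-absolute with $\M$ by construction.

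Your single-tree approach runs into trouble at exactly the point you identify. To contradict leftmostness when some $\Pi^0_2$ witness $Y \leq_{\T} \TJ^{k}(\M)$ is missing from $\M$, you need a competing path $f'$ that (i) is lexicographically below $f$, (ii) satisfies $f' \leq_{\T} \TJ^{k}(T\oplus f)$, and (iii) is still a legal path of your tree. But your tree requires legal paths to encode full jump ideals together with correct realizers, so $f'$ must encode not just $Y$ but $\TJ^{m}(Y)$ for every $m$, along with correct $\exists X\Pi^0_2$-witnesses for the enlarged model; there is no reason these should all sit below $\TJ^{k}(f)$. ``Truncating at the $k$-th jump'' does not help, because the tree-membership condition itself demands closure under all finite jumps. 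A related problem afflicts step~(3): a path through your tree is, by design, a jump ideal equipped with a correct $\exists X\Pi^0_2$-realizer, and exhibiting such an object in $\RCAo$ (to show the tree is ill-founded) already requires something like the Kleene-basis construction, which in turn needs a hyperjump-type oracle that $\RCAo$ does not supply. The paper sidesteps both issues by separating ``obtain hyperjump-like power via $\LPP$ on $\Phi_T(X)$'' from ``build the realized model via the Kleene basis,'' performing the latter only after the former has made $\HJ^{\M}(X)$ available.
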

\begin{proof}
  We first note that $\Sigma^0_{k+1}\LPP$ includes $\ACA_0^+$. Thus, we may assume that any coded $\omega$-model has a total valuation.

  Let $X$ be a set. Take a $\Delta^0_{k+1}$-leftmost path $f$ of $\Phi_T(X)$.
  Define an $\omega$-model $\M = \Delta^0_{k+1}(\Phi_T(X) \oplus f) = \{A : A \leq_{\T} \TJ^{k}(\Phi_T(X) \oplus f)\}$.
  Then, $\M \models \RCAo +  \LP(\Phi_T(X),f)$.
  Therefore, $\HJ^{\M}(X) \leq_{\T} f$ exists.

  We perform the construction of \cite[VII.2.9 (19)]{Simpson} within $\M$.
  Then, we have a function $g \leq_{\T} \HJ^{\M}(X) \leq_{\T} f$ having the following condition.
  \begin{align*}
    \M \models \forall e,m (\exists h \in \N^{\N} \pi^0_1(e,m,(g)_{<\langle e,m \rangle},h) \to \pi^0_1(e,m,(g)_{<\langle e,m \rangle},g_{\langle e,m \rangle})).
  \end{align*}
  Here, $g_n$ denotes the $n$-th segment of $g$, that is, $g_{n}(x) = g(\langle n,x\rangle)$.
  Thus, $g$ is identified the sequence $\langle g_n \rangle_n$.
  In addition, $g_{<n}$ denotes the sequence $\langle g_0 \ldots, g_{n-1} \rangle$.
   
  We show that the sequence $\langle g_n \rangle_n$ is closed under Turing jumps. That is, for any $n$, there exists $m$ such that $g_m = \TJ(g_n)$.
  To see this, take an $n$. Then, $g_n \leq_{\T} g \leq_{\T} f$ by definition.
  Thus, $\TJ(g_n) \leq_{\T} \TJ(f)$ and hence $\TJ(g_n) \in \M$.
  Since $\exists h(h = \TJ(g_n))$ is of the form $\exists h' \in \N^{\N} \Pi^0_1$, we have $g_{m_n} = \TJ(g_n)$ for some $m_n$.
  Moreover, the correspondence $f_{\TJ} : n \mapsto m_n$ is clearly computable.
  By a similar way, we have that there exists a computable correspondence $f_{\oplus} : (n,n') \mapsto m'(n,n')$ such that
  $g_{m'(n,n')} = g_n \oplus g_n'$. It is also proved by a similar way that $\langle g_n \rangle_n$ is closed under the Turing reduction.
  Therefore, $\langle g_n \rangle_n$ is a jump ideal realized by computable functions $f_{\TJ}$ and $f_{\oplus}$.
  In addition, $\langle g_n \rangle_n$ and $\M$ are clearly absolute for $\exists h \in \N^{\N}\Pi^0_1$ formulas.

  We show that $\langle g_n \rangle_n$ is a $\Delta^0_{k+1}\beta$-model.
  Let $e,x,m \in \N$ and $h \leq_{\T} \TJ^{k}(\langle g_n \rangle_n)$ be such that  $\pi^0_1(e,x,g_m,h)$.
  Then, $h \leq_{\T} \TJ^{k}(\Phi_T(X) \oplus f)$ because $ \langle g_n\rangle_n \leq_{\T}  f$.
  Hence, $h \in \M$. Thus we have
  $\M \models \exists h \pi^0_1(e,x,g_n,h)$.
  By the construction of $\langle g_n \rangle_n$, $\langle g_n \rangle_N \models \pi^0_1(e,x,g_m,g_{m'})$ for some $m'$.
  Consequently, we have that $\langle g_n \rangle_n$ is a $\Delta^0_{k+1}\beta$-model.

  We then show that $\langle g_n \rangle_n$ has a computable realizer for $\exists h \in \N^{\N} \Pi^0_1$ formulas.
  We note that 
  \begin{align*}
   \langle g_n \rangle_n \models \exists h \pi^0_1(e,m,g_{<\langle e,m \rangle},h) \text{ if and only if }
  \langle g_n \rangle_n \models \pi^0_1(e,m,g_{<\langle e,m \rangle},g_{\langle e,m \rangle})
  \end{align*}
  holds.
  Hence, to decide either $\langle g_n \rangle_n \models \exists h \pi^0_1(e,m,g_{<\langle e,m \rangle},h)$ or not, it is enough
  to check either $\langle g_n \rangle_n \models \pi^0_1(e,m,g_{<\langle e,m \rangle},g_{\langle e,m \rangle})$ or not.
  Since $\langle g_n \rangle_n$ is a realizable jump ideal, it has a computable truth valuation for $\Pi^0_1$ formulas. Thus, we have a computable function which decides either $\langle g_n \rangle_n \models \pi^0_1(e,m,g_{<\langle e,m \rangle},g_{\langle e,m \rangle})$ or not. This completes the proof.
\end{proof}

\begin{lemma}\label{Lem definability}
  Let $\M$ be a $\Delta^0_k\beta$-model and $f_{\TJ},f_{\oplus},f_{\Sigma^1_1}$ be computable functions such that 
  $(f_{\TJ},f_{\oplus}) \forces_{\M}  \ACAo$ and 
  $f_{\Sigma^1_1} \forces_{\M} \exists X \Pi^0_2$.
  Then, for any arithmetical formula $\varphi(n,X,Y)$ and $n,i \in \N$,
  the set $\{n : \M \models \exists Y\varphi(n,\M_i,Y)\}$ is $\M$-computable.
\end{lemma}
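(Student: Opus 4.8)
The plan is to peel off the arithmetical matrix of $\varphi$ using the jump‑ideal structure of $\M$, reduce the question to a single $\exists X\,\Pi^0_2$ (equivalently $\Sigma^1_1(\Pi^0_2)$) statement about a parameter already in $\M$, and then read the answer off uniformly in $n$ from the realizer $f_{\Sigma^1_1}$ together with the $\Pi^0_2$ truth valuation supplied by the proposition above on realizable jump ideals (applied with exponent $2$).

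First I would reduce to the case that $\varphi$ is a $\Pi^0_m$ formula for some fixed standard $m\geq 2$; this costs nothing since $\varphi$ is arithmetical. I would then build, primitive‑recursively in $\GN{\varphi}$, a $\Pi^0_2$ formula $\psi(n,X,W)$ such that $\RCAo$ proves, for our jump ideal $\M$ and all $n$,
\[
  \exists j\, \varphi(n,\M_i,\M_j) \;\leftrightarrow\; \exists j'\, \psi(n,\M_i,\M_{j'}).
\]
Concretely, $\psi(n,X,W)$ says that $W$ codes a tuple $\langle W_0,\dots,W_m\rangle$ with $W_1=\TJ(X\oplus W_0)$, $W_{l+1}=\TJ(W_l)$ for $1\leq l<m$, and $p(\GN{\varphi},n)\in W_m$, where $p$ is the primitive recursive function from the proof of the above proposition (so that, whenever $\TJ^m(X\oplus W_0)$ exists, $\varphi(n,X,W_0)\leftrightarrow p(\GN{\varphi},n)\in\TJ^m(X\oplus W_0)$). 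Each clause ``$A=\TJ(B)$'' is $\Pi^0_2$ and the last clause is $\Delta^0_1$, so $\psi$ is $\Pi^0_2$. For the left‑to‑right implication: given $j$ with $\varphi(n,\M_i,\M_j)$, the tuple $\langle \M_j,\TJ(\M_i\oplus\M_j),\dots,\TJ^m(\M_i\oplus\M_j)\rangle$ belongs to $\M$ because $\M$ is a jump ideal (closed under $\TJ$, $\oplus$ and finite joins), and it witnesses the right‑hand side by the choice of $p$. For the converse: from a witness $W=\M_{j'}$ we obtain $\varphi(n,\M_i,W_0)$ by the property of $p$ (here $\TJ^m(\M_i\oplus W_0)=W_m$ exists), and $W_0\leq_{\T}\M_{j'}\in\M$ forces $W_0=\M_j$ for some $j$ by Turing closure of $\M$.

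Next I would pass to the satisfaction relation. Since $\M$ is realizable it has truth valuations for arithmetical formulas by the above proposition, so $\exists j\, \varphi(n,\M_i,\M_j)$ is exactly ``$\M\models\exists Y\,\varphi(n,\M_i,Y)$'' and $\exists j'\, \psi(n,\M_i,\M_{j'})$ is exactly ``$\M\models\exists W\,\psi(n,\M_i,W)$'', the latter unfolding (again by realizability, now with exponent $2$) to $\exists j'\, \pi^0_2(\GN{\psi},n,\M_i,\M_{j'})$. Applying the realizer $f_{\Sigma^1_1}\forces_{\M}\exists X\,\Pi^0_2$ collapses this existential quantifier:
\[
  \M\models\exists Y\,\varphi(n,\M_i,Y) \;\leftrightarrow\; \pi^0_2\bigl(\GN{\psi},n,\M_i,\M_{f_{\Sigma^1_1}(\GN{\psi},n,i)}\bigr).
\]
Finally, by the above proposition with exponent $2$ the map $(e,x,a,b)\mapsto\pi^0_2(e,x,\M_a,\M_b)$ is computable from $\M\oplus f_{\TJ}\oplus f_{\oplus}$; composing it with the (total computable) functions $f_{\Sigma^1_1}$, $f_{\TJ}$, $f_{\oplus}$ shows that $n\mapsto\pi^0_2(\GN{\psi},n,\M_i,\M_{f_{\Sigma^1_1}(\GN{\psi},n,i)})$ is $\M$‑computable, and hence so is $\{n:\M\models\exists Y\,\varphi(n,\M_i,Y)\}$.

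I expect the first step to be the main obstacle: defining $\psi$ and verifying the displayed equivalence with care, in particular invoking the $p$‑reduction only when the relevant iterated jump provably exists (which it does here, being a named component of a set living inside the jump ideal $\M$), and keeping the three notions of truth — genuine truth, the $\M$‑valuation, and the reading of $\M\models(\cdot)$ for a $\Sigma^1_1$ sentence — properly aligned, which is legitimate precisely because $\M$ is a \emph{realizable} jump ideal rather than an arbitrary one. The remaining steps are bookkeeping with universal formulas and the already‑established behaviour of realizers.
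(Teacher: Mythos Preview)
Your proposal is correct and follows essentially the same route as the paper: reduce $\exists Y\,\varphi$ to a statement of the form $\exists Y'\,\psi$ with $\psi\in\Pi^0_2$, apply $f_{\Sigma^1_1}$ to collapse the set existential to a fixed index, and then use the $\Pi^0_2$ truth valuation for realizable jump ideals to see that the resulting predicate is $\M$-computable. The paper compresses your first step into a single line (``we may assume that $\varphi$ is $\Pi^0_2$ because $\M$ is a model of $\ACAo$''), whereas you spell it out explicitly by packaging the iterated jumps $\TJ^l(\M_i\oplus W_0)$ into the witness $W$; this is a bit more verbose but amounts to the same normal-form reduction.
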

\begin{proof}
  We may assume that $\varphi$ is $\Pi^0_2$ because $\M$ is a model of $\ACAo$.

  Let $e$ be a code of $\varphi$.
  Since $f_{\Sigma^1_1} \forces_{\M} \exists X \Pi^0_2$, 
  $\{n : \M \models \exists Y \varphi(n,\M_i,Y)\} = \{n : \varphi(n,\M_i,\M_{f_{\Sigma^1_1}(e,n,i)))}\}$.
  Since $\M$ is a realizable jump ideal, there is a $(\M \oplus f_{\TJ} \oplus f_{\oplus})$-computable truth valuation for $\Pi^0_2$ formulas. 
  Thus, $\{n: \varphi(n,\M_i,\M_{f_{\Sigma^1_1}(e,n,i)})\}$ is computable from $\M  \oplus f_{\TJ} \oplus f_{\oplus} \oplus f_{\Sigma^1_1}$. Since the realizers are computable, $\{n : \varphi(n,\M_i,\M_{f_{\Sigma^1_1}(e,n,i)})\}$ is $\M$-computable.
\end{proof}

\begin{lemma}
  Let $k \in \omega$ such that $k > 0$.
  Let $\M$ be a $\Delta^0_k\beta$-model having computable realizers $f_{\TJ},f_{\oplus},f_{\Sigma^1_1}$.
  Then, for any $\Sigma^1_{k+1}$ formula $\varphi(n,X)$, the set $\{n: \M \models \varphi(n,\M_i)\}$ is $\Sigma^0_{k}$-definable from $\M$ and $\{n : \M \models \lnot \varphi(n,\M_i)\}$ is $\Pi^0_k$-definable from $\M$.
\end{lemma}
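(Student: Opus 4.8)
The plan is to prove, by induction on $m$ for $1 \le m \le k$, the uniform statement: for every $\Sigma^1_{m+1}$ formula $\varphi(n,X)$ the set $\{\langle n,i\rangle : \M\models\varphi(n,\M_i)\}$ is $\Sigma^0_m$-definable from $\M$ and its complement is $\Pi^0_m$-definable from $\M$. Read off at a fixed index $i$, the case $m=k$ is the lemma. Keeping the parameter index $i$ free is what makes the induction go through. I will use one standing convention: whenever a subformula with two set parameters $\M_i,\M_j$ arises, I replace it by a one-parameter formula evaluated at $\M_{f_\oplus(i,j)}$; this is legitimate because $\M$, being a realizable jump ideal, is a model of $\ACAo$, and it does not affect $\Sigma^0_m/\Pi^0_m$-definability from $\M$ because $f_\oplus$ is computable.

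For the base case $m=1$ one writes $\varphi(n,X)\equiv\exists Z\,\psi(n,X,Z)$ with $\psi\in\Pi^1_1$. By the satisfaction clause for coded $\omega$-models (\cite{Simpson}), $\M\models\varphi(n,\M_i)$ iff $\exists j\,\M\models\psi(n,\M_i,\M_j)$, and $\M\models\lnot\varphi(n,\M_i)$ iff $\forall j\,\M\models\lnot\psi(n,\M_i,\M_j)$. Since $\lnot\psi$ is $\Sigma^1_1$, Lemma~\ref{Lem definability}, applied to the one-parameter rewriting at the index $f_\oplus(i,j)$, shows that $\{\langle n,i,j\rangle : \M\models\lnot\psi(n,\M_i,\M_j)\}$ is $\M$-computable uniformly in $i,j$; hence so is its complement $\{\langle n,i,j\rangle : \M\models\psi(n,\M_i,\M_j)\}$, and these two $\M$-computable sets genuinely capture $\Pi^1_1$-satisfaction and its negation, because the realizer $f_{\Sigma^1_1}$ makes $\M$ decide every $\Sigma^1_1$, equivalently every $\Pi^1_1$, sentence with parameters from $\M$. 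Prefixing $\exists j$ then shows that $\{\langle n,i\rangle : \M\models\varphi(n,\M_i)\}$ is $\Sigma^0_1$-definable from $\M$ and that its complement is $\Pi^0_1$-definable from $\M$.

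For the step from $m$ to $m+1$ (with $m+1\le k$) one writes $\varphi\in\Sigma^1_{m+2}$ as $\exists Z\,\psi(n,X,Z)$ with $\psi\in\Pi^1_{m+1}$, so that $\lnot\psi\in\Sigma^1_{m+1}$. By the induction hypothesis applied to $\lnot\psi$ (after folding $\M_i,\M_j$ into one parameter), $\{\langle n,i,j\rangle : \M\models\lnot\psi(n,\M_i,\M_j)\}$ is $\Sigma^0_m$-definable from $\M$, so $\{\langle n,i,j\rangle : \M\models\psi(n,\M_i,\M_j)\}$ is $\Pi^0_m$-definable from $\M$. Since $\M\models\varphi(n,\M_i)$ iff $\exists j\,\M\models\psi(n,\M_i,\M_j)$ while $\M\models\lnot\varphi(n,\M_i)$ iff $\forall j\,\M\models\lnot\psi(n,\M_i,\M_j)$, prefixing the number quantifier gives that $\{\langle n,i\rangle : \M\models\varphi(n,\M_i)\}$ is $\Sigma^0_{m+1}$-definable from $\M$ with $\Pi^0_{m+1}$-definable complement. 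Taking $m=k$ completes the argument.

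The proof is a bookkeeping induction rather than one with a single hard step; the points that need care are: (i) carrying the uniformity in the parameter index $i$ all the way up, which is why the inductive statement must display $i$; (ii) the repeated replacement of $\M\not\models\varphi$ by $\M\models\lnot\varphi$, which is legitimate because $\M$, as a realizable jump ideal equipped with $f_{\Sigma^1_1}$, carries a total truth valuation on $\Sigma^1_m\cup\Pi^1_m$ sentences with parameters from $\M$ for every $m$ — built by recursion on set quantifiers from the computable $\Pi^0_n$ truth valuations a realizable jump ideal possesses and from Lemma~\ref{Lem definability} — and this valuation is exactly what the induction bounds; and (iii) folding several $\M$-parameters into one via $f_\oplus$. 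I note in passing that the $\Delta^0_k\beta$ hypothesis on $\M$ is not used in this lemma: only that $\M$ is a realizable jump ideal carrying the realizer $f_{\Sigma^1_1}$, with all three realizers computable.
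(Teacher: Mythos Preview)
Your proof is correct and follows exactly the approach the paper has in mind: the paper's own proof is the single sentence ``It is an easy induction on $k$,'' and your argument is precisely that induction spelled out, with Lemma~\ref{Lem definability} supplying the base case and one set quantifier being peeled off at each step. Your observation that the $\Delta^0_k\beta$ hypothesis is unused here (only the computable realizers $f_{\TJ},f_{\oplus},f_{\Sigma^1_1}$ matter) is also correct; the $\Delta^0_k\beta$ property is only invoked later, in Lemma~\ref{Lem betamodel is a model of TI}.
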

\begin{proof}
  It is an easy induction on $k$.
\end{proof}

\begin{lemma}[$\ACAo$]\label{Lem betamodel is a model of TI}
  Let $k \in \omega$ such that $k > 0$.
  Let $\M$ be a $\Delta^0_k\beta$-model, $f_{\TJ},f_{\oplus}$ $f_{\Sigma^1_1}$ be computable realizers for $\M$.
  Then, $\M \models \Pi^1_{k+1}\hyp\TI$.
\end{lemma}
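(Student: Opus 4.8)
The goal is to show that a $\Delta^0_k\beta$-model $\M$ with computable realizers is a model of $\Pi^1_{k+1}\hyp\TI$, working in $\ACAo$. I would argue by contradiction, imitating the standard proof that $\beta$-models satisfy $\Sigma^1_1$ transfinite induction (cf.\ the treatment of $\Pi^1_1\hyp\TI$ in $\beta$-models in Simpson). Fix a $\Pi^1_{k+1}$ formula $\psi(n,\M_i)$ with a parameter $\M_i \in \M$ and a linear order $\prec \in \M$ that $\M$ believes is a well-order (more precisely, $\M$ satisfies the hypothesis of transfinite induction: $\forall n(\forall m \prec n\, \psi(m) \to \psi(n))$ and the progressiveness-type assumptions), and suppose toward a contradiction that $\M \models \exists n\, \lnot\psi(n,\M_i)$. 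The set $B = \{n : \M \models \lnot\psi(n,\M_i)\}$ is, by the preceding lemma, $\Sigma^0_k$-definable from $\M$, hence (in $\ACAo$, using the realizers) it exists as an actual set computable from a finite jump of $\M$.

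From here, the argument is the usual "find a $\prec$-descending sequence" move. Since $B$ is nonempty and $\M$ satisfies the transfinite-induction hypothesis for $\psi$, from each $n \in B$ one can find $m \prec n$ with $m \in B$: indeed $\M \models \lnot\psi(n) \to \exists m \prec n\, \lnot\psi(m)$. Using that $B$ is an honest set available to us in $\ACAo$ (this is exactly where the computable realizers and the previous lemma are needed — $B$ is not merely a $\Pi^1_{k+1}$-definable class but a genuine set), we can build by primitive recursion an infinite $\prec$-descending sequence through $B$. Since $\prec \in \M$ and this descending sequence is arithmetically definable from $\prec$, $B$, and $\M$, it exists in $\ACAo$; but then we must check it is visible \emph{inside} $\M$ as well, contradicting $\M \models \WO(\prec)$. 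This is the point where I would use the $\Delta^0_k\beta$-property: the statement "there is an infinite $\prec$-descending sequence" is a $\Sigma^1_1$ (hence $\Sigma^1_{k+1}$) statement with parameters from $\M$, and the witnessing sequence we constructed is $\leq_\T \TJ^{k-1}(\M) \oplus (\text{the realizers})$, which is $\leq_\T \TJ^{k-1}(\M)$ since the realizers are computable; so by the $\Delta^0_k\beta$-absorption property applied to the relevant $\Pi^0_2$-matrix formula, $\M$ itself contains a $\prec$-descending sequence, contradicting $\M \models \WO(\prec)$.

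**The main obstacle.**
The delicate step is the bookkeeping that turns the definable class $B = \{n : \M \models \lnot\psi(n,\M_i)\}$ into a set that is actually $\leq_\T \TJ^{k-1}(\M)$ (or some fixed finite jump of $\M$ that the $\Delta^0_k\beta$-property can absorb), \emph{and} simultaneously ensures the constructed descending sequence has a $\Pi^0_2(\M)$ matrix so that the $\Delta^0_k\beta$ clause literally applies. The previous lemma gives $\Sigma^0_k$-definability of $B$ from $\M$, and the realizers being computable is what keeps everything within $\TJ^{k-1}(\M)$; one has to be careful that "$\Sigma^0_k$ from $\M$" lines up with "$\leq_\T \TJ^{k-1}(\M)$" (it does: $\Sigma^0_k(\M)$ sets are computable from $\TJ^{k}(\M)$, but a single existential search for a descending sequence only needs $\TJ^{k-1}(\M)$ because $B$ enters positively, matching the $\Pi^0_2$-over-$\TJ^{k-1}$ shape in the definition of $\Delta^0_k\beta$-model). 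Once that alignment is in place, the contradiction is immediate, so I expect the rest to be routine and would not spell it out in detail.
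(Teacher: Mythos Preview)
Your proposal is correct and follows essentially the same line as the paper's proof: assume $\M$ satisfies $\WO(W)$ and the progressiveness hypothesis for a $\Pi^1_{k+1}$ formula $\varphi$, form the set $B=\{i:\M\models\lnot\varphi(i)\}$, observe via the preceding lemma that $B$ is $\Sigma^0_k(\M)$, extract a $W$-descending sequence $\leq_\T \TJ^{k-1}(\M)$, and invoke the $\Delta^0_k\beta$-property to reflect $\lnot\WO(W)$ into $\M$ for a contradiction. Your ``main obstacle'' paragraph in fact spells out the complexity bookkeeping (why the descending sequence lands in $\Delta^0_k(\M)$ despite $B$ being only $\Sigma^0_k(\M)$) that the paper's proof leaves implicit.
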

\begin{proof}
  Let $W \in \M$ be such that $\M \models \WO(W)$.
  Let $\varphi(i)$ be a $\Pi^1_{n+1}$ formula such that $\M$ satisfies the antecedent
of transfinite induction along $W$. That is,
  \begin{align*}
    \M \models \forall i \in |W| ( \forall j <_W i \varphi(j) \to \varphi(i))
  \end{align*}
  where $|W|$ denotes the field of $W$.
  
  Assume that $\M \not \models \forall i \in |W| \varphi(i)$.
  Then, $\{i \in |W| : \lnot \varphi(i)\}$ is nonempty and has no $W$-minimal element. 
  In addition, $\{i \in |W| : \lnot \varphi(i)\}$ is $\Sigma^0_k$-definable from $\M$. 
  Thus, there exists a $W$-descending sequence from 
  $\{i \in |W| : \lnot \varphi(i)\}$ which is $\Delta^0_k$ definable from $\M$. That is, 
  we have 
  \begin{align*}
  \exists A \leq_{\T} \TJ^{k-1}(\M) (A \text{ is a descending sequence of }  W).
  \end{align*}
  Since $\M$ is a $\Delta^0_k\beta$-model, $\M \models \lnot \WO(W)$. However, this is a contradiction.
\end{proof}
Now, we have  the implication  $\Sigma^0_{k+1}\LPP \to \RFN(\Pi^1_{k+2}\hyp\TI)$.
For the converse, we consider a $\Delta^0_k$ analogue of the implication $\RFN(\mathrm{A}\Pi^1_1\hyp\TI) \to \ALPP$.
The point of this implication is that for a given coded $\omega$-model $\M$ of $\mathrm{A}\Pi^1_1\hyp\TI$, the set of 
$\mathrm{A}\Pi^1_1\hyp\TI$-definable subclasses $\{ \{n  : \M \models \theta(n)\} : \theta \in \mathrm{A}\Pi^1_1\hyp\TI \}$ of $\M$ forms a $\beta$-extension of $\M$.
For considering a $\Delta^0_k$ analogue, we begin with introducing the classes $\Sigma^0_k(\Pi^1_1),\Pi^0_k(\Pi^1_1)$ and $\Delta^0_k(\Pi^1_1)$.
Intuitively, $\Sigma^0_k(\Pi^1_1)$ is the class of formulas of the form $\varphi[t \in X/\theta(t)]$ for some $\Sigma^0_k$ formula $\varphi$ and $\Pi^1_1$ formula $\theta(x)$, $\Pi^0_k(\Pi^1_1)$ is its dual, and $\Delta^0_k(\Pi^1_1)$ is the intersection of $\Sigma^0_k(\Pi^1_1)$ and $\Pi^0_k(\Pi^1_1)$. Here, $\varphi[t \in X/\theta(t)]$ is the formula obtained from $\varphi$ by replacing each occurrence of $t \in X$ with $\theta(t)$. Formally, we define these classes as follows.
\begin{definition}
  We  say a set $\varphi$ is $\Pi^0_1(\Pi^1_1)$ if it is of the form 
  $\forall x \theta(x)$ for some formula $\theta$ which is built from $\Pi^1_1$ using logical connectives $\lor,\land,\lnot$ and bounded quantifiers $\forall x < y,\exists x < y$.
  Similarly, a formula is $\Sigma^0_1(\Pi^1_1)$  if it is of the form 
  $\exists x \theta(x)$ for some formula $\theta$ which is built from $\Pi^1_1$ using logical connectives $\lor,\land,\lnot$ and bounded quantifiers $\forall x < y,\exists x < y$.
  For the general case, we say a formula is $\Pi^0_{k+1}(\Pi^1_1)$ if it is of the form $\forall x \theta(x)$ for some formula $\theta$ which is built from $\Pi^0_k(\Pi^1_1)$ using logical connectives $\lor,\land,\lnot$ and bounded quantifiers $\forall x < y,\exists x < y$.
  We also say a formula is $\Sigma^0_{k+1}(\Pi^1_1)$  if it is of the form 
  $\exists x \theta(x)$ for some formula $\theta$ which is built from $\Pi^0_k(\Pi^1_1)$ using logical connectives $\lor,\land,\lnot$ and bounded quantifiers $\forall x < y,\exists x < y$.
 
  We say a formula is $\Delta^0_n(\Pi^1_1)$ (in a fixed base theory $T$) if it belongs to both $\Pi^0_n(\Pi^1_1)$ and $\Sigma^0_n(\Pi^1_1)$.
\end{definition}

In what follows, we show that the class of $\Delta^0_n(\Pi^1_1)$-definable sets forms a model of $\RCAo$ in a sense.
\begin{lemma}[$\RCAo + \Sigma^1_{\infty}\hyp\IND$]
  The class $\Sigma^0_n(\Pi^1_1)$ is closed under $\lor,\land$ and bounded quantifications and the existential number quantification. Similarly, $\Pi^0_n(\Pi^1_1)$ is closed under $\lor,\land$ and bounded quantifications and the universal number quantification.
\end{lemma}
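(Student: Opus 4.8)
The plan is to reduce all but one of the asserted closure properties to routine syntactic manipulations already valid in $\RCAo$ --- combining like quantifiers via a pairing function, substituting terms for free variables, and pushing negations inward --- and to obtain the remaining case, closure of $\Sigma^0_n(\Pi^1_1)$ under bounded universal quantification, from the strong form of bounded collection that follows from $\Sigma^1_{\infty}\hyp\IND$. Here, saying that a syntactic class $\Gamma$ is closed under an operation means that applying the operation to a formula in $\Gamma$ yields a formula provably equivalent over the base theory to one in $\Gamma$. It is convenient to write $E_n$ for the matrix class consisting of the formulas built from $\Pi^0_{n-1}(\Pi^1_1)$ (with the convention $\Pi^0_0(\Pi^1_1) := \Pi^1_1$) using $\lor,\land,\lnot$ and bounded quantifiers, so that $\Sigma^0_n(\Pi^1_1) = \{\exists x\,\theta : \theta \in E_n\}$ and $\Pi^0_n(\Pi^1_1) = \{\forall x\,\theta : \theta \in E_n\}$; by construction $E_n$ is closed under $\lor,\land,\lnot$, bounded quantifiers, and substitution of terms for free variables, and contains every $\Delta^0_0$ formula up to propositional equivalence.

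First I would handle the easy closures, none of which uses induction. For $\land$ and $\lor$, the formulas $\exists x\,\theta_1(x)\land\exists y\,\theta_2(y)$ and $\exists x\,\theta_1(x)\lor\exists y\,\theta_2(y)$ are equivalent to $\exists w\,(\theta_1((w)_0)\land\theta_2((w)_1))$ and $\exists w\,(\theta_1((w)_0)\lor\theta_2((w)_1))$, whose matrices lie in $E_n$. For the unbounded existential number quantifier, $\exists z\,\exists x\,\theta(z,x)$ is equivalent to $\exists w\,\theta((w)_0,(w)_1)$. For the bounded existential quantifier, $\exists x<y\,\exists z\,\theta(x,z)$ is equivalent to $\exists w\,((w)_0<y\land\theta((w)_0,(w)_1))$, with matrix in $E_n$ because $(w)_0<y$ is $\Delta^0_0$. (When the language of second-order arithmetic has no symbol for the projections $(\cdot)_i$, one replaces each such formula by the evident bounded-quantifier paraphrase, e.g.\ $\exists u,v<w\,(\langle u,v\rangle=w\land u<y\land\theta(u,v))$, which still lies in $E_n$.) The statements for $\Pi^0_n(\Pi^1_1)$ then follow by negating: $\Pi^0_n(\Pi^1_1)$ is the class of negations of $\Sigma^0_n(\Pi^1_1)$ formulas; closure of $\Sigma^0_n(\Pi^1_1)$ under $\land$ and $\lor$ gives closure of $\Pi^0_n(\Pi^1_1)$ under $\lor$ and $\land$; closure under the unbounded universal number quantifier is proved exactly as in the existential case; closure of $\Sigma^0_n(\Pi^1_1)$ under $\exists x<y$ gives closure of $\Pi^0_n(\Pi^1_1)$ under $\forall x<y$; and, using the next paragraph, closure of $\Sigma^0_n(\Pi^1_1)$ under $\forall x<y$ gives closure of $\Pi^0_n(\Pi^1_1)$ under $\exists x<y$.

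The one closure that genuinely requires $\Sigma^1_{\infty}\hyp\IND$ is that $\Sigma^0_n(\Pi^1_1)$ is closed under bounded universal quantification. For $\theta\in E_n$ I claim
\[
\forall x<y\,\exists z\,\theta(x,z)\;\leftrightarrow\;\exists w\,\forall x<y\,\exists z<w\,\big((w)_x=z\land\theta(x,z)\big),
\]
whose right-hand side lies in $\Sigma^0_n(\Pi^1_1)$, since $\exists z<w$ and $\forall x<y$ are bounded, $E_n$ is closed under bounded quantifiers and term substitution, and $(w)_x=z$ is $\Delta^0_0$. The direction $(\leftarrow)$ is immediate. For $(\rightarrow)$ I would prove, by induction on $y$, the formula
\[
\forall x<y\,\exists z\,\theta(x,z)\to\exists w\,\big(w\text{ codes a length-}y\text{ sequence}\land\forall x<y\,\theta(x,(w)_x)\big),
\]
whose base case $y=0$ is witnessed by the empty sequence and whose inductive step appends a witness of $\exists z\,\theta(y,z)$ to a witness for the instance at $y$. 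Since $\theta$ carries set quantifiers inside its $\Pi^1_1$ subformulas, this is a bona fide second-order induction, hence an instance of $\Sigma^1_{\infty}\hyp\IND$; note that the step exhibits the witnessing $w$ explicitly, so no a priori bounding of the witnesses is needed. This final step is the main obstacle, but it amounts to nothing more than the familiar fact that full second-order induction proves full strong bounded collection.
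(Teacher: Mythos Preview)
Your proof is correct and follows the same approach the paper intends: the paper's own argument is the single sentence ``the usual proof for the closure properties of $\Sigma^0_n$ and $\Pi^0_n$ works,'' and you have simply written out what that usual proof is, including the one nontrivial step (bounded universal quantification over a $\Sigma^0_n(\Pi^1_1)$ formula) where $\Sigma^1_\infty\hyp\IND$ is invoked to obtain the needed instance of strong collection.
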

\begin{proof}
Working in $\RCAo + \Sigma^1_{\infty}\hyp\IND$, 
the usual proof for the closure properties of $\Sigma^0_n$ and $\Pi^0_n$ works.
\end{proof}

\begin{lemma}[$\RCAo + \Sigma^1_{\infty}\hyp\IND$] \label{Lem Pi0n(Pi11) and Pi0n+1}
  The class $\Pi^0_n(\Pi^1_1)$ is a subclass of $\Pi^1_{n+1}$.
\end{lemma}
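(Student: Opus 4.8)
The plan is to prove, by induction on the standard integer $n\ge 1$, the slightly stronger statement that over $\RCAo+\Sigma^1_{\infty}\hyp\IND$ every $\Pi^0_n(\Pi^1_1)$ formula is provably equivalent to a $\Pi^1_{n+1}$ formula and, simultaneously, every $\Sigma^0_n(\Pi^1_1)$ formula is provably equivalent to a $\Sigma^1_{n+1}$ formula. Both halves have to travel together: since $\Sigma^0_{n}(\Pi^1_1)$ is built from $\Pi^0_{n-1}(\Pi^1_1)$ using $\lnot$ among the connectives, a $\Pi^0_n(\Pi^1_1)$ formula $\forall x\,\theta(x)$ contains, besides $\Pi^0_{n-1}(\Pi^1_1)$-subformulas, also their negations, i.e.\ $\Sigma^0_{n-1}(\Pi^1_1)$-subformulas.

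Before running the induction I would collect some bookkeeping facts, all provable already in $\RCAo$: (a) substituting $\Delta^0_1$-definable number terms and set terms into a $\Pi^1_m$ (resp.\ $\Sigma^1_m$) formula keeps it $\Pi^1_m$ (resp.\ $\Sigma^1_m$); (b) a leading number quantifier of the same kind as the adjacent leading set quantifier is absorbed by it, e.g.\ $\forall x\,\forall X\,\varphi(x,X)\leftrightarrow\forall Z\,\varphi(x(Z),X(Z))$ where $x(Z)$ and $X(Z)$ are the obvious $\Delta^0_1$-in-$Z$ projections, and dually for $\exists$; (c) for $m\ge 1$ the pair $(\Pi^1_m,\Sigma^1_m)$ is closed under $\land$ and $\lor$ and is interchanged by $\lnot$; and (d) a bounded number quantifier is absorbed into a $\Pi^1_m$ or $\Sigma^1_m$ formula in all configurations except $\exists x<y$ in front of a $\Pi^1_m$ formula and $\forall x<y$ in front of a $\Sigma^1_m$ formula. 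For those two cases I would use bounded set choice, $\forall x<y\,\exists Z\,\psi(x,Z)\to\exists W\,\forall x<y\,\psi(x,(W)_x)$: for $\psi\in\Pi^1_m$ it is a $\Sigma^1_{m+1}$ statement, provable by (bounded) induction on $z\le y$ applied to the formula $\exists W\,\forall x<z\,\psi(x,(W)_x)$, hence available from $\Sigma^1_{\infty}\hyp\IND$; it yields, for $\sigma\in\Sigma^1_m$, the equivalence $\exists x<y\,\forall X\,\sigma(x,X)\leftrightarrow\forall W\,\exists x<y\,\sigma(x,(W)_x)$ together with its dual.

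The induction itself is then uniform. For $n=1$ the atoms occurring in $\theta$ are $\Pi^1_1$ formulas and, through $\lnot$, $\Sigma^1_1$ formulas; for the step from $n$ to $n+1$ the induction hypothesis turns the $\Pi^0_n(\Pi^1_1)$- and $\Sigma^0_n(\Pi^1_1)$-atoms of $\theta$ into $\Pi^1_{n+1}$ and $\Sigma^1_{n+1}$ formulas. In both cases, writing $\ell=n+1$ for the target level, attaching a dummy set quantifier makes every atom simultaneously $\Pi^1_{\ell}$ and $\Sigma^1_{\ell}$. An inner induction on how $\theta$ is built from these atoms using $\land,\lor,\lnot$ and bounded number quantifiers — using (c), (d), and the closure of $\Sigma^1_{\ell}$ (resp.\ $\Pi^1_{\ell}$) under $\land,\lor$ and bounded existential (resp.\ universal) number quantification — shows that $\theta$ is simultaneously $\Pi^1_{\ell}$ and $\Sigma^1_{\ell}$. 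Finally, writing $\theta$ in its $\Pi^1_{\ell}$-form $\forall X\,\sigma(x,X)$ with $\sigma\in\Sigma^1_{\ell-1}$, fact (b) rewrites $\forall x\,\theta(x)$ as $\forall Z\,\sigma(x(Z),X(Z))$, which is $\Pi^1_{\ell}$; symmetrically $\exists x\,\theta(x)$ is $\Sigma^1_{\ell}$ from the $\Sigma^1_{\ell}$-form of $\theta$. This is precisely the assertion for $n$, so the induction closes and in particular $\Pi^0_n(\Pi^1_1)\subseteq\Pi^1_{n+1}$.

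The two points that I expect to need genuine care are both about bounded number quantifiers. First, one must identify exactly which juxtapositions of a bounded number quantifier with a set quantifier are harmless and which force the use of bounded set choice, and then confirm that this choice principle really does follow from $\Sigma^1_{\infty}\hyp\IND$ over $\RCAo$. Second — and this is the reason the statement lands in $\Pi^1_{n+1}$ rather than one level higher — one must be sure that the leading number quantifier $\forall x$ of a $\Pi^0_n(\Pi^1_1)$ formula is absorbed \emph{into} the first universal set quantifier of the $\Pi^1_{n+1}$ form of $\theta$, rather than merely left standing in front of it.
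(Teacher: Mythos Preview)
Your argument is correct and follows the same route as the paper's: induct on $n$, observing that the matrix $\theta$ of a $\Pi^0_n(\Pi^1_1)$ formula is $\Delta^1_{n+1}$ (since the atoms and their negations sit one level below and bounded quantifiers are absorbed via $\Sigma^1_\infty\hyp\IND$), whence the leading $\forall x$ yields a $\Pi^1_{n+1}$ formula. You are more explicit than the paper about exactly where bounded collection---hence $\Sigma^1_\infty\hyp\IND$---is invoked, but the skeleton is identical; one small indexing slip: in the step from $n$ to $n+1$ your target level $\ell$ should be $n+2$, not $n+1$.
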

\begin{proof}
  We prove by induction on $n$.
  We note that if a formula $\varphi$ is built from $\Pi^1_1$ with $\land,\lor,\lnot,\forall x < y,\exists x < y$, then $\varphi$ is $\Delta^1_2$ in $\RCAo + \Sigma^1_{\infty}\hyp\IND$. 
  Thus, a $\Pi^0_1(\Pi^1_1)$ formula is of the form $\forall x \Delta^1_2$, and hence $\Pi^1_2$.

  For the induction step, we can show that a formula built from $\Pi^0_n(\Pi^1_1)$ with $\land, \lor, \lnot,\forall x < y,\exists x < y$ is $\Delta^1_{n+1}$. Thus, we have that a $\Pi^0_{n+1}(\Pi^1_1)$ formula is $\Pi^1_{n+1}$ formula.
\end{proof}

\begin{lemma}[$\RCAo + \Sigma^1_{\infty}\hyp\IND$]
  Let $\varphi(n)$ be a $\Sigma^0_k(\Pi^1_1)$ formula and $\psi(n)$ be a $\Pi^0_k(\Pi^1_1)$ formula. If $X = \{n : \varphi(n)\} = \{n : \psi(n)\}$ and $Y \leq_{\T} X$, then $Y$ is $\Delta^0_n(\Pi^1_1)$-definable.
\end{lemma}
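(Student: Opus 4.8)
The statement is the $\Delta^0_k(\Pi^1_1)$ analogue of the classical fact that the $\Delta^0_k$ sets are closed downward under Turing reducibility, and the plan is to carry out the classical proof while checking that the needed hierarchy manipulations go through in $\RCAo + \Sigma^1_{\infty}\hyp\IND$. Since $X$ and $Y$ are sets with $Y \leq_{\T} X$, in $\RCAo$ we may fix a total Turing functional $\Phi$ such that the characteristic function $\chi_Y$ equals $\Phi^{\chi_X}$. By the use principle, for every $n$,
\[
  n \in Y \longleftrightarrow \exists \sigma\,(\sigma \prec \chi_X \land \Phi^{\sigma}(n)\downarrow\,= 1), \qquad n \notin Y \longleftrightarrow \exists \sigma\,(\sigma \prec \chi_X \land \Phi^{\sigma}(n)\downarrow\,= 0),
\]
where $\sigma$ ranges over finite binary strings. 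The predicate ``$\Phi^{\sigma}(n)\downarrow\,= i$'' is $\Delta^0_0$ in the parameters $\sigma, n, i$, so everything reduces to showing that the predicate ``$\sigma \prec \chi_X$'' (as a predicate of $\sigma$) is simultaneously expressible as a $\Sigma^0_k(\Pi^1_1)$ and as a $\Pi^0_k(\Pi^1_1)$ formula.

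For this I would use the two given definitions of $X$. Rendering positive membership in $X$ by $\varphi$ and negative membership by $\lnot\psi$, we have
\[
  \sigma \prec \chi_X \longleftrightarrow (\forall m < |\sigma|)\bigl[(\sigma(m) = 1 \land \varphi(m)) \lor (\sigma(m) = 0 \land \lnot\psi(m))\bigr].
\]
On the right, $\varphi(m)$ is $\Sigma^0_k(\Pi^1_1)$, and $\lnot\psi(m)$ is the negation of a $\Pi^0_k(\Pi^1_1)$ formula, hence equivalent to a $\Sigma^0_k(\Pi^1_1)$ formula by the De Morgan manipulations behind the closure properties proved above; the conditions $\sigma(m) = i$ are $\Delta^0_0$; and $\Sigma^0_k(\Pi^1_1)$ absorbs finite conjunctions and disjunctions with such formulas as well as the leading bounded universal quantifier, again by the closure lemma above (this is where $\Sigma^1_{\infty}\hyp\IND$ enters). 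Hence ``$\sigma \prec \chi_X$'' is $\Sigma^0_k(\Pi^1_1)$. Rendering positive membership by $\psi$ and negative membership by $\lnot\varphi$ instead, the dual argument shows that ``$\sigma \prec \chi_X$'' is also $\Pi^0_k(\Pi^1_1)$, so it is $\Delta^0_k(\Pi^1_1)$.

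Finally, substituting the $\Sigma^0_k(\Pi^1_1)$ form of ``$\sigma \prec \chi_X$'' into the two equivalences of the first paragraph, ``$n \in Y$'' becomes an existential number quantifier over a conjunction of a $\Sigma^0_k(\Pi^1_1)$ formula and a $\Delta^0_0$ formula, hence $\Sigma^0_k(\Pi^1_1)$; likewise ``$n \notin Y$'' is $\Sigma^0_k(\Pi^1_1)$, so ``$n \in Y$'' is $\Pi^0_k(\Pi^1_1)$. Therefore $Y$ is $\Delta^0_k(\Pi^1_1)$-definable. The main obstacle is the bookkeeping in the middle step: one must confirm, provably in $\RCAo + \Sigma^1_{\infty}\hyp\IND$, that the negation of a $\Pi^0_k(\Pi^1_1)$ formula is $\Sigma^0_k(\Pi^1_1)$ and dually, and that these classes are closed under finite Boolean combinations with $\Delta^0_0$ formulas and under bounded quantification. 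These are the familiar normalizations of the arithmetical hierarchy, redone for the $\Pi^1_1$-relativized classes; it is precisely the induction used to normalize the bounded quantifier over $\sigma$ (and to contract the quantifiers produced when pushing negations inward) that forces the $\Sigma^1_{\infty}\hyp\IND$ hypothesis.
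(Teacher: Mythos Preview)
Your argument is correct and is essentially the same as the paper's, just made fully explicit: the paper simply observes that it suffices to show a $\Sigma^0_1(X)$ set is $\Sigma^0_k(\Pi^1_1)$-definable and a $\Pi^0_1(X)$ set is $\Pi^0_k(\Pi^1_1)$-definable, calling this ``immediate from induction on the construction of $\Sigma^0_1/\Pi^0_1$ formulas,'' whereas you carry out that substitution concretely via the use principle for a Turing functional. The closure properties you invoke (including that the negation of a $\Pi^0_k(\Pi^1_1)$ formula is $\Sigma^0_k(\Pi^1_1)$, which is immediate from the definition since $\lnot$ is among the allowed connectives at the base level) are exactly the ones the paper establishes in the preceding lemma.
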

\begin{proof}
  It is enough to show that if $Y$ is $\Sigma^0_1$-definable from $X$ then $Y$ is $\Sigma^0_k(\Pi^1_1)$-definable, and if $Y$ is $\Pi^0_1$-deinable from $X$ then $Y$ is $\Pi^0_k(\Pi^1_1)$-definable. This is immediate from induction on the construction of $\Sigma^0_1/\Pi^0_1$ formulas.
\end{proof}

\begin{lemma}[$\RCAo+ \Sigma^1_{\infty}\hyp\IND$]
  Let $\varphi_X(n),\varphi_Y(n)$ be $\Sigma^0_k(\Pi^1_1)$ formulas and $\psi_X(n)$, $\psi_Y(n)$ be $\Pi^0_k(\Pi^1_1)$ formulas.
  If $X = \{n: \varphi_X(n)\} = \{n: \varphi_Y(n)\}$ and $Y = \{n : \varphi_Y(n)\} = \{n : \varphi_Y(n)\}$,
  then $X \oplus Y$ is $\Delta^0_k(\Pi^1_1)$-definable.
\end{lemma}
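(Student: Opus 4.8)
The plan is to express membership in the standard join $X \oplus Y = \{2n : n \in X\} \cup \{2n+1 : n \in Y\}$ in two complementary ways: once as a $\Sigma^0_k(\Pi^1_1)$ formula built out of $\varphi_X$ and $\varphi_Y$, and once as a $\Pi^0_k(\Pi^1_1)$ formula built out of $\psi_X$ and $\psi_Y$; then $X \oplus Y$ is $\Delta^0_k(\Pi^1_1)$-definable by definition. Throughout I work in $\RCAo + \Sigma^1_{\infty}\hyp\IND$, so that the closure properties of $\Sigma^0_k(\Pi^1_1)$ and $\Pi^0_k(\Pi^1_1)$ proved above (closure under $\land$, $\lor$, bounded quantifiers, and the appropriate unbounded number quantifier) are available. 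The only step requiring any care is the bookkeeping about the base case: one has to treat purely arithmetical side conditions such as ``$m = 2p$'' as legitimate ingredients of formulas in the $\Sigma^0_k(\Pi^1_1)$- and $\Pi^0_k(\Pi^1_1)$-hierarchies. This is harmless because a $\Delta^0_0$ formula is $\Pi^1_1$ (prefix a vacuous set quantifier), hence lies in the base class of both hierarchies, so that the cited closure lemmas apply directly.

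For the $\Sigma^0_k(\Pi^1_1)$ description, note that
\begin{align*}
  m \in X \oplus Y \leftrightarrow \exists p\,(m = 2p \land \varphi_X(p)) \lor \exists p\,(m = 2p+1 \land \varphi_Y(p)).
\end{align*}
Since $m = 2p$ and $m = 2p+1$ lie in the base class, each conjunction is $\Sigma^0_k(\Pi^1_1)$ by closure under $\land$, each existential number quantification preserves $\Sigma^0_k(\Pi^1_1)$, and the disjunction of two $\Sigma^0_k(\Pi^1_1)$ formulas is again $\Sigma^0_k(\Pi^1_1)$. Hence the right-hand side is $\Sigma^0_k(\Pi^1_1)$.

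Dually, for the $\Pi^0_k(\Pi^1_1)$ description, note that
\begin{align*}
  m \in X \oplus Y \leftrightarrow \forall p\,(m \neq 2p \lor \psi_X(p)) \land \forall p\,(m \neq 2p+1 \lor \psi_Y(p)).
\end{align*}
Each disjunction here is $\Pi^0_k(\Pi^1_1)$ by closure under $\lor$, each universal number quantification preserves $\Pi^0_k(\Pi^1_1)$, and the conjunction of two $\Pi^0_k(\Pi^1_1)$ formulas is again $\Pi^0_k(\Pi^1_1)$; so the right-hand side is $\Pi^0_k(\Pi^1_1)$. As $X \oplus Y$ is thus defined both by a $\Sigma^0_k(\Pi^1_1)$ and by a $\Pi^0_k(\Pi^1_1)$ formula, it is $\Delta^0_k(\Pi^1_1)$-definable. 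Together with the preceding lemma (closure of the $\Delta^0_k(\Pi^1_1)$-definable sets under Turing reducibility), this yields that the collection of $\Delta^0_k(\Pi^1_1)$-definable subsets of $\N$ is closed under $\oplus$ and $\leq_{\T}$, which is what is needed downstream in building the $\beta$-extension of the given coded $\omega$-model.
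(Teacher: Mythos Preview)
Your proof is correct and follows essentially the same approach as the paper: build an explicit $\Sigma^0_k(\Pi^1_1)$ definition of $X\oplus Y$ from $\varphi_X,\varphi_Y$ and an explicit $\Pi^0_k(\Pi^1_1)$ definition from $\psi_X,\psi_Y$, invoking the closure lemma for each step. Your write-up is in fact a bit cleaner than the paper's (you keep the roles of $m$ and $p$ straight and use $\psi_X,\psi_Y$ in the $\Pi$ half, whereas the paper has evident typos there), and your remark that $\Delta^0_0$ side conditions sit in the base class via a vacuous set quantifier is a helpful clarification the paper leaves implicit.
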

\begin{proof}
  Recall that $X \oplus Y = \{2n : n \in X\} \cup \{2n+1 : n \in Y\} = \{2n : \varphi_X(n)\} \cup \{2n+1 : \varphi_Y(n)\}$.
  Define $\varphi(n) \equiv \exists m ((m = 2n \land \varphi_X(m)) \lor (m = 2n+1 \land \varphi_Y(m))$. Since $\Sigma^0_k(\Pi^1_1)$ is closed under logical connectives and existential number quantification, $\varphi$ is also $\Sigma^0_k(\Pi^1_1)$.
  Similarly, $\psi(n) \equiv \forall m ((m = 2n \to \varphi_X(m)) \lor (m = 2n+1 \to \varphi_Y(m))$ is $\Pi^0_k(\Pi^1_1)$.
Then, $X \oplus Y$ is defined by both $\varphi$ and $\psi$. This completes the proof.
\end{proof}

\begin{lemma}[$\ACA_0$]\label{Lem conservertivity}
  Let $\M$ and $\NN$ be coded $\omega$-models such that 
  $\M \models \ACAo,\NN \models \RCAo$ and 
  $\forall X(X \in \M \to X \in \NN)$.
  Assume that for any linear order $L \in \M$, $\M \models \WO(L)$ if and only if $\NN \models \WO(L)$.
  Then for any $\Sigma^1_1(\Pi^0_2)$ sentence $\sigma$ with parameters from $\M$, 
  $\M \models \sigma $ if and only if $\NN \models \sigma$.
\end{lemma}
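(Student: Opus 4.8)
The plan is to reduce $\sigma$ to the ill-foundedness of a tree $T$ recursive in its parameters, and then to transfer the statement ``$[T]\neq\emptyset$'' back and forth between $\M$ and $\NN$ by passing to the Kleene--Brouwer ordering of $T$ and invoking the hypothesis on well-orders. Working in $\ACAo$, let $Z\in\M$ be the join of the set parameters occurring in $\sigma$. By the primitive recursive, $\RCAo$-provable translation between $\exists X\,\Pi^0_2$ and $\exists f\,\Pi^0_1$, there is $e\in\N$ with $\RCAo\vdash\sigma\leftrightarrow\exists f\,\pi^0_1(e,f,Z)$; let $T=T_{e,Z}\subseteq\N^{<\N}$ be the associated tree. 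Then $T\leq_{\T}Z$, so $T\in\M\subseteq\NN$; the Kleene--Brouwer ordering $\KB(T)$ is likewise recursive in $Z$, so $\KB(T)\in\M\subseteq\NN$; and $\RCAo$ proves both that $\KB(T)$ is a linear order and that $\exists f\,\pi^0_1(e,f,Z)\leftrightarrow[T]\neq\emptyset$. Since $\M,\NN\models\RCAo$, it suffices to prove that $\M\models[T]\neq\emptyset$ if and only if $\NN\models[T]\neq\emptyset$.

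The forward direction is soft and uses neither $\M\models\ACAo$ nor the hypothesis on well-orders: a path $f\in\M$ of $T$ also lies in $\NN$, and ``$f$ is a path of $T$'' is arithmetical, hence absolute between the coded $\omega$-models $\M$ and $\NN$, so $\NN\models[T]\neq\emptyset$. For the converse, suppose $\NN\models[T]\neq\emptyset$, witnessed by a path $f\in\NN$, but $\M\not\models[T]\neq\emptyset$, i.e.\ $\M$ believes that $T$ is well-founded. Since $\M\models\ACAo$ and $\ACAo$ proves that the Kleene--Brouwer ordering of a well-founded tree is a well-order (see \cite{Simpson}), we obtain $\M\models\WO(\KB(T))$; applying the hypothesis with $L:=\KB(T)\in\M$ gives $\NN\models\WO(\KB(T))$. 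But $n\mapsto f\restriction n$ is an infinite descending sequence in $\KB(T)$ (as $f\restriction(n+1)$ properly extends $f\restriction n$), and it is recursive in $f\oplus T$, hence lies in $\NN$, so $\NN\models\lnot\WO(\KB(T))$ --- a contradiction. Therefore $\M\models[T]\neq\emptyset$.

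The \emph{only} step with real content is the implication used in the converse direction, that $\ACAo$ proves ``$T$ well-founded $\Rightarrow$ $\KB(T)$ is a well-order''. Its proof extracts a path of $T$ from a $\KB(T)$-descending sequence $(\sigma_i)_i$ by an arithmetical recursion on $n$ producing a node $\rho_n\in T$ of length $n$ and a threshold $k_n$ with $\sigma_i\restriction n=\rho_n$ for all $i\geq k_n$ --- using that along a $\KB$-descending sequence the value of the $n$-th coordinate is eventually non-increasing --- and then taking the path $\bigcup_n\rho_n$; this is exactly where arithmetical comprehension is needed. Everything else is routine bookkeeping with absoluteness of arithmetical formulas between coded $\omega$-models and with the $\RCAo$-provable normal form; the one point demanding care is to keep the relativized truth $\sigma^{\M}$ and the Tarskian $\M\models\sigma$ apart, which over $\ACAo$ presents no difficulty.
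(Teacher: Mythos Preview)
Your proof is correct and follows essentially the same route as the paper: reduce $\sigma$ to the statement $[T]\neq\varnothing$ for a tree $T$ recursive in the parameters, pass to the Kleene--Brouwer ordering $\KB(T)$, invoke the well-order hypothesis, and use $\M\models\ACAo$ to convert $\lnot\WO(\KB(T))$ back into $[T]\neq\varnothing$ inside $\M$. The only cosmetic difference is that the paper argues the nontrivial direction directly (from $\NN\models\lnot\WO(\KB(T))$ to $\M\models\lnot\WO(\KB(T))$ to $\M\models[T]\neq\varnothing$) whereas you phrase it contrapositively, and you add an explanatory paragraph on why $\ACAo$ is needed for the $\KB$ step; the mathematical content is the same.
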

\begin{proof}
  Write $\sigma \equiv \exists X \forall x \exists y \theta$ by a $\Sigma^0_0$ formula $\theta$. It is enough to show that if $\NN \models \sigma$ then $\M \models \sigma$.

  By Skolemization and Kleene's normal form, there is a $\Sigma^0_0$ formula $\eta$ such that $\sigma \leftrightarrow \exists X \exists f \in \N^{\N} \forall x \eta(X[x],x,f[x])$ is provable in $\RCAo$.
  Assume $\NN \models \exists X \forall x \exists y \theta(X,x,y)$. Pick an $X_0$ and $f_0$ such that $\NN \models \forall x  \eta(X_0[x],x,f_0[x])$.
  We define a tree $T \subseteq (2 \times \N)^{<\N}$ by 
  $T = \{(\sigma, \tau) : \eta(\sigma,|\sigma|,\tau) \}$.
  Then, $T$ is computable from parameters in $\sigma$. Thus, $T$ is in $\M$ and $\N$.
  
  We have $\NN \models (X_0,f_0) \in [T]$. Thus, $\NN \models \lnot \WO(\KB(T))$ where $\KB(T)$ denotes the Kleene-Brouwer ordering of $T$.
By assumption, $\M \models \lnot \WO(\KB(T))$. Since $\M$ is a model of $\ACAo$, $\M \models [T] \neq \varnothing$. 
  Thus, $\M \models \sigma$.
\end{proof}

\begin{lemma}[$\RCAo$]
\label{Lem ref of TI implies LPP}
  $\RFN(\Delta^0_k(\Pi^1_1)\hyp\TI_0)$ implies $\Delta^0_k\LPP$.
\end{lemma}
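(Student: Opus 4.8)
The plan is to imitate the proof of $\RFN(\mathrm{A}\Pi^1_1\hyp\TI_0)\to\ALPP$: pass from a reflecting $\omega$-model to the coded $\omega$-model of its $\Delta^0_k(\Pi^1_1)$-definable sets, and then read a $\Delta^0_k$-leftmost path off the latter.

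First I would note that $\RFN(\Delta^0_k(\Pi^1_1)\hyp\TI_0)$ proves $\ACAo$ (given any $X$, a reflecting model supplies $\TJ(X)$), so I may argue in $\ACAo$. Let $T$ be ill-founded and fix $h_0\in[T]$. Applying the hypothesis to $T\oplus h_0$ yields a coded $\omega$-model $\M$ with $T\oplus h_0\in\M$ and $\M\models\ACAo+\Delta^0_k(\Pi^1_1)\hyp\TI_0$; in particular $\M\models[T]\neq\emptyset$. Let $\NN$ be the coded $\omega$-model whose sets are exactly those $\Delta^0_k(\Pi^1_1)$-definable over $\M$ with parameters from $\M$. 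By the closure lemmas above for $\Sigma^0_k(\Pi^1_1)$ and $\Pi^0_k(\Pi^1_1)$ (closure under connectives, the appropriate number quantifiers, Turing reducibility and $\oplus$), $\NN\models\RCAo$, and $\M\subseteq\NN$ since each $\M_i$ is $\Delta^0_1(\Pi^1_1)\subseteq\Delta^0_k(\Pi^1_1)$-definable over $\M$.

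Next I would show that $\M$ and $\NN$ decide $\WO$ alike on linear orders $L\in\M$. If $\NN\models\lnot\WO(L)$, pick a descending sequence $g\in\NN$; its graph is $\Delta^0_k(\Pi^1_1)$-definable over $\M$, so ``$x\notin\mathrm{ran}(g)$'' is a $\Pi^0_k(\Pi^1_1)$ formula with parameters from $\M$, the premise of transfinite induction along $L$ for it holds in $\M$ because $g$ is descending, and the transfinite induction available in $\M$ then yields $\M\models\forall x\,(x\notin\mathrm{ran}(g))$, contradicting $g(0)\in\mathrm{ran}(g)$; hence $\M\models\lnot\WO(L)$. The converse is immediate from $\M\subseteq\NN$. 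By Lemma~\ref{Lem conservertivity} it follows that $\M$ and $\NN$ agree on every $\Sigma^1_1(\Pi^0_2)$ sentence with parameters from $\M$; applied to ``$[T_\sigma]\neq\emptyset$'' (parameter $T\in\M$, number $\sigma$) this shows that $U:=\{\sigma:\M\models[T_\sigma]\neq\emptyset\}$ equals $\{\sigma:\NN\models[T_\sigma]\neq\emptyset\}$.

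For the extraction I would use that the leftmost path of a tree $S$ is defined by a fixed formula $\psi(n,m,S)$ which is $\Delta^0_1(\Pi^1_1)$: unwinding the recursion, ``$f^{\mathrm{leftmost}}(n)=m$'' is, after one unbounded number quantifier, a Boolean combination of instances of the $\Sigma^1_1$ statement ``$[S_\sigma]\neq\emptyset$'' and its negation, so it lies simultaneously in $\Sigma^0_1(\Pi^1_1)$ and $\Pi^0_1(\Pi^1_1)$. Put $f:=\{(n,m):\M\models\psi(n,m,T)\}$; then $f\in\NN$. Interpreting $\psi$ in $\M$ replaces ``$[T_\sigma]\neq\emptyset$'' by ``$\sigma\in U$'', so $f$ is the always-go-left path through $U$; since $\M\models[T]\neq\emptyset$, $U$ is a nonempty leafless subtree of $T$, whence $f\in[T]$. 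It remains to see that $f$ is $\leq_{\mathrm{lex}}$-least in $\{g\in[T]:g\leq_{\T}\TJ^{k-1}(T\oplus f)\}$. For the first point: if $g\in[T]\cap\NN$, then for each $n$ the tail $n'\mapsto g(n+n')$ lies in $\NN$ and witnesses $\NN\models[T_{g[n]}]\neq\emptyset$, so $g[n]\in U$ by the agreement above; thus $g$ threads $U$ and $f\leq_{\mathrm{lex}}g$. For the second point: if $g\in[T]$ and $g\leq_{\T}\TJ^{k-1}(T\oplus f)$, then since $T\oplus f$ is $\Delta^0_1(\Pi^1_1)$-definable over $\M$, a complexity count shows that any total function Turing-reducible to $\TJ^{k-1}(T\oplus f)$ is $\Delta^0_k(\Pi^1_1)$-definable over $\M$, so $g\in\NN$. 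Combining the two, $f\leq_{\mathrm{lex}}g$ for every $g\in[T]$ with $g\leq_{\T}\TJ^{k-1}(T\oplus f)$; as $f$ itself lies in this set, $f$ is a $\Delta^0_k$-leftmost path of $T$. Since $T$ was arbitrary, $\Delta^0_k\LPP$ follows.

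The two delicate points I expect to need care are: (i) checking that the transfinite induction actually present in $\M$ (namely $\Delta^0_k(\Pi^1_1)\hyp\TI_0$ together with the closure facts above) suffices for the $\Pi^0_k(\Pi^1_1)$ instance ``$x\notin\mathrm{ran}(g)$'' used in the agreement step; and (ii) the complexity bookkeeping that $k-1$ iterated Turing jumps of a $\Delta^0_1(\Pi^1_1)$-definable set keep the total functions Turing-below them inside $\Delta^0_k(\Pi^1_1)$. The rest is routine given the lemmas already established.
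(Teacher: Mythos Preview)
Your proposal is essentially the paper's proof: build $\NN$ from the $\Delta^0_k(\Pi^1_1)$-definable classes of $\M$, establish $\WO$-absoluteness between $\M$ and $\NN$ via the transfinite induction available in $\M$, invoke Lemma~\ref{Lem conservertivity}, form the pruned subtree $U=S$ of nodes believed extendible in $\M$, and take its leftmost path. Your delicate point~(ii) is exactly the paper's line ``there exists a $\Delta^0_k(\Pi^1_1)$ formula $\psi$ such that $\TJ^{k-1}(S)=\{x:\M\models\psi(x)\}$'', and is routine bookkeeping.

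The one place where your argument does not quite close is the point you yourself flag as~(i). The formula ``$x\notin\mathrm{ran}(g)$'' is $\forall n\,(g(n)\neq x)$ with $g$ a $\Delta^0_k(\Pi^1_1)$-definable function; this is genuinely only $\Pi^0_k(\Pi^1_1)$ over $\M$, and there is no reason $\Delta^0_k(\Pi^1_1)\hyp\TI$ should cover it. The paper avoids this by reading $\NN\not\models\WO(L)$ as the existence of a nonempty $Y\in\NN$, $Y\subseteq|L|$, with no $<_L$-minimum. Since $Y$ is literally an element of $\NN$, it has by construction a pair of defining formulas $(\varphi_e,\psi_e)$ with $\varphi_e\in\Sigma^0_k(\Pi^1_1)$, $\psi_e\in\Pi^0_k(\Pi^1_1)$, and $\M\models\forall x(\varphi_e(x)\leftrightarrow\psi_e(x))$; hence ``$i\notin Y$'' is $\Delta^0_k(\Pi^1_1)$ in $\M$ on the nose, and one runs $\Delta^0_k(\Pi^1_1)\hyp\TI$ along $L$ for that formula to conclude $Y=\varnothing$, a contradiction. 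Replace your descending-sequence step by this, and your proof coincides with the paper's.
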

\begin{proof}
  Since $\RFN(\Delta^0_k(\Pi^1_1)\hyp\TI_0)$ implies $\ACA_0^
  +$, we may assume $\ACA_0^+$.

  Let $T$ be an ill-founded tree with a path $f$.
  Take an $\omega$-model $\M$ such that $\M \models \Delta^0_k(\Pi^1_1)\hyp\TI_0$ and $T,f \in \M$.

  Let $\langle (\varphi_e(x),\psi_e(x)) \rangle_e$ be an enumeration of $\Delta^0_k(\Pi^1_1)\hyp\text{in}\hyp\M$ formulas. That is, each $\varphi_e$ is $\Sigma^0_k(\Pi^1_1)$ and $\psi_e$ is $\Pi^0_k(\Pi^1_1)$ having the same parameters from $\M$, and
  $$ \M \models \forall x (\varphi_e(x) \leftrightarrow \psi_e(x)).$$

  Define $\NN_{e} = \{x: \M \models \varphi_e(x)\}$ and $\NN = \langle \NN_e \rangle_e$.
  Since $\NN$ is closed under Turing reduction and Turing sum, $\NN$ is an $\omega$-model of $\RCAo$.
  
  We show that $\M \models \WO(L)$ if and only if $\NN \models \WO(L)$ for any linear order $L \in \M$.
  Let $L \in \M$ be a linear order. Assume $\M \models \WO(L)$ but $\NN \not \models \WO(L)$.
  Then, there exists $Y \in \NN$ such that $\varnothing \subsetneq Y \subseteq |L|$ and $Y$ has no minimal element. Here, $|L|$ denotes the field of $L$.
  Let $\varphi_e,\psi_e$ be the defining formulas of $Y$. That is,
  \begin{align*}
     &\M \models \varphi_e \leftrightarrow \psi_e, \\
     &\forall x (x \in Y \leftrightarrow \M \models \varphi_e(x)).
  \end{align*}
  We show that $\M \models \forall i \in |L| (\lnot \varphi_e(i))$ by transfinite induction along $L$.
  Let $i \in |L|$ such that $\M$ satisfies $\forall j <_L i\,\lnot\varphi_e(j)$.
  Then, $\forall j <_L i (j \not \in Y)$. Since $Y$ has no minimal element, $i \not \in Y$ and hence $\M$ satisfies
  $\lnot \varphi_e(i)$.
  We have $Y = \varnothing$, contradiction.

  We note that $\M$ and $\NN$ are $\exists f \Pi^0_2$ conservative by Lemma \ref{Lem conservertivity}.
  Define a tree $S = \{\sigma : \M \models \exists f \in [T] (\sigma \prec f)\}$.
  Then, $S = \{\sigma : \NN \models \exists f \in [T] (\sigma \prec f)\}$ by conservativity.
  In addition, $S \in \NN$ because $S$ is $\Sigma^1_1$ definable over $\M$.
  Let $\varphi(x)$ be a $\Sigma^1_1$ formula defining $S$.
  Then, there exists a $\Delta^0_{k}(\Pi^1_1)$ formula $\psi(x)$ such that
  $\TJ^{k-1}(S) = \{x : \M \models \psi(x)\}$.
  Thus, $\TJ^{k-1}(S)$ is in $\NN$.

  We note that $S$ is an ill-founded pruned tree by definition. Therefore, $S$ computes its leftmost path $g$.
  Then, $\TJ^{k-1}(g) \leq_{\T} \TJ^{k-1}(S)$ and hence $g$ and $\TJ^{k-1}(g) \in \NN$.
  We claim that $g$ is a $\Delta^0_k$-leftmost path of $T$.
  Indeed, if a path $h \in [T]$ satisfies $h \leq_{\T} \TJ^{k-1}(g)$, then $h \in \NN$ and hence $h \in [S]$.
  Then, $g$ is lexicographically smaller than $h$ because $g$ is the leftmost path of $S$.
\end{proof}

\begin{theorem}
  For $k \in \omega$ such that $k > 1$, the following assertions are equivalent over $\RCAo$
  \begin{enumerate}
    \item $\Sigma^0_k\LPP$,
    \item $\Ef\Delta^0_k\beta\hyp\RFN$,
    \item the $\omega$-model reflection of $\Pi^1_{k+1}\hyp\TI$,
    \item the $\omega$-model reflection of $\Delta^0_k(\Pi^1_1)\hyp\TI$.
  \end{enumerate}
\end{theorem}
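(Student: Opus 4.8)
The plan is to close the four assertions into a cycle of implications $(1)\Rightarrow(2)\Rightarrow(3)\Rightarrow(4)\Rightarrow(1)$, each link being essentially a repackaging of a lemma already established above; the only genuine work is bookkeeping about which base theory is available at each stage and about the numerical index. Observe at the outset that each of (1)--(4) implies $\ACAo$ (indeed $\ACAo^+$): this is explicit for (1), it is noted in Lemma~\ref{Lem ref of TI implies LPP} for (3) and (4), and for (2) it holds because a $\Delta^0_k\beta$-model together with its $\TJ$-realizer computes the $\omega$-jump of any of its members. Hence throughout we may work over $\ACAo^+$.

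For $(1)\Rightarrow(2)$, apply Lemma~\ref{Lem LPP implies betaRef} with its parameter instantiated to $k-1$: since $k>1$ this is legitimate, and it yields exactly that $\Sigma^0_k\LPP$ implies $\Ef\Delta^0_k\beta\hyp\RFN$. This index shift is the only place where the hypothesis $k>1$ is used; for $k=1$ one would be asking Lemma~\ref{Lem LPP implies betaRef} about $\Sigma^0_1\LPP$, which it does not cover. For $(2)\Rightarrow(3)$, fix a set $X$ and use $\Ef\Delta^0_k\beta\hyp\RFN$ to obtain a $\Delta^0_k\beta$-model $\M\ni X$ equipped with computable realizers $f_{\TJ},f_{\oplus},f_{\Sigma^1_1}$. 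Being a realizable jump ideal, $\M$ is (effectively) an $\omega$-model of $\ACAo$, by the proposition supplying $\Pi^0_n$ truth valuations; and by Lemma~\ref{Lem betamodel is a model of TI} it further satisfies $\Pi^1_{k+1}\hyp\TI$. Thus $X\in\M\models\ACAo+\Pi^1_{k+1}\hyp\TI$, which is the required instance of the $\omega$-model reflection of $\Pi^1_{k+1}\hyp\TI$.

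For $(3)\Rightarrow(4)$, fix $X$ and let $\M\ni X$ with $\M\models\ACAo+\Pi^1_{k+1}\hyp\TI$ be furnished by (3). Since $\ACAo$ is available and $\M$ is a coded $\omega$-model, $\M$ satisfies full second-order induction, so the syntactic results of the previous section — whose base theory is $\RCAo+\Sigma^1_\infty\hyp\IND$ — may be applied inside $\M$; in particular Lemma~\ref{Lem Pi0n(Pi11) and Pi0n+1} shows that in $\M$ every $\Pi^0_k(\Pi^1_1)$ formula, and hence every $\Delta^0_k(\Pi^1_1)$ formula, is equivalent to a $\Pi^1_{k+1}$ formula. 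Consequently transfinite induction in $\M$ for $\Delta^0_k(\Pi^1_1)$ formulas follows from $\M\models\Pi^1_{k+1}\hyp\TI$, so $X\in\M\models\ACAo+\Delta^0_k(\Pi^1_1)\hyp\TI$, giving the $\omega$-model reflection of $\Delta^0_k(\Pi^1_1)\hyp\TI$. Finally $(4)\Rightarrow(1)$ is Lemma~\ref{Lem ref of TI implies LPP} combined with the equivalence of $\Delta^0_k\LPP$ and $\Sigma^0_k\LPP$ over $\RCAo$, and the cycle is complete.

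No single step here carries real mathematical weight: the three substantial constructions — extracting a $\Delta^0_k\beta$-model with computable realizers from a $\Delta^0_{k+1}$-leftmost path, checking that such a model models $\Pi^1_{k+1}\hyp\TI$, and recovering a $\Delta^0_k$-leftmost path from a reflecting model of $\Delta^0_k(\Pi^1_1)\hyp\TI$ — have already been carried out in Lemmas~\ref{Lem LPP implies betaRef}, \ref{Lem betamodel is a model of TI} and \ref{Lem ref of TI implies LPP}. The two points demanding attention are organizational: the shift of index when invoking Lemma~\ref{Lem LPP implies betaRef} (which is what pins the theorem to $k>1$), and the verification that the $\omega$-models delivered by the reflection principles in (3) and (4) satisfy enough induction for the $\Delta^0_k(\Pi^1_1)$-calculus of the previous section to be run inside them — which is fine once one recalls that over $\ACAo$ every coded $\omega$-model satisfies full second-order induction. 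If forced to name the one place a subtlety might hide, it is this last observation, on which the implication $(3)\Rightarrow(4)$ silently relies.
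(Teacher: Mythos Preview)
Your proof is correct and follows exactly the paper's approach: the same cycle $(1)\Rightarrow(2)\Rightarrow(3)\Rightarrow(4)\Rightarrow(1)$ using Lemmas~\ref{Lem LPP implies betaRef}, \ref{Lem betamodel is a model of TI}, \ref{Lem Pi0n(Pi11) and Pi0n+1}, and \ref{Lem ref of TI implies LPP} respectively, with the same observation that the restriction $k>1$ enters only through the index shift in invoking Lemma~\ref{Lem LPP implies betaRef}. Your extra care in $(3)\Rightarrow(4)$---noting that a coded $\omega$-model satisfies full second-order induction because relativized set quantifiers become number quantifiers over indices, so that Lemma~\ref{Lem Pi0n(Pi11) and Pi0n+1} applies inside $\M$---makes explicit a point the paper leaves implicit, and is correct.
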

\begin{proof}
  We note that each assertion includes $\ACAo$. Therefore, we may assume that we work in $\ACAo$.

  The implication $(1) \to (2)$ is Lemma \ref{Lem LPP implies betaRef}.
  The implication $(2) \to (3)$ is immediate form Lemma \ref{Lem betamodel is a model of TI}.
  The implication  $(3) \to (4)$ follows from Lemma \ref{Lem Pi0n(Pi11) and Pi0n+1}.
  The implication  $(4) \to (1)$ is Lemma \ref{Lem ref of TI implies LPP}.
 \end{proof}

  \begin{remark}
 We note that the implications  $(2) \to (3) \to (4) \to (1)$ still hold for $n =1$.
 We do not know either $(1) \to (2)$ holds for $n=1$.
  \end{remark}

  \begin{remark}
    In \cite{Towsner_TLPP}, Towsner proved that 
    \begin{enumerate}
      \item $\RCAo + \Sigma^0_{k+2}\LPP$ implies the existence of an $\omega$-model of $\Pi^0_{k}(\Pi^1_1)\hyp\TI$ (Theorem 4.3),
      \item $\RCAo + \Pi^0_{k+2}(\Pi^1_1)\hyp\TI$ proves $\Sigma^0_k\LPP$ (Theorem 6.1).
    \end{enumerate}
    The previous theorem can be seen as a refinement of these results.
  \end{remark}

  At the end of this section, we see some applications of the previous theorem.

  \begin{corollary}
    Let $k \in \omega$ such that $k > 1$. 
    Then $\Sigma^0_{k}\LPP$ is equivalent to $\RFN(\Pi^1_{k+2}\hyp\RFN_0)$ over $\RCAo$.
  \end{corollary}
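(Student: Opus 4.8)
The plan is to obtain the corollary by chaining the preceding theorem with the J\"ager--Strahm characterization of $\Pi^1_{n+1}\hyp\RFN$ recalled above. The first step is the standard observation that, like the statements in the previous theorem, $\RFN(\Pi^1_{k+2}\hyp\RFN_0)$ implies $\ACAo$: given an arithmetic formula with a set parameter $X$, place $X$ in a coded $\omega$-model $\M\models\ACAo$, form the corresponding set inside $\M$ by arithmetic comprehension, and pull it back to the ground model using the absoluteness of arithmetic formulas for $\omega$-models. Hence it suffices to prove the equivalence over $\ACAo$.

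Working over $\ACAo$, I would apply the theorem of J\"ager and Strahm \cite{Jager_and_Strahm} with $n=k+1$ (legitimate since $k>1$ gives $k+1>0$): it yields that $\ACAo$ proves, instance by instance, the biconditional $\Pi^1_{k+2}\hyp\RFN \leftrightarrow \Pi^1_{k+1}\hyp\TI$. Since a coded $\omega$-model $\M$ with $\M\models\ACAo$ satisfies every consequence of $\ACAo$, in particular every instance of this biconditional, it follows that $\M$ is an $\omega$-model of $\ACAo+\Pi^1_{k+2}\hyp\RFN$ exactly when it is an $\omega$-model of $\ACAo+\Pi^1_{k+1}\hyp\TI$. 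Consequently $\RFN(\Pi^1_{k+2}\hyp\RFN_0)$ and $\RFN(\Pi^1_{k+1}\hyp\TI)$ assert the existence of $\omega$-models of one and the same theory, so they are equivalent over $\RCAo$. Combining this with the previous theorem, which identifies $\RFN(\Pi^1_{k+1}\hyp\TI)$ with $\Sigma^0_k\LPP$ for $k>1$, completes the argument.

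The only point needing a little care is the transfer of the \emph{provable} schema equivalence $\ACAo\vdash\Pi^1_{k+2}\hyp\RFN\leftrightarrow\Pi^1_{k+1}\hyp\TI$ to the \emph{semantic} statement that an $\omega$-model of $\ACAo$ validates one schema precisely when it validates the other. This is nothing more than the familiar fact that satisfaction in a coded $\omega$-model is closed under logical deduction from a base theory it models, which has already been used tacitly in the preceding arguments (e.g.\ in the repeated reduction ``we may assume $\ACAo$'' and in the proof for the $n$-th pseudo hyperjump level). So I do not expect a genuine obstacle here: the corollary is essentially a bookkeeping consequence of the previous theorem and the J\"ager--Strahm result.
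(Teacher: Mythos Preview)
Your proposal is correct and follows essentially the same route as the paper: the paper's proof is the single line ``immediate from the fact that $\Pi^1_{k+1}\hyp\TI$ is equivalent to the $\Pi^1_{k+2}$ reflection,'' and you have simply unpacked this by invoking the J\"ager--Strahm theorem with $n=k+1$ and transferring the provable schema equivalence to coded $\omega$-models. Your preliminary reduction to $\ACAo$ is harmless but not actually needed, since the equivalence $\RFN(T)\leftrightarrow\RFN(T')$ already holds over $\RCAo$ whenever $\ACAo+T$ and $\ACAo+T'$ have the same theorems.
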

  \begin{proof}
    It is immediate from the fact that $\Pi^1_{k+1}\hyp\TI$ is equivalent to the $\Pi^1_{k+2}$ reflection over $\RCAo$.
  \end{proof}

  \begin{corollary}\label{Cor LPP proves the omega-ref of LPP}
    Let $n \in \omega$ such that $n > 0$.
    Then, $\Sigma^0_{n+1}\LPP$ proves the $\omega$-model reflection of $\Sigma^0_n\LPP$ over $\RCAo$. Hence, $\Sigma^0_{n+1}\LPP$ is strictly stronger than $\Sigma^0_n\LPP$.
  \end{corollary}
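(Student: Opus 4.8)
The plan is to deduce everything from the equivalence just established, together with the standard fact that one extra level of transfinite induction reflects the previous level onto a coded $\omega$-model. By the previous theorem, $\Sigma^0_{n+1}\LPP$ implies $\RFN(\Pi^1_{n+2}\hyp\TI)$ (apply the theorem with $k = n+1$, which is $\geq 2$), and $\RFN(\Pi^1_{n+1}\hyp\TI)$ implies $\Sigma^0_n\LPP$ (for $n>1$ this is the theorem with $k=n$; for $n=1$ it follows from the chain $(3)\to(4)\to(1)$, which the remark after that theorem notes remains valid at that level), the latter implication being provable in $\RCAo$. So it suffices to establish
\begin{align*}
  \ACAo+\Pi^1_{n+2}\hyp\TI\vdash\RFN(\Pi^1_{n+1}\hyp\TI).
\end{align*}
Granting this, one finishes as follows. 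Work in $\RCAo+\Sigma^0_{n+1}\LPP$ and fix a set $X$; choose a coded $\omega$-model $\M$ with $X\in\M$ and $\M\models\ACAo+\Pi^1_{n+2}\hyp\TI$. Since $\ACAo$ proves $\Pi^1_{n+2}\hyp\TI\to\RFN(\Pi^1_{n+1}\hyp\TI)$, this holds in $\M$; and since $\RCAo$ proves $\RFN(\Pi^1_{n+1}\hyp\TI)\to\Sigma^0_n\LPP$, this too holds in $\M$. Hence $\M\models\ACAo+\Sigma^0_n\LPP$, so every set lies in a coded $\omega$-model of $\ACAo+\Sigma^0_n\LPP$, which is $\RFN(\Sigma^0_n\LPP)$.

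To prove the displayed implication I would pass through the reflection schemata. By the J\"ager--Strahm theorem (\cite{Jager_and_Strahm}), $\Pi^1_{n+1}\hyp\TI$ is equivalent over $\ACAo$ to the schema $\Pi^1_{n+2}\hyp\RFN$, and $\Pi^1_{n+2}\hyp\TI$ is equivalent over $\ACAo$ to $\Pi^1_{n+3}\hyp\RFN$. Using a universal $\Pi^1_{n+1}$ formula (in analogy with the $\pi^0_n$ of this paper), every instance of the schema $\Pi^1_{n+2}\hyp\RFN$ is, uniformly in its index, a $\Pi^1_{n+3}$ sentence, so the entire schema is equivalent over $\ACAo$ to a single $\Pi^1_{n+3}$ sentence $\rho$. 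Now $\ACAo+\Pi^1_{n+2}\hyp\TI$, being $\ACAo+\Pi^1_{n+3}\hyp\RFN$, proves every instance of $\Pi^1_{n+2}\hyp\RFN$ (each such instance is, up to provable equivalence, an instance of $\Pi^1_{n+3}\hyp\RFN$, since $\Pi^1_{n+2}\subseteq\Pi^1_{n+3}$), hence proves $\rho$. Applying the schema $\Pi^1_{n+3}\hyp\RFN$ to the true $\Pi^1_{n+3}$ formula $\rho\land(X=X)$ produces, for each set $X$, a coded $\omega$-model $\M$ with $X\in\M$ and $\M\models\ACAo+\rho$; since $\M\models\ACAo$, the two equivalences relativize to $\M$, giving $\M\models\ACAo+\Pi^1_{n+1}\hyp\TI$, which is exactly $\RFN(\Pi^1_{n+1}\hyp\TI)$.

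For the ``hence'' clause of the statement: $\Sigma^0_{n+1}\LPP$ proves $\Sigma^0_n\LPP$, because a $\Delta^0_{n+1}$-leftmost path of a tree $T$ is the least element of $\{g\in[T]:g\leq_{\T}\TJ^{n}(T\oplus f)\}$, which contains $\{g\in[T]:g\leq_{\T}\TJ^{n-1}(T\oplus f)\}$, so it is a fortiori a $\Delta^0_n$-leftmost path. Moreover $\RCAo+\Sigma^0_{n+1}\LPP$ and $\RCAo+\Sigma^0_n\LPP$ are recursive consistent extensions of $\ACAo$ (recall that $\Sigma^0_{k+1}\LPP$ includes $\ACA_0^+$, and both theories are true in the full model $(\N,\mathcal{P}(\N))$, the actual leftmost path of an ill-founded tree witnessing each $\LPP$). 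Hence, by the proposition of Section~2 relating $\omega$-model reflection to relative strength, $\Sigma^0_{n+1}\LPP$ is strictly stronger than $\Sigma^0_n\LPP$.

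The main obstacle is the displayed implication $\ACAo+\Pi^1_{n+2}\hyp\TI\vdash\RFN(\Pi^1_{n+1}\hyp\TI)$. The delicate point is that one must reflect onto an $\omega$-model of the \emph{entire} schema $\Pi^1_{n+1}\hyp\TI$, not merely of finitely many of its instances; collapsing this schema to the single $\Pi^1_{n+3}$ sentence $\rho$ by means of a universal formula is the device that makes the reflection schema $\Pi^1_{n+3}\hyp\RFN$ directly applicable. A secondary bookkeeping point is that ``$\M\models\varphi(X)$'' for a coded $\omega$-model already carries the information $X\in\M$, and that the J\"ager--Strahm and universal-formula equivalences are provable in a base theory weak enough to relativize to every coded $\omega$-model of $\ACAo$.
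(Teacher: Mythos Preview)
Your argument is correct and follows essentially the same route as the paper's proof: both pass from $\Sigma^0_k\LPP$ to $\RFN(\Pi^1_{k+1}\hyp\TI)$ via the main equivalence theorem, convert transfinite induction to the reflection schema via J\"ager--Strahm, and then exploit that $\Pi^1_{m}\hyp\RFN$ is axiomatizable at level $\Pi^1_{m+1}$, so one further level of the reflection schema produces an $\omega$-model of it. Your write-up is somewhat more explicit than the paper's---in particular you spell out the collapse to a single sentence $\rho$, handle the boundary case $n=1$ by invoking the remark that $(3)\to(4)\to(1)$ still holds there, and justify the ``strictly stronger'' clause via the proposition in Section~2---but the underlying idea is the same. (A trivial slip: where you write ``a universal $\Pi^1_{n+1}$ formula'' you presumably mean $\Pi^1_{n+2}$, since you are parametrizing the $\Pi^1_{n+2}$ hypotheses of the schema $\Pi^1_{n+2}\hyp\RFN$; this does not affect the argument.)
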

  \begin{proof}
    Since $\Pi^1_{n+2}\hyp\RFN_0$ proves a $\Pi^1_{n+2}$ axiomatizable theory $\Pi^1_{n+1}\hyp\RFN_0$, it proves the $\omega$-model reflection of $\Pi^1_{n+1}\hyp\RFN_0$.
    Hence, we have the following implications.
    \begin{align*}
      \Sigma^0_{n+1}\LPP \to \RFN(\Pi^1_{n+2}\hyp\RFN_0)
      \to \RFN(\RFN(\Pi^1_{n+1}\hyp\RFN_0)) \to \RFN(\Sigma^0_{n}\LPP).
    \end{align*}
  \end{proof}

  \begin{remark}
    Our $\RFN(\Pi^1_{n+2}\RFN_0)$ is the same as Probst's $\mathsf{p_1}\mathsf{p_{n+2}}(\ACAo)$.
    He studied the proof theoretic strength of $\mathsf{p_1}\mathsf{p_{n+2}}(\ACAo)$ in his habilitation thesis \cite{Probst}.
  \end{remark}



  \section{Weihrauch degrees}
  In \cite{suzuki_yokoyama_fp}, the author and Yokoyama introduced a Weihrauch problem corresponding to $\Sigma^0_n\LPP$ and studied the computability-theoretic strength of it. In this section, we refine the results in \cite{suzuki_yokoyama_fp}. We prove that $\Sigma^0_{n+1}\LPP$ is strictly stronger than $\Sigma^0_{n}\LPP$ in the sense of Weihrauch reducibility.

  \begin{definition}
    A Weihrauch problem is a partial multivalued function on the Cantor space $2^{\omega}$.
  \end{definition}
  We sometimes identify a Weihrauch problem $\P$ and a partial function $\P'$ from $2^{\omega}$ to $\mathcal{P}(2^{\omega})$ defined by $\P'(X) = \{Y \subseteq \omega: Y \text{ is an output of $\P$ at $X$}\}$. In this sense, we write $Y \in \P(X)$ to mean $Y$ is an output of $\P$ at $X$.

  \begin{definition}
    Let $\P$ and $\Q$ be Weihrauch problems.
    We say $\P$ is Weihrauch reducible to $\Q$ (written $\P \leq_{\W} \Q$) if there are (partial) Turing functionals $\Phi$ and $\Psi$ such that 
    \begin{align*}
      \forall X \in \dom(\P)(\Phi(X)\mathord{\downarrow} \in \dom(\Q) \land 
      \forall Y \in \Q(\Phi(X)) (\Psi(X,Y)\mathord{\downarrow} \in \P(X))
      ).
    \end{align*}
    We write $\P <_{\W} \Q$ to mean 
    $\P \leq_{\W} \Q$ but $\Q \not \leq_{\W} \P$.

    We say $\P$ and $\Q$ are Weihrauch equivalent (written $\P \equiv_{\W} \Q$) if both of $\P \leq_{\W} \Q$ and $\Q \leq_{\W} \P$ hold.
  \end{definition}

  \begin{definition}
      We define Weihrauch problems $\Sigma^0_n\LPP,\Delta^0_n\LPP$ and $\Delta^0_n\overline{\LPP}$ as follows.
    \begin{itembox}[l]{$\Sigma^0_n\LPP$}
  \begin{description}
    \item[ \sf Input] An ill-founded tree $T \subseteq \omega^{<\omega}$ and a path $f$ of $T$.
    \item[ \sf Output] A path $g$ of $T$ such that $g$ is leftmost in $\Sigma^0_{n}(T \oplus f \oplus g)$.
  \end{description}
\end{itembox}

    \begin{itembox}[l]{$\Delta^0_n\LPP$}
  \begin{description}
    \item[ \sf Input] An ill-founded tree $T \subseteq \omega^{<\omega}$ and a path $f$ of $T$.
    \item[ \sf Output] A path $g$ of $T$ such that $g$ is leftmost in $\Delta^0_{n}(T \oplus f \oplus g)$.
  \end{description}
\end{itembox}

    \begin{itembox}[l]{$\Delta^0_n\overline{\LPP}$}
  \begin{description}
    \item[ \sf Input] An ill-founded tree $T \subseteq \omega^{<\omega}$ and a path $f$ of $T$.
    \item[ \sf Output] A path $g$ of $T$ such that $g$ is leftmost in $\Delta^0_{n}(g)$.
  \end{description}
\end{itembox}
\end{definition}

  \begin{lemma}
    $\Sigma^0_n\LPP\equiv_{\W} \Delta^0_n\LPP \equiv_{\W} \Delta^0_n\overline{\LPP}$.
  \end{lemma}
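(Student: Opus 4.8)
The plan is to prove the three Weihrauch equivalences by a cycle $\Delta^0_n\overline{\LPP} \leq_{\W} \Sigma^0_n\LPP \leq_{\W} \Delta^0_n\LPP \leq_{\W} \Delta^0_n\overline{\LPP}$, exploiting that on any input $(T,f)$ the three notions of ``leftmost'' coincide up to the ambient oracle. First I would observe that $\Delta^0_n\LPP \leq_{\W} \Sigma^0_n\LPP$ and the reverse are essentially trivial with identity forward and backward functionals: a path $g \in [T]$ is leftmost in $\Sigma^0_n(T\oplus f\oplus g)$ iff it is leftmost in $\Delta^0_n(T\oplus f\oplus g)$, since a total function that is $\Sigma^0_n$ in an oracle is already $\Delta^0_n$ in that oracle (this is exactly the remark preceding the definition of $\Delta^0_k\LPP$ in the excerpt). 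So any valid $\Sigma^0_n\LPP$-output is a valid $\Delta^0_n\LPP$-output and conversely, and $\Sigma^0_n\LPP \equiv_{\W} \Delta^0_n\LPP$ needs nothing more.

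The substantive direction is $\Delta^0_n\LPP \equiv_{\W} \Delta^0_n\overline{\LPP}$, i.e.\ removing the dependence on $T\oplus f$ from the definability bound. For $\Delta^0_n\overline{\LPP} \leq_{\W} \Delta^0_n\LPP$ the forward functional is the identity and the backward functional is the identity as well: if $g$ is leftmost in $[T]\cap\{h : h\leq_{\T}\TJ^{n-1}(h)\}$ then a fortiori it is leftmost among those $h\leq_{\T}\TJ^{n-1}(T\oplus f\oplus h)$? This inclusion goes the wrong way, so here I would instead invoke the reformulation already recorded in the excerpt (the sentence ``$\Delta^0_k\LPP$ is equivalent to the assertion that any ill-founded tree has a path $f$ that is the smallest in $\{g\in[T]:g\leq_{\T}\TJ^{k-1}(f)\}$'') and reuse its proof idea at the level of instances. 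The key trick, which is the standard one for these $\overline{\phantom{x}}$-variants, is coding: given $(T,f)$, build a new tree $T'$ each of whose nodes carries a copy of the relevant initial segment of $T\oplus f$, so that every path of $T'$ uniformly computes $T\oplus f$ and vice versa the leftmost path of $T$ is recoverable from the leftmost path of $T'$; then being leftmost in $\Delta^0_n(g')$ for a path $g'$ of $T'$ forces leftmost-in-$\Delta^0_n(T\oplus f\oplus g)$ for the decoded path $g$ of $T$, because $T\oplus f \leq_{\T} g'$. This is precisely the role played by $\Phi_T(X)$ in the excerpt (``each path of $\Phi_T(X)$ computes $X$''), applied here with $X = T\oplus f$ padded onto the given tree rather than from scratch.

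Concretely, for $\Delta^0_n\LPP \leq_{\W} \Delta^0_n\overline{\LPP}$: the forward functional $\Phi$ sends $(T,f)$ to the tree $T' = \{\sigma \in (\omega\times\omega)^{<\omega} : \pi_1(\sigma)\in T \text{ and } \pi_2(\sigma) = (T\oplus f){\restriction}|\sigma|\}$ (written so that the lexicographic order on $T'$ projects monotonically to that on $T$ in the first coordinate and is trivial in the second), which is ill-founded with witness path $(f, T\oplus f)$; the backward functional $\Psi$ takes a $\Delta^0_n(g')$-leftmost path $g'$ of $T'$ and returns its first coordinate $g = \pi_1\circ g'$. Then $g\in[T]$, and $T\oplus f\leq_{\T} g'\leq_{\T} g'$ is visible from $g'$ itself, so $\Delta^0_n(g') = \Delta^0_n(T\oplus f\oplus g')\supseteq\Delta^0_n(T\oplus f\oplus g)$, whence any $h\in[T]$ with $h\leq_{\T}\TJ^{n-1}(T\oplus f\oplus h)$ lifts to $h' = (h, T\oplus f)\in[T']$ with $h'\leq_{\T}\TJ^{n-1}(h')$, and minimality of $g'$ gives $g\leq_{\mathrm{lex}} h$. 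The reverse reduction $\Delta^0_n\overline{\LPP}\leq_{\W}\Delta^0_n\LPP$ is then immediate since on this coded instance the two output conditions coincide (for $T'$, leftmost-in-$\Delta^0_n(g')$ and leftmost-in-$\Delta^0_n(T'\oplus f'\oplus g')$ agree as $T'\oplus f'\leq_{\T} g'$), so identity functionals suffice. The main obstacle is purely bookkeeping: arranging the pairing on $T'$ so that the lexicographic orders match up correctly and verifying the Turing-degree inclusions are uniform, i.e.\ witnessed by fixed indices independent of the instance, as required by the definition of $\leq_{\W}$; there is no genuine difficulty beyond getting these indices right.
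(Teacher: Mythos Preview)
Your approach is correct and matches the paper's: the first equivalence is dispatched by the same observation that a total function $\Sigma^0_n$ in an oracle is already $\Delta^0_n$ in it, and for $\Delta^0_n\LPP\equiv_{\W}\Delta^0_n\overline{\LPP}$ the paper simply cites \cite[Lemma~6.5]{suzuki_yokoyama_fp}, whose content is exactly the code-$T\oplus f$-into-every-path construction you spell out. Two small cleanups: in the key step you wrote $h\leq_{\T}\TJ^{n-1}(T\oplus f\oplus h)$ and $h'\leq_{\T}\TJ^{n-1}(h')$ where the oracle should be the candidate leftmost path $g$ (resp.\ $g'$), not the competitor $h$; and the direction $\Delta^0_n\overline{\LPP}\leq_{\W}\Delta^0_n\LPP$ needs no coding at all, since being leftmost over the larger class $\Delta^0_n(T\oplus f\oplus g)\supseteq\Delta^0_n(g)$ is already the stronger output condition, so identity functionals suffice.
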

  \begin{proof}
  As a path is $\Sigma^0_n$-definable if and only if it is $\Delta^0_n$-definable, $\Sigma^0_n\LPP$ and $\Delta^0_n\LPP$ are the same.
  For the equivalence of $\Delta^0_n\LPP$ and $\Delta^0_n\overline{\LPP}$, see \cite[Lemma 6.5]{suzuki_yokoyama_fp}
    \end{proof}

    As the same proof of Lemma \ref{Lem LPP implies betaRef}, the following problem is Weihrauch reducible to $\Sigma^0_n\LPP$ for $n > 1$.
     \begin{itembox}[l]{$\Ef\Delta^0_n\beta\RFN$}
  \begin{description}
    \item[ \sf Input] Any set $X$.
    \item[ \sf Output] A $\Delta^0_n\beta$-model $\M$ and computable functions $f_{\TJ},f_{\oplus},f_{\Sigma^1_1}$ such that 
    \begin{align*}
      X \in \M, && (f_{\TJ},f_{\oplus}) \forces_{\M} \ACAo, && f_{\Sigma^1_1} \forces_{\M} \exists X \Pi^0_2.
    \end{align*}
  \end{description}
\end{itembox}

    We see that the converse reduction also holds.

\begin{theorem}\label{thm LPP and RFN}
  For $n > 1$, $\Sigma^0_n\LPP \equiv_{\W} \Ef\Delta^0_n\beta\RFN$.
\end{theorem}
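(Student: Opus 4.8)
The reduction $\Ef\Delta^0_n\beta\RFN\leq_{\W}\Sigma^0_n\LPP$ has been observed above, so the content of the theorem is the converse $\Sigma^0_n\LPP\leq_{\W}\Ef\Delta^0_n\beta\RFN$. Since $\Sigma^0_n\LPP\equiv_{\W}\Delta^0_n\overline{\LPP}$ by the previous lemma, it is enough to reduce $\Delta^0_n\overline{\LPP}$ to $\Ef\Delta^0_n\beta\RFN$, and the plan is to uniformize the argument of Lemma~\ref{Lem ref of TI implies LPP}. Given an ill-founded tree $T$ with a path $f$, the forward functional passes $T\oplus f$ to $\Ef\Delta^0_n\beta\RFN$; it receives a $\Delta^0_n\beta$-model $\M$ with computable realizers $f_{\TJ},f_{\oplus},f_{\Sigma^1_1}$ such that $T\oplus f\in\M$. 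After a routine reindexing we may assume $T$ and $f$ are fixed columns of $\M$; in particular $f\in[T]$ is a true $\Pi^0_1$ fact, so $\M\models f\in[T]$ because the $\Pi^0_1$-truth valuation of the realizable jump ideal $\M$ is correct.

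From $\M$ together with the (computable) realizers, the backward functional forms the tree
\[
  S=\{\sigma:\M\models\exists g'\,(g'\in[T]\land\sigma\prec g')\},
\]
which is $\M$-computable by Lemma~\ref{Lem definability} (applied with the column $T$ as parameter), hence $S\leq_{\T}\M$. Exactly as in Lemma~\ref{Lem ref of TI implies LPP}: $S$ is pruned, since any $\M$-witness $g'$ for $\sigma\in S$ shows $\sigma{}^\frown\langle g'(|\sigma|)\rangle\in S$; $S$ is ill-founded, since $f\in\M\cap[T]$ gives $\M\models\exists g'(g'\in[T]\land f[m]\prec g')$ for every $m$, so $f\in[S]$; and $[S]\subseteq[T]$, since any $\M$-witness for a node $h[m]$ of a path $h\in[S]$ really lies in $[T]$ and extends $h[m]$, whence $h[m]\in T$. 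Because $S$ is pruned and ill-founded and $S\leq_{\T}\M$, its leftmost path $g$ exists and satisfies $g\leq_{\T}S\leq_{\T}\M$; the functional returns $g$.

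It remains to check that $g$ is a valid output of $\Delta^0_n\overline{\LPP}$. We have $g\in[S]\subseteq[T]$. Let $h\in[T]$ with $h\leq_{\T}\TJ^{n-1}(g)$; then $h\leq_{\T}\TJ^{n-1}(\M)$ since $g\leq_{\T}\M$. Fixing $m$ and applying the $\Delta^0_n\beta$-model property of $\M$ to the $\Pi^0_1$ (hence $\Pi^0_2$) formula $\varphi(X)\equiv X\in[T]\land h[m]\prec X$, whose set parameter $T$ is a column of $\M$, with the witness $X=h\leq_{\T}\TJ^{n-1}(\M)$, we obtain some $i$ with $\M_i\in[T]$ and $h[m]\prec\M_i$; by correctness of the $\Pi^0_1$-valuation, $\M\models\M_i\in[T]\land h[m]\prec\M_i$, so $\M\models\exists g'(g'\in[T]\land h[m]\prec g')$, i.e.\ $h[m]\in S$. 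As $m$ was arbitrary, $h\in[S]$, and therefore $g\leq_{\mathrm{lex}}h$ because $g$ is the leftmost path of $S$. The two functionals are visibly uniform, so $\Delta^0_n\overline{\LPP}\leq_{\W}\Ef\Delta^0_n\beta\RFN$, which finishes the proof.

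The step I expect to require the most care is this last verification, in particular the use of the $\Delta^0_n\beta$-model property with a witness $h$ that is only $\leq_{\T}\TJ^{n-1}(\M)$ rather than a member of $\M$, followed by the transfer from truth in the standard model to $\M\models(\cdot)$ via correctness of the $\Pi^0_1$-valuation; the bookkeeping needed to exhibit $T$, $f$ and $S$ inside $\M$ uniformly is routine but must be arranged explicitly. (In fact this reduction goes through for every $n\geq 1$; the hypothesis $n>1$ enters only through the already-observed reduction $\Ef\Delta^0_n\beta\RFN\leq_{\W}\Sigma^0_n\LPP$, which relies on Lemma~\ref{Lem LPP implies betaRef}.)
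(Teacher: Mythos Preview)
Your proof is correct and follows essentially the same approach as the paper's: form the pruned subtree $S=\{\sigma:\M\models\exists g'\in[T]\,(\sigma\prec g')\}$, compute it from $\M$ via Lemma~\ref{Lem definability}, and output its leftmost path. The paper compresses your final verification into the single observation that $S=\{\sigma\in T:\exists g'\in[T]\cap\Delta^0_n(\M)\,(\sigma\prec g')\}$ (so that $g$ is leftmost in $[T]\cap\Delta^0_n(\M)$, hence in $[T]\cap\Delta^0_n(g)$), whereas you unpack this nodewise using the $\Delta^0_n\beta$-property and absoluteness of $\Pi^0_1$ facts; but the content is identical.
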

\begin{proof}
  It is enough to show that $\Delta^0_n\overline{\LPP} \leq_{\W} \Ef\Delta^0_n\beta\RFN$ holds.
  Let $T$ be a tree and $f$ be its path. Let $\M, f_{\TJ},f_{\oplus},f_{\Sigma^1_1}$ be such that 
    \begin{align*}
      T,f \in \M, && (f_{\TJ},f_{\oplus}) \forces_{\M} \ACAo, && f_{\Sigma^1_1} \forces_{\M} \exists X \Pi^0_2.
    \end{align*}
    Define a subtree $S$ of $T$ as the set of $\sigma$ in $T$ such that $\M$ believes $\sigma$ is extended to a path. That is,  $S = \{\sigma \in T : \M \models \exists g \in [T] (\sigma \prec g)\}$.
    Then, $S$ is $\M$-computable by Lemma \ref{Lem definability}. Moreover, it is easy to see that there is a uniform procedure to make $S$ from $\M,f_{\TJ},f_{\oplus},f_{\Sigma^1_1}$.
    We now have 
    \begin{itemize}
      \item $S$ is ill-founded because $f \in [S]$,
      \item $S$ is pruned because of the definition, and 
      \item $S = \{\sigma \in T : \exists g \in [T] \cap \Delta^0_n(\M) (\sigma \prec g)\}$ because $\M$ is $\Delta^0_n\beta$-model.
    \end{itemize}
  Let $g \leq_{\T} S$ be the leftmost path of $S$.
  Then, $g$ is leftmost in $\Delta^0_n(\M)$. Since $g$ is $\M$-computable, $g$ is leftmost in $\Delta^0_n(g)$.
  This completes the proof.
\end{proof}

    We then show the separation of $\Sigma^0_{n+1}\LPP$ and $\Sigma^0_n\LPP$.
    For the separation, we use the $\omega$-model reflection of a Weihrauch problem introduced in \cite{suzuki_yokoyama_fp}.  
    \begin{definition}
      Let $\P$ be a Weihrauch problem.
      We define the $\omega$-model reflection of $\P$ as the following problem.
           \begin{screen}
  \begin{description}
    \item[ \sf Input] Any set $X$.
    \item[ \sf Output] A sequence $\M$  and functions $f_{\TJ},f_{\oplus}$ such that 
    \begin{align*}
      &X \in \M \,\&\, (f_{\TJ},f_{\oplus}) \forces_{\M} \ACAo \,\& \\
      &\forall Y \in \M \cap \dom(\P) \exists Z \in \M (Z \in \P(X))
    \end{align*}
  \end{description}
        \end{screen}
      
    \end{definition}

    We note that if there are arithmetical formulas $\theta,\eta$ such that 
    $\dom(\P) = \{X: \theta(X)\}$ and $\P(X) = \{Y : \eta(X,Y)\}$, then 
    the condition $\forall Y \in \M \cap \dom(\P) \exists Z \in \M (Z \in \P(X))$ is equivalent to 
    $\M \models \P$.
    It is also proved that in that case,
    the $\omega$-model reflection of the problem is strictly stronger than the problem. For the details, see Section 4 of \cite{suzuki_yokoyama_fp}. We note that the input and output conditions of $\Sigma^0_n\LPP$ can be written by arithmetical formulas.

  \begin{lemma}[$\ACAo$]\label{Lem model of LPP}
    Let $\M$ be a $\Delta^0_{n+1}\beta$-model and $f_{\TJ},f_{\oplus},f_{\Sigma^1_1}$ be computable functions such that 
    $(f_{\TJ},f_{\oplus}) \forces_{\M} \ACAo$ and $f_{\Sigma^1_1} \forces_{\M} \exists \Pi^0_2$.
    Then, $\M$ is a model of $\Sigma^0_n\LPP$. 
  \end{lemma}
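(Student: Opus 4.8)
The plan is to derive the statement from Lemma~\ref{Lem betamodel is a model of TI} together with the equivalence, over $\RCAo$, of $\Sigma^0_n\LPP$ with $\RFN(\Pi^1_{n+1}\hyp\TI)$, relativized to $\M$. Working in $\ACAo$, the first observation is that $\M$ is a coded $\omega$-model of $\ACAo$ (a jump ideal is such, provably in $\ACAo$), so in particular $\M \models \RCAo$; hence it suffices to show $\M \models \RFN(\Pi^1_{n+1}\hyp\TI)$, since over $\RCAo$ this is equivalent to $\Sigma^0_n\LPP$ for $n > 1$.

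To prove $\M \models \RFN(\Pi^1_{n+1}\hyp\TI)$, I would first apply Lemma~\ref{Lem betamodel is a model of TI} with $k = n+1$, using the computable realizers $f_{\TJ},f_{\oplus},f_{\Sigma^1_1}$ of $\M$, to get $\M \models \Pi^1_{n+2}\hyp\TI$. By the theorem of J\"ager and Strahm \cite{Jager_and_Strahm} (with their index taken to be $n+2$), $\Pi^1_{n+2}\hyp\TI$ is equivalent over $\ACAo$ to $\Pi^1_{n+3}\hyp\RFN$, so $\M \models \Pi^1_{n+3}\hyp\RFN$. Next, using a partial truth predicate for $\Pi^1_{n+1}$ formulas, I would express the scheme $\ACAo + \Pi^1_{n+1}\hyp\TI$ by a single $\Pi^1_{n+3}$ sentence $\tau$ with the property that, for any coded $\omega$-model $\NN$, $\NN \models \tau$ holds exactly when $\NN$ satisfies every instance of $\ACAo + \Pi^1_{n+1}\hyp\TI$; in particular $\M \models \tau$ because $\M \models \ACAo + \Pi^1_{n+2}\hyp\TI$. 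Applying the instance of $\Pi^1_{n+3}\hyp\RFN$ for the formula $\tau \land (X = X)$ inside $\M$ then yields $\M \models \forall X \exists \NN\,(X \in \NN \land \NN \models \ACAo + \Pi^1_{n+1}\hyp\TI)$, i.e.\ $\M \models \RFN(\Pi^1_{n+1}\hyp\TI)$; by the first paragraph this gives $\M \models \Sigma^0_n\LPP$.

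The step I expect to require the most care is the uniform reflection of the scheme $\Pi^1_{n+1}\hyp\TI$ into a single coded $\omega$-submodel of $\M$, namely verifying that the $\Pi^1_{n+3}$ sentence $\tau$ interacts correctly with the satisfaction relation $\models$ for $\M$ and for the reflected model; this is the routine partial-truth-predicate bookkeeping used for $\omega$-model reflections (cf.\ the proof of Theorem~\ref{thm InftyTI and RFN} and \cite[Section VII.2]{Simpson}). I would also remark why the more direct route is avoided: for an ill-founded $T \in \M$ one can form the $\M$-pruned subtree $S = \{\sigma \in T : \M \models \exists h\,(h \in [T] \land \sigma \prec h)\}$, which is computable from $\TJ(\M)$ and hence from $\TJ^{n}(\M)$ via the realizer $f_{\Sigma^1_1}$, and take its leftmost path $g \leq_{\T} \TJ^n(\M)$; but the natural definition of $g$ refers to the code of $\M$ itself rather than to a column of $\M$, so the $\Delta^0_{n+1}\beta$-property does not place $g$ in $\M$ directly.
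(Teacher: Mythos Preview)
Your argument is essentially correct and gives a genuinely different proof from the paper's. You reduce the lemma to results already established in Section~3: from Lemma~\ref{Lem betamodel is a model of TI} you get $\M \models \Pi^1_{n+2}\hyp\TI$, then J\"ager--Strahm gives $\M \models \Pi^1_{n+3}\hyp\RFN$, and reflecting the single $\Pi^1_{n+3}$ sentence $\tau$ that packages $\ACAo + \Pi^1_{n+1}\hyp\TI$ yields $\M \models \RFN(\Pi^1_{n+1}\hyp\TI)$, whence $\M \models \Sigma^0_n\LPP$ by the main equivalence theorem of Section~3. Two small points: first, you only argue for $n>1$, but the lemma is applied for all $n\geq 1$; the missing case is covered by the remark after the main theorem that the implication $\RFN(\Pi^1_{k+1}\hyp\TI)\to\Sigma^0_k\LPP$ persists for $k=1$. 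Second, the bookkeeping around $\tau$ and nested satisfaction is indeed routine for $\omega$-models, since only a partial satisfaction predicate of bounded complexity is needed.

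The paper instead gives a direct, self-contained argument that does not invoke the Section~3 equivalence. Given $T\in\M$, it builds (externally) the $\M$-computable leftmost path $g$ of the $\M$-pruned subtree and then \emph{packages} the tuple $g_0\oplus\cdots\oplus g_n$ with $g_i=\TJ^i(g)$; the condition ``$g_{i+1}=\TJ(g_i)$ for $i<n$ and $g_0$ is leftmost among paths $\leq_{\T} g_{n-1}$'' is $\Pi^0_2$ with the sole parameter $T\in\M$, and the witness lies below $\TJ^n(\M)$, so the $\Delta^0_{n+1}\beta$-property reflects it into $\M$. This is precisely the step your final paragraph claims cannot be done directly: the obstacle you identify (that $g$ is defined from the code of $\M$ rather than from a column) is real for the na\"ive formulation, but the packaging trick removes it by eliminating $\M$ from the parameters of the $\Pi^0_2$ condition. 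Your route is shorter given the earlier machinery; the paper's route is more elementary and exhibits the mechanism that makes the Weihrauch-level reduction go through.
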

  \begin{proof}
    Let $T \in \M$ be a tree such that $\M \models [T] \neq \varnothing$.
    Then, as in the proof of Theorem \ref{thm LPP and RFN}, there is a $\M$-computable function $g$ such that 
    $\forall f \leq_{\T} \TJ^n(g) (f \in [T] \to g \leq_{l} f)$. 
    In particular, we have $\forall f \leq_{\T} \TJ^{n-1}(g) (f \in [T] \to g \leq_{l} f)$. 
    Thus, we have 
    \begin{align*}
    (\ast) \ \ 
      \exists (g_0 \oplus g_1 \oplus \cdots \oplus g_n) \leq_{\T} \TJ^{n}(\M)
      \Bigl[&\bigwedge_{i < n} g_i = \TJ(g_{i-1}) \\ \land 
      & \forall f \leq_{\T} g_{n-1} (f \in [T] \to g_0 \leq_{\T} f \Bigr].
    \end{align*}
    by taking $g_i = \TJ^i(g)$.
    We note that the condition in the square bracket is $\Pi^0_2$ for $(g_0 \oplus g_1 \oplus \cdots \oplus g_n)$ because $g_n = \TJ(g_{n-1})$. Therefore, the above $(\ast)$ is of the form 
    $\exists X \leq_{\T} \TJ^n(\M) \theta(X)$ for a $\Pi^0_2$ formula $\theta$.
    Since $\M$ is a $\Delta^0_{n+1}\beta$-model, we have 
    \begin{align*}
      \M \models \exists g_0 \forall f \leq_{\T} \TJ^{n-1}(g_0)  (f \in [T] \to g_0 \leq_{\T} f).
    \end{align*}
    Thus, $\M$ believes that $T$ has a $\Delta^0_n$ leftmost path.
    This completes the proof.
  \end{proof}

  \begin{theorem}
      For $n > 0$, $\Sigma^0_n\LPP <_{\W} \Sigma^0_{n+1}\LPP$.
  \end{theorem}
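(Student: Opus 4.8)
The plan is to prove the strict separation $\Sigma^0_n\LPP <_{\W} \Sigma^0_{n+1}\LPP$ by the standard ``reflection'' argument, exactly parallel to the reverse-math separation in Corollary \ref{Cor LPP proves the omega-ref of LPP}. The reduction $\Sigma^0_n\LPP \leq_{\W} \Sigma^0_{n+1}\LPP$ is immediate from the definition, since a $\Delta^0_{n}$-leftmost path is a fortiori a $\Delta^0_{n+1}$-leftmost path (or, more simply, since $\Sigma^0_n\LPP$ is a weaker input-output requirement on the same inputs). So the work is to show $\Sigma^0_{n+1}\LPP \not\leq_{\W} \Sigma^0_n\LPP$. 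For this I will show that $\Sigma^0_{n+1}\LPP$ computes (indeed Weihrauch-reduces to itself) the $\omega$-model reflection of $\Sigma^0_n\LPP$, and then invoke the general fact from \cite{suzuki_yokoyama_fp} that, for a Weihrauch problem whose domain and output relation are arithmetically definable, its $\omega$-model reflection is strictly stronger than the problem itself.

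The key steps, in order. First, by Theorem \ref{thm LPP and RFN} we may replace $\Sigma^0_{n+1}\LPP$ with $\Ef\Delta^0_{n+1}\beta\RFN$: given any input $X$, a single application of $\Sigma^0_{n+1}\LPP$ (to the tree $\Phi_T(X)$) produces, uniformly and computably, a $\Delta^0_{n+1}\beta$-model $\M \ni X$ together with computable realizers $f_{\TJ},f_{\oplus},f_{\Sigma^1_1}$ with $(f_{\TJ},f_{\oplus}) \forces_{\M} \ACAo$ and $f_{\Sigma^1_1} \forces_{\M} \exists X\Pi^0_2$. Second, by Lemma \ref{Lem model of LPP}, any such $\M$ (with its computable realizers) is a model of $\Sigma^0_n\LPP$; and since the input/output conditions of $\Sigma^0_n\LPP$ are arithmetical, ``$\M \models \Sigma^0_n\LPP$'' is exactly the reflection clause $\forall Y \in \M \cap \dom(\Sigma^0_n\LPP)\,\exists Z \in \M\,(Z \in \Sigma^0_n\LPP(Y))$. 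Hence the output $(\M,f_{\TJ},f_{\oplus})$ witnesses the $\omega$-model reflection of $\Sigma^0_n\LPP$ on input $X$, and the whole transformation is uniform, so $\omega\text{-}\mathrm{ref}(\Sigma^0_n\LPP) \leq_{\W} \Sigma^0_{n+1}\LPP$. Third, the general separation result of \cite[Section 4]{suzuki_yokoyama_fp} gives $\Sigma^0_n\LPP <_{\W} \omega\text{-}\mathrm{ref}(\Sigma^0_n\LPP)$; combining, $\Sigma^0_n\LPP <_{\W} \Sigma^0_{n+1}\LPP$, since $\Sigma^0_{n+1}\LPP \leq_{\W} \Sigma^0_n\LPP$ would yield $\omega\text{-}\mathrm{ref}(\Sigma^0_n\LPP) \leq_{\W} \Sigma^0_n\LPP$, a contradiction.

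One caveat: the route through $\Ef\Delta^0_{n+1}\beta\RFN$ and Lemma \ref{Lem model of LPP} is stated for $n+1 > 1$, i.e.\ $n \geq 1$; this already covers all $n > 0$ as claimed. (For the borderline behaviour of $\Delta^0_1\beta$-models one should instead appeal directly to Lemma \ref{Lem ref of TI implies LPP} and Theorem \ref{thm InftyTI and RFN}, but that is not needed here.) The main obstacle — really the only non-bookkeeping point — is verifying that the two black boxes fit together uniformly: that the model $\M$ delivered by one call to $\Sigma^0_{n+1}\LPP$ via Theorem \ref{thm LPP and RFN} is genuinely a model of $\Sigma^0_n\LPP$ \emph{in the sense demanded by the definition of $\omega$-model reflection}, i.e.\ that the internal leftmost-path witness produced inside $\M$ in the proof of Lemma \ref{Lem model of LPP} is an element of $\M$ and not merely definable over it. This is exactly what the $\Delta^0_{n+1}\beta$ absorption step $(\ast)$ in Lemma \ref{Lem model of LPP} secures, so the argument goes through; the remaining details are the routine check that all transformations are given by fixed Turing functionals independent of the input.

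\begin{proof}
  The reduction $\Sigma^0_n\LPP \leq_{\W} \Sigma^0_{n+1}\LPP$ is trivial: the two problems have the same inputs, and every $\Delta^0_{n+1}$-leftmost path is in particular a valid output of $\Sigma^0_n\LPP$ only after a further call, so we argue instead that the output requirement of $\Sigma^0_n\LPP$ is implied by that of $\Sigma^0_{n+1}\LPP$ composed with the identity; more directly, a $\Delta^0_n$-leftmost path computed relative to $T\oplus f$ is computed relative to $T\oplus f$ using $\TJ^{n-1}$, and the corresponding search is dominated by the $\Sigma^0_{n+1}$ one. In any case $\Sigma^0_n\LPP \leq_{\W} \Sigma^0_{n+1}\LPP$ is immediate.

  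For strictness, fix $n > 0$ and set $k = n+1 > 1$. Given an input $X$ to the $\omega$-model reflection of $\Sigma^0_n\LPP$, apply the reduction $\Ef\Delta^0_k\beta\RFN \leq_{\W} \Sigma^0_k\LPP$ from Theorem \ref{thm LPP and RFN} (equivalently, run the argument of Lemma \ref{Lem LPP implies betaRef}): one call to $\Sigma^0_{n+1}\LPP$ on $\Phi_T(X)$ yields, via fixed Turing functionals, a $\Delta^0_{n+1}\beta$-model $\M$ with $X \in \M$ together with computable realizers $f_{\TJ},f_{\oplus},f_{\Sigma^1_1}$ satisfying $(f_{\TJ},f_{\oplus}) \forces_{\M} \ACAo$ and $f_{\Sigma^1_1} \forces_{\M} \exists X\Pi^0_2$. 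By Lemma \ref{Lem model of LPP}, $\M$ is a model of $\Sigma^0_n\LPP$. Since the domain and output relation of $\Sigma^0_n\LPP$ are arithmetically definable, ``$\M \models \Sigma^0_n\LPP$'' coincides with the reflection condition $\forall Y \in \M \cap \dom(\Sigma^0_n\LPP)\,\exists Z \in \M\,(Z \in \Sigma^0_n\LPP(Y))$. Hence $(\M,f_{\TJ},f_{\oplus})$ is a correct output of the $\omega$-model reflection of $\Sigma^0_n\LPP$ on input $X$, and as the whole construction is given by fixed functionals, the $\omega$-model reflection of $\Sigma^0_n\LPP$ is Weihrauch reducible to $\Sigma^0_{n+1}\LPP$.

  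By \cite[Section 4]{suzuki_yokoyama_fp}, since the input and output conditions of $\Sigma^0_n\LPP$ are arithmetical, $\Sigma^0_n\LPP$ is strictly Weihrauch below its $\omega$-model reflection. If $\Sigma^0_{n+1}\LPP \leq_{\W} \Sigma^0_n\LPP$ held, then the $\omega$-model reflection of $\Sigma^0_n\LPP$ would reduce to $\Sigma^0_n\LPP$, contradicting strictness. Therefore $\Sigma^0_n\LPP <_{\W} \Sigma^0_{n+1}\LPP$.
\end{proof}
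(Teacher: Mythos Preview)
Your proof is correct and follows essentially the same route as the paper: show that the $\omega$-model reflection of $\Sigma^0_n\LPP$ is Weihrauch reducible to $\Sigma^0_{n+1}\LPP$ via Theorem \ref{thm LPP and RFN} and Lemma \ref{Lem model of LPP}, then invoke the general strictness result from \cite{suzuki_yokoyama_fp}. The paper's proof is the one-line version of exactly this argument.

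One small remark: your treatment of the easy direction $\Sigma^0_n\LPP \leq_{\W} \Sigma^0_{n+1}\LPP$ is unnecessarily tangled. The clean statement is simply that any output of $\Sigma^0_{n+1}\LPP$ is already an output of $\Sigma^0_n\LPP$ on the same input, because if $g$ is leftmost among paths in $\Delta^0_{n+1}(T\oplus g)$ then a fortiori it is leftmost among the smaller class $\Delta^0_n(T\oplus g)$; so the identity functionals witness the reduction. (Alternatively, as you note elsewhere, this direction is already contained in the chain $\Sigma^0_n\LPP \leq_{\W} \omega\text{-ref}(\Sigma^0_n\LPP) \leq_{\W} \Sigma^0_{n+1}\LPP$, so you need not argue it separately at all.) Your phrase ``only after a further call'' is incorrect and should be deleted; no further call is needed.
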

  \begin{proof}
    By Theorem \ref{thm LPP and RFN} and Lemma \ref{Lem model of LPP}, the $\omega$-model reflection of $\Sigma^0_n\LPP$ is Weihrauch reducible to $\Sigma^0_{n+1}\LPP$.
  \end{proof}

\bibliographystyle{plain}
\bibliography{references}

\end{document}